\theoremstyle{definition}
\newtheorem{defn}{Definition}[section]
\theoremstyle{plain}
\newtheorem{thm}{Theorem}[section]
\newtheorem{prop}[thm]{Proposition}
\newtheorem{lem}[thm]{Lemma}
\newtheorem{cor}[thm]{Corollary}
\newtheorem*{maintheorem}{Main Theorem}
\newtheorem*{clm}{Claim}
\theoremstyle{remark}
\newtheorem*{rmk}{Remark}
\newcommand{\abs}[1] {\left|{#1}\right|}  %absolute value
\newcommand{\norm}[1] {\left\|{#1}\right\|} %norm
\newcommand{\brac}[1] {\left({#1}\right)}  %round brackets
\newcommand{\Brac}[1] {\left\{\,{#1}\,\right\}} %curly brackets
\newcommand{\pd}[2] {\dfrac{\partial {#1}}{\partial {#2}}} %first order partial derivative
\newcommand{\mbbc}{\mathbb{C}}
\newcommand{\mbbd}{\mathbb{D}}
\newcommand{\mbbn}{\mathbb{N}}
\newcommand{\mbbr}{\mathbb{R}}
\newcommand{\mbfr}{\mathbf{r}}
\newcommand{\mclj}{\mathcal{J}}
\newcommand{\mclm}{\mathcal{M}}
\newcommand{\mclp}{\mathcal{P}}
\newcommand{\mclq}{\mathcal{Q}}
\newcommand{\mclr}{\mathcal{R}}
\newcommand{\mclt}{\mathcal{T}}
\newcommand{\mscf}{\mathscr{F}}
\newcommand{\hI}{\widehat{I}}
\newcommand{\hX}{\widehat{X}}
\newcommand{\hY}{\widehat{Y}}
\newcommand{\hZ}{\widehat{Z}}
\DeclareMathOperator{\dif}{d}
\DeclareMathOperator{\id}{id}
\DeclareMathOperator{\dom}{dom}
\DeclareMathOperator{\Leb}{Leb}
\title{Analytic skew-products of quadratic polynomials over Misiurewicz-Thurston maps}
\author{Rui Gao}
\address{Rui Gao}
\email{gaorui12@nus.edu.sg}
\author{Weixiao Shen}
\address{Weixiao Shen}
\email{matsw@nus.edu.sg}
\begin{document}
\maketitle
%\tableofcontents

\begin{abstract}
We consider skew-products of quadratic maps over certain Misiurewicz-Thurston maps and study their statistical properties. We prove that, when the coupling function is a polynomial of odd degree, such a system admits two positive Lyapunov exponents almost everywhere and a unique absolutely continuous invariant probability measure.
\end{abstract}

\section{Introduction}
A quadratic polynomial $Q_c(x)=c-x^2$($1<c \le 2$) induces a unimodal map on the interval $[c-c^2, c]$.
If the critical point $0$ is strictly pre-periodic under iteration of $Q_c$, then we say that $Q_c$ is {\em Misiurewicz-Thurston}.
These maps are the simplest non-uniformly expanding dynamical systems, and their properties are well understood. See for example~\cite{N}. When considered as a holomorphic map defined on the Riemann Sphere, the complex dynamics generated by $Q_c$ is also exhaustedly studied. In this article, we shall make use of its subhyperbolicity as was studied in, for example, \cite[\S V.4]{CG} or \cite[\S 19]{M} .
% see \cite{CG} especially Chapters \uppercase\expandafter{\romannumeral 5} and \uppercase\expandafter{\romannumeral 8}.

In Viana \cite{V}, Misiurewicz-Thurston quadratic polynomials were used to construct non-uniformly expanding maps in dimension greater than one. He considered the following skew-product
$$G: \left(\mathbb{R}/\mathbb{Z}\right)\times \mbbr \circlearrowleft,\quad (\theta,y)\mapsto ( d \cdot\theta, Q_c(y)+\alpha\sin(2\pi \theta)),$$
where $d\ge 16$ is an integer, and $Q_c(1<c<2)$ is Misiurewicz-Thurston. He proved that $G$ has two positive Lyapunov exponents almost everywhere, provided that $\alpha>0$ is small enough. The assumption that $d\ge 16$ was weakened to $d\ge 2$ in \cite{BST}. See also~\cite{S1} for a similar result in non-integer case of $d$.

In Schnellmann ~\cite{S2}, the following skew product was considered:
\begin{equation}\label{eqn:mathscrF}
\mathscr{F}: [a-a^2,a]\times\mbbr \circlearrowleft\quad, \quad (x,y)\mapsto (g(x),Q_b(y)+\alpha \varphi(x)),
\end{equation}
where $g=Q_a^{m_1}$, $Q_a(1<a\le 2)$ and $Q_b(1<b<2)$ are Misiurewicz-Thurston, and $m_1$ is a large positive integer.
For certain coupling function $\varphi$, with singularity and depending on $a$, the author proved that $\mathscr{F}$ has two positive Lyapunov exponents almost everywhere, provided that $\alpha>0$ is small enough.

In this paper, we shall also consider systems in the form of (\ref{eqn:mathscrF}), but the coupling function $\varphi$ will be taken to be a nonconstant polynomial independent of $a$. The main result is the following:

\begin{maintheorem}\label{main}
Let $Q_a(1<a\le 2)$ and $Q_b(1<b<2)$ be Misiurewicz-Thurston and let $\varphi$ be a polynomial of odd degree. Assume also that $Q_a$ is topologically exact on $[a-a^2,a]$. Then there exists a positive integer $m_0=m_0(a)$ such that for each positive integer $m_1\ge m_0$ the following holds: For any $\alpha>0$ sufficiently small, the map $\mathscr{F}$ defined in (\ref{eqn:mathscrF})
has two positive Lyapunov exponents. Moreover, $\mathscr{F}$ has a unique invariant probability measure that is absolutely continuous with respect to the Lebesgue measure.
\end{maintheorem}

\begin{rmk}
The assumption that $\varphi$ is of odd degree seems quite artificial. Unfortunately, in our argument, this assumption cannot be fully gotten rid of. See the proof of the claim in Lemma \ref{lem:mathcalT} and the remark after that lemma for details. %Nevertheless, several evidences suggest that the Main Theorem for general non-constant polynomial $\varphi$. For example, if $\varphi'(0)\ne 0$, we can choose $m_1$ large accordingly, so that Lemma \ref{lem:mathcalT} can be proved easily under this setting, and hence the Main Theorem follows.
\end{rmk}

We shall use many ideas appearing in the works cited above. In particular,  we choose $m_0$ large enough, so that the map $\mathscr{F}$ is {\em partially hyperbolic}, i.e. the expansion along the $x$-direction is stronger than the $y$-direction. The exact choice of $m_0$ will be specified in (\ref{eqn:m_0}), at the end of \S\ref{subsec:subhyperbolicity}.

To better visualize the partial hyperbolicity, we shall introduce a conjugation map $u: [a-a^2,a]\to I_a$ in \S\ref{subsec:EC}, so that the conjugated map $h_0=u\circ Q_a\circ u^{-1}:I_a\circlearrowleft$ becomes uniformly expanding. Our construction of $u$ is based on subhyperbolicity of $Q_a$ near its Julia set. A similar construction, using instead the absolutely continuous invariant density, was used in \cite{S2}. We shall show that $u^{-1}$ and inverse branches of $h_0$ have very nice analytic properties, see Lemma \ref{lem:expandingcoor}. Then the map $\mathscr{F}$ defined in (\ref{eqn:mathscrF}) can be conjugate to the following map
$F$ via $u\times\id_{\mbbr}$:
\begin{equation}\label{eqn:F}
F:I_a\times\mbbr\circlearrowleft\quad,\quad (\theta,y)\mapsto (h(\theta),Q_b(y)+\alpha \phi(\theta)),
\end{equation}
where $h=h_0^{m_1}$ and $\phi=\varphi\circ u^{-1}$. We shall prove the equivalent statement of the Main Theorem for $F$ instead of $\mscf$. An important advantage of this construction is that in the $\theta$-coordinate, the derivative of an admissible curve, i.e., the image of a horizontal curve under iteration of $F$, can be approximated by an analytic function $\alpha T$, for some $T$ chosen from a certain compact space of holomorphic maps.

The basic strategy to prove our Main Theorem for the system $F$ is the same as that in previous works \cite{V,BST,S2}. The main effort is to control recurrence of typical orbits to the critical line $y=0$, see Proposition~\ref{prop:slowrec}. This proposition, together with the ``Building Expansion'' Lemmas in \cite{V}, which are summarized in Lemma \ref{lem:BE}, implies that $F$ is non-uniformly expanding in the sense of ~\cite{ABV}, and hence the results proved therein complete the proof of our Main Theorem.

Following Viana's original argument, Proposition~\ref{prop:slowrec} is reduced to showing that under iteration of $F$, a horizontal curve becomes {\em non-flat} and widely spreads in the $y$-direction. In our argument, these two properties are obtained in Proposition~\ref{prop:admissible curve} and Lemma \ref{lem:separate} respectively.
%These two results are core in our argument and their proofs are based on Lemma \ref{lem:expandingcoor} and
Lemma \ref{lem:mathcalT}, serving as an intermediate step,
is proved
%in both of whose proofs complex analytic techniques play an important role. To prove Lemma \ref{lem:expandingcoor}, we take advantage %of the subhyperbolicity of $Q_a$ near its Julia set, and to prove Lemma \ref{lem:mathcalT}, we
by combining subhyperbolicity of $Q_a$ with the super attracting behavior of $Q_a$ near infinity.

It remains an interesting open problem whether the statement of our Main Theorem holds for $(a, b)$ chosen from a positive subset of $[1,2]\times [1,2)$.

\section{Preliminaries}\label{sec:preliminaries}

\subsection{Subhyperbolicity of $Q_a$}\label{subsec:subhyperbolicity}

Let us review some useful properties of the complex dynamics of the Misiurewicz-Thurston quadratic polynomial $Q_a$($1<a\le 2$), which are well discussed in many literatures, see for example \cite[\S V.3]{CG} or \cite[\S 19]{M}.

Denote the {\em post-critical} set of $Q_a$ by $PC_a$, i.e.
$$PC_a=\Brac{Q_a^n(0):n\ge 1}.$$
Denote the {\em Julia} set of $Q_a$ by $\mclj_a$. Since $Q_a$ is Misiurewicz-Thurston, its {\em Fatou} set $\overline{\mbbc}\setminus\mclj_a$ coincides with the attracting basin of infinity. Therefore, we can fix some $R=R(a)>0$, and denote $V_a=\{\,z\in\mbbc:|z|<R\,\}$, such that

\begin{equation*}\label{eqn:V_a}
   \mclj_a \subset V_a \quad\mbox{and}\quad Q_a^{-1}(\overline{V_a})\subset V_a.
\end{equation*}

An important feature of $Q_a$ as a Misiurewicz-Thurston map is the so-called  {\em subhyperbolicity}. That is to say, there exists a conformal metric $\rho_a(z)\abs{\dif z }$ around $\mclj_a$(with mild singularities at $PC_a$), such that with respect to this metric, $Q_a$ is uniformly expanding in some neighborhood of $\mclj_a$. We shall give an explicit construction of $\rho_a(z)\abs{\dif z }$ below for further usage. For this purpose, let us start with the following lemma. Because this lemma can be essentially found in \cite[\S V, Theorem 3.1]{CG} or \cite[\S 19]{M}, we merely provide an outline of its proof.

\begin{lem}\label{lem:cover}
For each Misiurewicz-Thurston $Q_a(1<a\le 2)$, there exists a branched covering  $\pi:\mbbd\to V_a$, where $\mbbd$ denotes the unit disk in $\mbbc$, satisfying the following properties:
\begin{itemize}
  \item [(i)] The collection of critical points of $\pi$ is precisely equal to $\pi^{-1}(PC_a)$, and for each $\hat{c}\in\pi^{-1}(PC_a)$, $\pi''(\hat{c})\ne 0$, i.e the \emph{local degree} of $\pi$ is constantly equal to $2$ on $\pi^{-1}(PC_a)$.
  \item [(ii)] $\pi$ is {\em normal}. That is to say, $\pi|_{\mbbd\setminus\pi^{-1}(PC_a)}$ is normal as an unbranched covering, i.e. for each $p\in V_a\setminus PC_a$, the covering transformation group of $\pi|_{\mbbd\setminus\pi^{-1}(PC_a)}$ acts transitively on $\pi^{-1}(p)$.
  %or equivalently, the associated unbranched covering $\pi:\mbbd\setminus\pi^{-1}(PC_a)\to V_a\setminus PC_a$ is \emph{normal}, i.e. for each $q\in \mbbd\setminus\pi^{-1}(PC_a)$, $\pi_1(\mbbd\setminus\pi^{-1}(PC_a),q)$ is a normal subgroup of $\pi_1(V_a\setminus PC_a,\pi(q))$.
  \item [(iii)] For each pair of points $(\hat{p},\hat{q})$ in $\mbbd$ with $Q_a(\pi(\hat{p}))=\pi(\hat{q})$, $Q_a^{-1}$ can be lifted to a single-valued holomorphic map  $\hat{\tau}:\mbbd\circlearrowleft$, which maps $\hat{q}$ to $\hat{p}$. Moreover, $\hat{c}\in\mbbd$ is a critical point of $\hat{\tau}$ precisely if $\pi(\hat{c})\in PC_a\setminus\Brac{a}$  and $\pi(\hat{\tau}(\hat{c}))\notin PC_a$. Besides, once $\hat{\tau}'(\hat{c})= 0$, then $\hat{\tau}''(\hat{c})\ne 0$.
\end{itemize}
\end{lem}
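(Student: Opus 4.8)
The plan is to realize $\pi$ as the orbifold universal covering of a hyperbolic Riemann orbifold canonically attached to $Q_a$ — the same object that underlies the construction of the subhyperbolic metric in \cite[\S V.3]{CG} and \cite[\S 19]{M} — and then to read off (i)--(iii) from the standard theory of orbifold coverings.

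First I would record the elementary inputs. Since $Q_a$ is Misiurewicz--Thurston, $PC_a$ is finite, is contained in $\mclj_a=Q_a^{-1}(\mclj_a)\subset Q_a^{-1}(V_a)\subset V_a$, and avoids $0$ (because $0$ is strictly pre-periodic). Hence $Q_a^{-1}(PC_a)$ is again a finite subset of $V_a$; it contains $PC_a$ (as $Q_a(PC_a)\subset PC_a$) but not $0$, and $Q_a\colon Q_a^{-1}(V_a)\to V_a$ is a proper degree-two branched covering whose only critical point is $0$, lying over the critical value $a\in PC_a$. Next, equip $V_a$ with the orbifold structure $\mclo_a=(V_a,\nu)$, with $\nu\equiv 2$ on $PC_a$ and $\nu\equiv 1$ elsewhere. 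One checks that $\mclo_a$ is of hyperbolic type, so its orbifold universal covering is the unit disc; let $\pi\colon\mbbd\to V_a$ be the covering map. By construction $\pi$ is holomorphic, unramified off $\pi^{-1}(PC_a)$, and of local degree exactly $2$ at each point of $\pi^{-1}(PC_a)$; for a holomorphic map this is exactly the assertion that its critical set equals $\pi^{-1}(PC_a)$ and that $\pi''\ne 0$ there, i.e.\ (i). Restricting over $V_a\setminus PC_a$, the covering $\pi|_{\mbbd\setminus\pi^{-1}(PC_a)}$ has deck group $\pi_1^{\mathrm{orb}}(\mclo_a)$, the quotient of $\pi_1(V_a\setminus PC_a)$ by the normal closure of the squares of the loops around the points of $PC_a$; being associated with a normal subgroup it is a regular covering, which is (ii).

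For (iii), fix $(\hat p,\hat q)$ with $Q_a(\pi(\hat p))=\pi(\hat q)$, put $X:=\pi^{-1}(Q_a^{-1}(V_a))\subset\mbbd$, and consider $\Pi:=Q_a\circ\pi\colon X\to V_a$. Using $\deg_{\hat z}\Pi=\deg_{\hat z}\pi\cdot\deg_{\pi(\hat z)}Q_a$ together with the facts that $\pi$ ramifies exactly over $PC_a$ while $Q_a$ ramifies only at $0\notin PC_a$, one sees that $\Pi$ is an orbifold covering of $\mclo_a$, for the orbifold structure on $X$ that assigns order $2$ to the points with $\pi$-image in $Q_a^{-1}(PC_a)\setminus(PC_a\cup\{0\})$ and order $1$ to all others. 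Since $\pi$ is the \emph{universal} orbifold covering of $\mclo_a$, it lifts through $\Pi$: there is a holomorphic $\hat\tau\colon\mbbd\to X\subseteq\mbbd$ with $\Pi\circ\hat\tau=\pi$, equivalently $Q_a\circ\pi\circ\hat\tau=\pi$, and the basepoint can be prescribed to be $\hat\tau(\hat q)=\hat p$ (legitimate because $\hat p\in X$, $\Pi(\hat p)=\pi(\hat q)$, and $\mbbd$ is simply connected); this is the desired single-valued lift of $Q_a^{-1}$. Differentiating $\Pi\circ\hat\tau=\pi$ gives $\deg_{\hat c}\hat\tau\cdot\deg_{\hat\tau(\hat c)}\Pi=\deg_{\hat c}\pi$ at every $\hat c$; since $\deg_{\hat c}\pi$ and $\deg_{\hat\tau(\hat c)}\Pi$ each lie in $\{1,2\}$, one reads off that $\hat\tau$ is critical at $\hat c$ exactly when $\deg_{\hat c}\pi=2$ and $\deg_{\hat\tau(\hat c)}\Pi=1$, and then the local degree is exactly $2$, so $\hat\tau''(\hat c)\ne 0$. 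Translating back, $\deg_{\hat c}\pi=2$ means $\pi(\hat c)\in PC_a$ while $\deg_{\hat\tau(\hat c)}\Pi=1$ means $\pi(\hat\tau(\hat c))\notin PC_a\cup\{0\}$; and since $Q_a(\pi(\hat\tau(\hat c)))=\pi(\hat c)$ and $Q_a^{-1}(a)=\{0\}$, the condition $\pi(\hat\tau(\hat c))=0$ is equivalent to $\pi(\hat c)=a$, so the two conditions together collapse to $\pi(\hat c)\in PC_a\setminus\{a\}$ and $\pi(\hat\tau(\hat c))\notin PC_a$, which is (iii).

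The bulk of this is routine orbifold-covering bookkeeping; I would slow down only at the last step. The main obstacle is to pin down the ramification locus of $\hat\tau$ exactly and to see that the apparently ad hoc exceptional value $a$ is forced by the dynamical fact $Q_a^{-1}(a)=\{0\}$ — i.e.\ that $0$ is the critical point of $Q_a$ — while the other delicate point is arranging the orbifold structures on $X$ and on $\mclo_a$ so that $\Pi$ genuinely is an orbifold covering.
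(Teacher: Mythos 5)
Your proposal is correct and follows essentially the same route as the paper: the covering the paper builds by hand (the cover of $V_a\setminus PC_a$ associated with the normal subgroup generated by squares of peripheral loops, with the punctures filled in) is exactly the orbifold universal covering you invoke, and (i)--(ii) are obtained the same way. For (iii) you repackage the paper's ``local lifts of $Q_a^{-1}$ plus monodromy theorem'' as ``make $Q_a\circ\pi$ an orbifold covering and apply the universal property of $\pi$,'' which is the same lifting fact, and your local-degree bookkeeping in $Q_a\circ\pi\circ\hat{\tau}=\pi$ (using $Q_a^{-1}(a)=\{0\}$ and that all branch points of $\pi$ are simple) matches the paper's identification of the critical points of $\hat{\tau}$.
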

\begin{proof}[Sketch of Proof]
Fix $q\in V_a\setminus PC_a$ as a base point. For each $v\in PC_a$, let $l_v\in\pi_1(V_a\setminus PC_a,q)$ be represented by a simple closed curve based on $q$ whose winding number around $\nu$ is one and whose  winding number around $\nu'$ is $0$ for each $\nu'\in PC_a\setminus\{v\}$. Then $\pi_1(V_a\setminus PC_a,q)$ is the free group generated by $\Brac{l_v}_{v\in PC_a}$. Let $G$ be its normal subgroup generated by $\{l_v^2\}_{v\in PC_a}$. Then there exists a normal unbranched covering $\pi:\mbbd/G\to V_a\setminus PC_a$. For any small disk $B(v,r)$, $v\in PC_a$,  each connected component of $\pi^{-1}(B(v,r)\setminus\{v\})$ is a punctured disk, and the restriction of $\pi$ to it is a double covering. By filling up the punctured points into  $\mbbd/G$, it is easy to verify that $\pi$ can be extended to a branched covering from some simply connected Riemann surface to $V_a$. Since $V_a$ is hyperbolic, we obtain a branched covering $\pi:\mbbd\to V_a$. According to the construction of $\pi$, assertions (i) and (ii) hold automatically.

Because $Q_a$ has a unique critical point $0$ with $Q_a''(0)\ne 0$, the construction of $\pi$ exactly guarantees that $Q_a^{-1}$ can be locally lifted to a holomorphic map from $\mbbd$ to $\mbbd$ with respect to $\pi$. Since $\mbbd$ is simply connected, the global existence of the lift follows from monodromy theorem. It remains to check the statement about critical points of $\hat{\tau}$. Since $Q_a\circ\pi\circ\hat{\tau}=\pi$ and since $\pi'(\hat{c})=0$ implies that $\pi''(\hat{c})\ne 0$,  $\hat{c}$ is a critical point of $\hat{\tau}$ if and only if $\hat{c}$  is a critical point of $\pi$ and $\hat{\tau}(\hat{c})$ is not a critical point of $Q_a\circ \pi$. The conclusion follows.
\end{proof}

Let $\rho_\mbbd(w)|\dif w|$ be the standard Poincar\'{e} metric on $\mbbd$, where $\rho_\mbbd(w)=\frac{2}{1-|w|^2}$. By (i) and (ii) of Lemma \ref{lem:cover}, $\rho_\mbbd(w)|\dif w|$ can be pushed forward by $\pi$ to $V_a$, which induces a metric $\rho_a(z)\abs{\dif z}$ on $V_a$, i.e.

\begin{equation}\label{eqn:rho_a}
\rho_\mbbd(w)=\rho_a(\pi(w))|\pi'(w)|,\quad\forall w\in \mbbd\setminus\pi^{-1}(PC_a).
\end{equation}
Therefore, $\rho_a$ is is a strictly positive analytic function on $V_a\setminus PC_a$, and it has a singularity of order $\abs{z-v}^{-\frac{1}{2}}$ at $v$ for each $v\in PC_a$. It follows that the function $\frac{\rho_a(Q_a(z))}{\rho_a(z)}|(Q_a)'(z)|$ is actually continuous on $Q_a^{-1}(V_a)$. Moreover, by (iii) of Lemma \ref{lem:cover} and Schwarz lemma, it is strictly larger than $1$ on $Q_a^{-1}(V_a)$. In particular, since $Q_a^{-1}(\overline{V_a})$ is a compact subset of $V_a$,

\begin{equation}\label{eqn:lambda_a}
   \lambda_a:=\inf_{z\in Q_a^{-2}(V_a)}\frac{\rho_a(Q_a(z))}{\rho_a(z)}|(Q_a)'(z)|>1.
\end{equation}

Here is the right position to specify our choice of $m_0$ in the statement of the Main Theorem. By definition, $m_0$ is the minimal positive integer such that
\begin{equation}\label{eqn:m_0}
    \lambda_a^{m_0}> 4.
\end{equation}
By considering the derivative of $Q_a$ at its fixed point $\frac{\sqrt{1+4a}-1}{2}$, it follows that

$$\lambda_a\le \sqrt{1+4a}-1\le 2.$$
For some technical reason,
%(see (\ref{eqn:tau^(i)}) and (\ref{eqn:functionT})),
we shall also need a constant $\tilde{\lambda\,}_{\!a}\in(4^{\frac{1}{m_0}},\lambda_a)$. To be definite, let
$$\tilde{\lambda\,}_{\!a}:=\frac{\lambda_a+4^{\frac{1}{m_0}}}{2}.$$

\begin{rmk}
In fact our argument is valid for a smaller $m_0$. On the one hand, we can replace $\lambda_a$ with a larger number $\mu_a>1$, which only needs to satisfy that
$$\rho_a(Q_a^n(x))|(Q_a^n)'(x)|\ge C_a\mu_a^n\cdot \rho_a(x),\quad \forall x\in[a-a^2,a], \forall n\ge 1,$$
for some $C_a>0$. On the other hand, the upper bound $4$ of $|Q_b'|$ can be replaced by some $1<R_b<2$, which only needs to satisfy that
 $$  |\brac{Q_b^n}'(y)|\le C_bR_b^n\,,\quad \forall y\in \hI_b, \forall n\ge 0,$$
 for some $C_b>0$. See \cite[Lemma 3.1]{BST} for the existence of $R_b$. Then to guarantee that our argument works, $m_0$ only needs to satisfy that
 $$\mu_a^{m_0}>R_b,$$
 but several details will be more redundant. For simplification, let us work with the assumption $ \lambda_a^{m_0}> 4$.
\end{rmk}

\subsection{Expanding coordinates}\label{subsec:EC}

Define

$$u=u_a:[a-a^2,a]\to I_a,\quad x\mapsto \int_0^x\rho_a(t)\dif t\,,$$
where
$$I_a:=u([a-a^2,a]).$$
Let $h_0$ be the conjugated map of $Q_a:[a-a^2,a]\circlearrowleft$ via $u$, i.e.
$$h_0:I_a\to I_a, \quad h_0=u\circ Q_a\circ u^{-1}.$$
Note that $u(0)=0$ is the unique turning point of $h_0$.  By definition,
\begin{equation}\label{eqn:h_0'}
  |h_0'(u(x))|=\frac{\rho_a(Q_ax)}{\rho_a(x)}|Q_a'(x)|\ge\lambda_a, \quad\forall x\in [a-a^2,a].
\end{equation}

For every $n\ge 0$, let

$$\mathcal{Q}_n:=\big\{\omega\subset I_a: \omega \mbox{ is a connected component of }I_a\setminus h_0^{-n}(u(PC_a))\big\}.$$
Then $\Brac{\mathcal{Q}_n}_{n\ge 0}$ is a sequence of nested Markov partitions of $h_0$. To piecewise analytically continue $h_0^{-1}$ on each $\omega\in\mclq_0$, let us consider the following composition of maps

$$\hat{u}:\pi^{-1}([a-a^2,a])\to I_a, \quad \hat{p}\mapsto u(\pi(\hat{p})).$$

Given a pair $(\omega_0,\omega_1)\in\mclq_0\times\mclq_1$ with $h_0(\omega_1)=\omega_0$, denote $\tau=(h_0|_{\omega_1})^{-1}$, $J_i=u^{-1}(\omega_i)$, $i=0,1$, and let $\sigma:J_0\to J_1$ be the corresponding branch of $Q_a^{-1}$. For $i=0,1$, let $\hat{\omega}_i$  be an arbitrary connected component of $\pi^{-1}(J_i)$, and let $\hat{\tau}:\hat{\omega}_0\to\hat{\omega}_1$ be the lift of $\sigma$ via $\pi$. Then we have the following commutative diagram.

\begin{equation}\label{eqn:diagram}
   \begin{CD}
\hat{\omega}_0 @>\hat{\tau}>> \hat{\omega}_1 \\
@V\pi VV  @VV \pi V          \\
J_0 @>\sigma>> J_1            \\
@V u VV  @VV u V          \\
\omega_0 @>\tau >> \omega_1
\end{CD}
\end{equation}

\begin{lem}\label{lem:continuation}
For the notations introduced above, the following statements hold.

\begin{enumerate}
  \item [(i)] $\hat{u}$ can be analytically continued to a univalent holomorphic function on some open neighborhood of $\overline{\hat{\omega}_0}$ in $\mbbd$.
  \item [(ii)] $\tau$ can be analytically continued to a holomorphic function on some open neighborhood of $\overline{\omega_0}$ in $\mbbc$. Moreover, $c\in \overline{\omega_0}$ is a critical point of $\tau$ precisely if $c\in \partial\omega_0$, $c\ne u(a)$ and $\tau (c)\notin u(PC_a)$. Besides, once $\tau'(c)=0$, then $\tau''(c)\ne 0$.
\end{enumerate}
\end{lem}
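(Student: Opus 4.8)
The plan is to prove (i) first and then deduce (ii) from (i) together with Lemma~\ref{lem:cover}(iii) by chasing the diagram (\ref{eqn:diagram}). At the outset I would record the basic geometry. Since $PC_a\subset[a-a^2,a]$ with $a=Q_a(0)$ and $a-a^2=Q_a^2(0)$ both in $PC_a$, the set $J_0=u^{-1}(\omega_0)$ is a connected component of $[a-a^2,a]\setminus PC_a$: an open interval whose endpoints lie in $PC_a$ and whose interior avoids $PC_a$. As the critical values of $\pi$ are exactly $PC_a$ (Lemma~\ref{lem:cover}(i)), $\pi\colon\pi^{-1}(J_0)\to J_0$ is an unbranched covering of a simply connected arc, so $\pi$ restricts to a homeomorphism of $\hat{\omega}_0$ onto $J_0$, and $\hat{u}=u\circ\pi$ to a homeomorphism of $\hat{\omega}_0$ onto $\omega_0$. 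I would next check that $\overline{\hat{\omega}_0}$ is a compact arc in $\mbbd$ with endpoints in $\pi^{-1}(PC_a)$ onto which $\pi$ and $\hat{u}$ extend homeomorphically (images $\overline{J_0}$, $\overline{\omega_0}$): $\overline{J_0}$ touches $PC_a$ only at its endpoints and the singularities of $\rho_a$ there have order $-\tfrac12$, hence are integrable, so $\overline{J_0}$ has finite $\rho_a$-length; as $\pi$ is a local isometry for the Poincar\'e metric, the lift $\overline{\hat{\omega}_0}$ has finite $\rho_\mbbd$-length and so, by completeness, is compact, and the standard covering-space argument shows the lift of the closed arc is again a closed arc.

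For (i) the main work is to extend $\hat{u}$ to a local biholomorphism near each point $\hat{p}\in\overline{\hat{\omega}_0}$. If $\pi(\hat{p})\notin PC_a$, then $\pi(\hat{p})\in J_0$, where $\rho_a$ is positive and real-analytic, so $u$ extends to a holomorphic $\tilde{u}$ with $\tilde{u}'=\rho_a\neq0$ near $\pi(\hat{p})$, and $\hat{u}=\tilde{u}\circ\pi$ extends near $\hat{p}$ since $\pi$ is a local biholomorphism there. The essential case is $\pi(\hat{p})=v\in PC_a$, when $\hat{p}=\hat{c}$ is a critical point of $\pi$, an endpoint of $\overline{\hat{\omega}_0}$; here the orders match. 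By Lemma~\ref{lem:cover}(i) one can write $\pi(w)-v=\zeta(w)^2$ for a biholomorphism $\zeta$ near $\hat{c}$ with $\zeta(\hat{c})=0$, and since $\rho_a$ has a singularity of order exactly $|z-v|^{-1/2}$ at $v$, the substitution $t-v=\xi^2$ in $u(x)-u(v)=\int_v^x\rho_a(t)\,\dif t$ gives, on the side of $v$ occupied by $J_0$, an identity $u(x)-u(v)=\Phi(\sqrt{x-v})$ with $\Phi$ holomorphic near $0$ and $\Phi'(0)\neq0$. Picking the sign of $\zeta$ so that $\zeta(w)$ realizes the branch of $\sqrt{\pi(w)-v}$ compatible with this along $\hat{\omega}_0$, one gets $\hat{u}(w)=u(v)+\Phi(\zeta(w))$ near $\hat{c}$, again a local biholomorphism. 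These local extensions agree on overlaps (they coincide on the real-analytic arc $\hat{\omega}_0$), so covering the compact arc $\overline{\hat{\omega}_0}$ by finitely many of them produces a holomorphic extension of $\hat{u}$ to a neighborhood of $\overline{\hat{\omega}_0}$ that is a local biholomorphism at every point of $\overline{\hat{\omega}_0}$; since $\hat{u}$ is also globally injective on $\overline{\hat{\omega}_0}$, a routine compactness argument then gives univalence on a neighborhood of $\overline{\hat{\omega}_0}$.

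To deduce (ii): composing the vertical arrows in (\ref{eqn:diagram}) yields $\hat{u}\circ\hat{\tau}=\tau\circ\hat{u}$ on $\hat{\omega}_0$. By Lemma~\ref{lem:cover}(iii), $\hat{\tau}$ is the restriction of a global holomorphic self-map of $\mbbd$, and by (i) $\hat{u}$ is univalent holomorphic near $\overline{\hat{\omega}_0}$ with holomorphic inverse $\hat{u}^{-1}$ near $\overline{\omega_0}$; hence $\tau=\hat{u}\circ\hat{\tau}\circ\hat{u}^{-1}$ extends holomorphically to a neighborhood of $\overline{\omega_0}$. Since $\hat{u}$ and $\hat{u}^{-1}$ are local biholomorphisms, for $c\in\overline{\omega_0}$ and $\hat{c}:=\hat{u}^{-1}(c)$ we have $\tau'(c)=0\iff\hat{\tau}'(\hat{c})=0$, with $\tau'$ vanishing at $c$ to the same order as $\hat{\tau}'$ at $\hat{c}$; so $\tau''(c)\neq0$ whenever $\tau'(c)=0$, by the corresponding assertion in Lemma~\ref{lem:cover}(iii). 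It remains to match the conditions: as $\hat{c}\in\overline{\hat{\omega}_0}$ and $J_0$ is a component of $[a-a^2,a]\setminus PC_a$, $\pi(\hat{c})\in PC_a\iff\pi(\hat{c})\in\partial J_0\iff c\in\partial\omega_0$, and $\pi(\hat{c})\neq a\iff c\neq u(a)$; while $u(\pi(\hat{\tau}(\hat{c})))=\hat{u}(\hat{\tau}(\hat{c}))=\tau(c)$ gives $\pi(\hat{\tau}(\hat{c}))=u^{-1}(\tau(c))$, so $\pi(\hat{\tau}(\hat{c}))\notin PC_a\iff\tau(c)\notin u(PC_a)$. This reproduces exactly the characterization in (ii).

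The hard part will be the branch-point case in the proof of (i): that is where one genuinely uses that the singularity exponent of $\rho_a$ equals $-\tfrac12$ and the local degree of $\pi$ along $\pi^{-1}(PC_a)$ equals $2$, so that the square-root behavior of $u$ at $v\in PC_a$ exactly cancels the double branching of $\pi$; the care lies in verifying that the correct branch of $\sqrt{\pi(w)-v}$ is chosen on each side of $v$, so that the holomorphic germ $u(v)+\Phi(\zeta(\cdot))$ really continues $\hat{u}$ rather than some other branch. The remaining pieces — the compactness and arc structure of $\overline{\hat{\omega}_0}$, the gluing via the identity theorem, the local-biholomorphism-plus-injectivity argument for univalence, and the bookkeeping for (ii) — should all be routine.
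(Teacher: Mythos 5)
Your proposal is correct and follows essentially the same route as the paper: assertion (i) from the cancellation between the order-$\tfrac12$ singularity of $\rho_a$ (hence of $u$) at the endpoints of $J_0$ and the local degree $2$ of $\pi$ at $\pi^{-1}(PC_a)$, and assertion (ii) from the identity $\tau=\hat{u}\circ\hat{\tau}\circ\hat{u}^{-1}$ combined with Lemma \ref{lem:cover}(iii). The paper leaves these steps as ``easy to see''; you have simply supplied the local square-root computation, the compactness of $\overline{\hat{\omega}_0}$, and the bookkeeping that the paper omits.
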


\begin{proof}
Since $u'=\rho_a$, by (\ref{eqn:rho_a}), $u'>0$ on $J_0$, and $u$ is real analytic on $J_0$ with singular points of order $\frac{1}{2}$ at both end points. Due to (i) of Lemma \ref{lem:cover}, $\pi'\ne 0$ on $\hat{\omega}_0$ and $\pi$ has critical points of local degree $2$ at both end points of $\hat{\omega}_0$. Therefore, it is easy to see that assertion (i) holds.

Since $\tau=\hat{u}\circ\hat{\tau}\circ\hat{u}^{-1}$ on $\omega_0$, assertion (ii) follows immediately from (iii) of Lemma~\ref{lem:cover} and assertion (i).

\iffalse % details
According to (iii) of Lemma \ref{lem:cover}, $\hat{\tau}$ is actually holomorphic on $\mbbd$. Since $\tau=\hat{u}\circ\hat{\tau}\circ\hat{u}^{-1}$ on $\omega_0$, by assertion (i), it can be analytically continued to some open neighborhood of $\overline{\omega_0}$.  Again by (iii) of Lemma \ref{lem:cover}, $\hat{\tau}'\ne 0$ on $\hat{\omega}_0$, and for $\hat{c}\in\partial\hat{\omega}_0$, $\hat{\tau}'(\hat{c})=0$ if and only if $\pi(\hat{c})\ne a$ and $\pi(\hat{\tau}(\hat{c}))\notin PC_a$. Besides, $\hat{\tau}'(\hat{c})=0$ implies that $\hat{\tau}''(\hat{c})\ne 0$.  As a result, the conclusion follows from the identity $\tau=\hat{u}\circ\hat{\tau}\circ\hat{u}^{-1}$.
\fi
\end{proof}

Given a bounded interval $I\subset \mathbb{R}$ and $\xi>0$, let

$$B_\xi(I)=\{z\in\mathbb{C}: \textrm{dist}(z, I)<\xi\}.$$
The following two lemmas are the main results of this subsection, which motivate us to replace $g$ with $h=u\circ g\circ u^{-1}$ as our base dynamics for taking advantage of nice analytical properties of $h^{-1}$.
\begin{lem}\label{lem:expandingcoor}
There exist $\xi>0$ and $D_{\xi,i}>0$, $\forall i\ge 1$, such that the following statements hold.
\begin{enumerate}
\item[(i)] Given $\omega\in \mathcal{Q}_0$, $u^{-1}|_\omega$ can be analytically continued to a holomorphic function on $B_{\xi}(\omega)$.
\item[(ii)] Given $(\omega_0,\omega_1)\in\mclq_0\times\mclq_1$ with $h_0(\omega_1)=\omega_0$, $(h_0|_{\omega_1})^{-1}$ can be analytically continued to a holomorphic function $\tau$ on $B_\xi(\omega_0)$ and $|\tau'|\le \tilde{\lambda\,}_{\!a}^{-1}$ on $B_\xi(\omega_0)$. Moreover, for each $0<\xi'\le\xi$ and each interval $I\subset\omega_0$, $\tau(B_{\xi'}(I))\subset B_{\xi'}(\tau(I))$.
\item[(iii)]Given $n\ge 1$ and $(\omega_0,\omega_n)\in\mclq_0\times\mclq_n$ with $h_0^n(\omega_n)=\omega_0$\,, $(h_0^n|_{\omega_n})^{-1}$ can be analytically continued to a holomorphic function $\tau_n:B_{\xi}(\omega_0)\to B_{\xi}(\omega_n)$. Moreover,

\begin{equation}\label{eqn:tau^(i)}
    \sup_{z\in B_{\xi}(\omega_0)}|\tau_n^{(i)}(z)|\le D_{\xi,i}\tilde{\lambda\,}_{\!a}^{-n},\quad\forall i\ge 1.
\end{equation}

\end{enumerate}
\end{lem}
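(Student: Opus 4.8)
The plan is to bootstrap from Lemma \ref{lem:continuation}, first establishing the analytic continuations with a uniform width $\xi$ and then promoting the derivative bound on one step to all iterates by the chain rule. For part (i), Lemma \ref{lem:continuation}(i) already gives that $\hat u$ extends univalently past $\overline{\hat\omega_0}$; since $u^{-1}|_\omega = \pi\circ\hat u^{-1}$ restricted appropriately (or more directly, $u^{-1}$ extends because $u$ is a real-analytic diffeomorphism on the open interval $J_0^\circ$ with only the half-power singularities at the two endpoints, which after post-composition are absorbed), one gets a holomorphic extension of $u^{-1}|_\omega$ to some neighborhood of $\overline\omega$. The only subtlety is uniformity: $\mclq_0$ is a \emph{finite} partition (since $PC_a$ is finite and $h_0$ is piecewise monotone), so taking the minimum of the finitely many individual widths yields a single $\xi_0>0$ that works for every $\omega\in\mclq_0$ simultaneously. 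For part (ii), Lemma \ref{lem:continuation}(ii) gives the holomorphic extension of $\tau=(h_0|_{\omega_1})^{-1}$ to a neighborhood of $\overline{\omega_0}$; the estimate $|\tau'|\le \tilde\lambda_a^{-1}$ comes from (\ref{eqn:h_0'}) and (\ref{eqn:lambda_a}), which give $|h_0'|\ge\lambda_a$ on the real interval, hence $|\tau'|\le\lambda_a^{-1}$ there, and then by continuity of $\tau'$ one shrinks $\xi_0$ to a possibly smaller $\xi>0$ so that $|\tau'|\le\tilde\lambda_a^{-1}$ holds on all of $B_\xi(\omega_0)$ for each of the finitely many branches. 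The nesting property $\tau(B_{\xi'}(I))\subset B_{\xi'}(\tau(I))$ is then immediate from $|\tau'|\le\tilde\lambda_a^{-1}<1$: any point within distance $\xi'$ of $I$ is mapped within distance $\xi'\tilde\lambda_a^{-1}<\xi'$ of $\tau(I)$, using that $\tau$ is a contraction and $I\subset\omega_0$ so the relevant disk stays inside the domain $B_\xi(\omega_0)$.

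For part (iii), I would factor $(h_0^n|_{\omega_n})^{-1}$ as a composition $\tau_n=\tau^{(1)}\circ\tau^{(2)}\circ\cdots\circ\tau^{(n)}$ of single-step inverse branches, where $\tau^{(j)}$ maps (a neighborhood of) the $j$-th Markov piece in the orbit to the $(j{+}1)$-th. By the nesting property from (ii) applied repeatedly, $\tau^{(j)}\circ\cdots\circ\tau^{(n)}$ maps $B_\xi(\omega_0)$ into $B_\xi$ of the appropriate element of $\mclq_{n-j+1}$, so the composition is well-defined as a map $B_\xi(\omega_0)\to B_\xi(\omega_n)$. The first-derivative bound $|\tau_n'|\le\tilde\lambda_a^{-n}$ on $B_\xi(\omega_0)$ is then just the product of the $n$ bounds from (ii) via the chain rule. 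The higher-derivative bounds (\ref{eqn:tau^(i)}) follow from the first-derivative bound together with the Cauchy estimates: since $\tau_n$ is holomorphic on $B_\xi(\omega_0)$ and maps into a bounded region with $|\tau_n'|\le\tilde\lambda_a^{-n}$, one can estimate $|\tau_n^{(i)}(z)|$ for $z\in B_{\xi'}(\omega_0)$ with $\xi'<\xi$ by integrating $\tau_n'$ around a circle of radius $\xi-\xi'$, giving $|\tau_n^{(i)}(z)|\le (i-1)!\,(\xi-\xi')^{-(i-1)}\sup|\tau_n'|\le D_{\xi,i}\tilde\lambda_a^{-n}$ with $D_{\xi,i}$ depending only on $\xi$ and $i$ (one then renames $\xi'$ back to $\xi$ by having started with a slightly larger width). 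A cleaner alternative for the higher derivatives: apply Cauchy's estimate to the \emph{function} $\tau_n$ itself, whose sup norm on $B_\xi(\omega_0)$ is bounded by $\mathrm{diam}(I_a)+\xi$ uniformly in $n$, giving $|\tau_n^{(i)}|\le i!\,\xi^{-i}(\mathrm{diam}(I_a)+\xi)$; but this lacks the crucial $\tilde\lambda_a^{-n}$ decay, so the argument via the derivative bound on $\tau_n'$ is the one to use.

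The main obstacle I anticipate is \emph{not} any single analytic estimate but rather the bookkeeping needed to make $\xi$ genuinely uniform across all $n$ simultaneously in part (iii): one must verify that the \emph{same} $\xi$ chosen in parts (i)--(ii) (from finitely many branches) propagates through arbitrarily long compositions, and this is exactly what the invariance statement $\tau(B_{\xi'}(I))\subset B_{\xi'}(\tau(I))$ in (ii) is designed to supply — it is the inductive step that prevents the neighborhoods from having to shrink at each stage. A secondary point requiring care is that in the composition $\tau_n=\tau^{(1)}\circ\cdots\circ\tau^{(n)}$, each intermediate domain is a genuine $\xi$-neighborhood of a single element of the appropriate $\mclq_k$ (not a union), which is guaranteed because the $\mclq_k$ are nested Markov partitions: the image $\tau^{(n)}(\omega_0)$ lies in a single element of $\mclq_1$, and inductively $\tau^{(j)}\circ\cdots\circ\tau^{(n)}(\omega_0)$ lies in a single element of $\mclq_{n-j+1}$, so that at every stage (ii) applies to a single branch on a single Markov interval.
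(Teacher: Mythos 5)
Your proposal is correct and follows essentially the same route as the paper: parts (i)--(ii) are deduced from Lemma \ref{lem:continuation} with uniformity coming from the finiteness of $\mclq_0$ and $\mclq_0\times\mclq_1$, the nesting property comes from $|\tau'|\le\tilde{\lambda\,}_{\!a}^{-1}<1$ on the (convex) set $B_{\xi'}(I)$, and part (iii) is obtained by composing single-step branches via that nesting property, with the higher-derivative bounds (\ref{eqn:tau^(i)}) coming from Cauchy's estimate applied after passing to a slightly smaller width (the paper's choice $\xi=\min(\xi_1,\frac{\xi_2}{2})$, your ``rename $\xi'$ back to $\xi$''). No gaps worth noting.
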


\begin{proof}
Let $\hat{\omega}$ be an arbitrary connected component of $\hat{u}^{-1}(\omega)$. Then $u^{-1}=\pi\circ (\hat{u}|_{\hat{\omega}})^{-1}$ on $\omega$, and hence (i) of Lemma \ref{lem:continuation} implies that $u^{-1}$ can be analytically continued to a holomorphic function on some open neighborhood of $\overline{\omega}$ in $\mbbc$. Since $\mclq_0$ is a finite set, assertion (i) holds for some $\xi=\xi_1>0$.

On the one hand, according to (ii) of Lemma \ref{lem:continuation}, $(h_0|_{\omega_1})^{-1}$ can be extended to a holomorphic function $\tau$ on some open neighborhood of $\overline{\omega_0}$. On the other hand, (\ref{eqn:h_0'}) implies that $|\tau'|\le\lambda_a^{-1}<\tilde{\lambda\,}_{\!a}^{-1}$ on $\omega_0$.  Noting that $\mclq_0\times \mclq_1$ is a finite set, there exists $\xi_2>0$, independent of $(\omega_0,\omega_1)$, such that $\tau$ is holomorphic on $B_{\xi_2}(\omega_0)$ and $|\tau'|\le \tilde{\lambda\,}_{\!a}^{-1}$ on $B_{\xi_2}(\omega_0)$. For each $0<\xi'\le\xi_2$ and each interval $I\subset\omega_0$, $B_{\xi'}(I)$ is a convex subset of $B_{\xi_2}(\omega_0)$. It follows that $|\tau'|\le \tilde{\lambda\,}_{\!a}^{-1}<1$ on $B_{\xi'}(I)$, and consequently $\tau(B_{\xi'}(I))\subset B_{\xi'}(\tau(I))$. This completes the proof of (ii) for $\xi=\xi_2$.

Consider $(h_0^n|_{\omega_n})^{-1}$ as a composition of $n$ copies of $h_0^{-1}$ and apply (ii) repeatedly for $\xi=\xi_2$. Therefore, it can be extended to a holomorphic function $\tau_n:B_{\xi_2}(\omega_0)\to B_{\xi_2}(\omega_n)$ and $|\tau_n'|\le \tilde{\lambda\,}_{\!a}^{-n}$ on $B_{\xi_2}(\omega_0)$. Take $\xi=\min(\xi_1,\frac{\xi_2}{2})$, so that both (i) and (ii) hold simultaneously. (\ref{eqn:tau^(i)}) follows from Cauchy's estimate.
\end{proof}

\begin{lem}\label{lem:h-distortion}
 There exists $C_d>0$, such that for any $\omega_n\in \mathcal{Q}_n$($n\ge 1$) and any measurable set $E\subset \omega_n$\,, we have
\begin{equation}\label{eqn:h-distortion}
 C_d^{-1}\cdot |h_0^n(E)|^2 \le \frac{|E|}{|\omega_n|} \le C_d \cdot |h_0^n(E)|.
\end{equation}
\end{lem}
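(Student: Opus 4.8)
The plan is to rephrase \eqref{eqn:h-distortion} as a bounded-distortion estimate for the holomorphic inverse branch $\tau_n:=(h_0^n|_{\omega_n})^{-1}$ supplied by Lemma~\ref{lem:expandingcoor}(iii), and to exploit that the only failure of bounded distortion is a square-root degeneration of $h_0^n|_{\omega_n}$ at the endpoints of $\omega_n$. First the reduction: put $A:=h_0^n(E)\subset\omega_0:=h_0^n(\omega_n)\in\mathcal{Q}_0$, so $E=\tau_n(A)$, $\omega_n=\tau_n(\omega_0)$, $|E|=\int_A|\tau_n'|$ and $|\omega_n|=\int_{\omega_0}|\tau_n'|$. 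Since $\mathcal{Q}_0$ is a finite partition of the bounded interval $I_a$ whose partition points are $u(PC_a)$, there is $\delta_0>0$ with $\delta_0\le|\omega_0|\le|I_a|$ for all $\omega_0\in\mathcal{Q}_0$ and $|p-q|\ge\delta_0$ for distinct $p,q\in u(PC_a)$. Thus it suffices to find $C\ge1$, independent of $n,\omega_n,E$, with
\begin{equation*}
 C^{-1}|A|^2\ \le\ \Bigl(\int_A|\tau_n'|\Bigr)\Bigm/\Bigl(\int_{\omega_0}|\tau_n'|\Bigr)\ \le\ C|A| .
\end{equation*}

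Next I would pin down the shape of $\tau_n$ on $\omega_0$. Writing $\tau_n=g_n\circ\cdots\circ g_1$ as a composition of inverse branches of $h_0$, each extended over a $\mathcal{Q}_0$-element as in Lemma~\ref{lem:expandingcoor}(ii), Lemma~\ref{lem:continuation}(ii) says that each $g_j$ is a local diffeomorphism except for simple critical points lying over $u(PC_a)$. Since $h_0'$ never vanishes and $u(PC_a)$ is $h_0$-forward invariant and disjoint from the interiors of $\mathcal{Q}_0$-elements, one checks by following the reverse orbit that no $g_j$ is critical along the reverse orbit of an interior point of $\omega_0$, and that at most one $g_j$ is critical along the reverse orbit of an endpoint, and then exactly once and simply --- this being the square-root branching of $h_0$ built into Lemma~\ref{lem:cover}(i),(iii). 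Consequently $\tau_n|_{\omega_0}$ is a monotone homeomorphism onto $\omega_n$, real-analytic with non-vanishing derivative on the interior, and for any fixed $\xi'\in(0,\min(\xi,\delta_0))$ its only critical points in $B_{\xi'}(\omega_0)$ form a set $Z_n\subset\partial\omega_0$ of endpoints of $\omega_0$, each a simple zero of $\tau_n'$.

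The technical core is then the estimate: there is a uniform $\kappa\ge1$ such that, with $M_n:=\max_{\overline{\omega_0}}|\tau_n'|$ and $d_n(\theta):=\operatorname{dist}(\theta,Z_n)$ (set $d_n\equiv+\infty$ if $Z_n=\emptyset$),
\begin{equation*}
 \kappa^{-1}\,\min\!\bigl(d_n(\theta),|\omega_0|\bigr)\,\frac{M_n}{|\omega_0|}\ \le\ |\tau_n'(\theta)|\ \le\ M_n,\qquad\theta\in\omega_0 .
\end{equation*}
For $\theta$ at distance $\ge\xi''$ from $Z_n$ (some fixed $\xi''\le\xi'$) the ball $B_{\xi''}(\{\theta\})\subset B_{\xi'}(\omega_0)$ contains no critical point of $\tau_n$ and carries $|\tau_n'|\le\tilde\lambda_a^{-1}<1$, hence $\tau_n$ is a contraction and in particular univalent there, so Koebe's distortion theorem makes $|\tau_n'|$ vary by a universal factor on $B_{\xi''/2}(\{\theta\})$; chaining this along $\omega_0$ in $O(|I_a|/\xi'')$ steps gives $|\tau_n'|\asymp M_n$ on $\{d_n\ge\xi''\}$ (so $M_n$ is attained there and $\int_{\omega_0}|\tau_n'|\ge cM_n|\omega_0|$). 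Near a point $c\in Z_n$ one extracts the simple zero, $\tau_n(\theta)-\tau_n(c)=\tfrac12\tau_n''(c)\beta_n(\theta)^2$ with $\beta_n$ holomorphic, $\beta_n(c)=0$, $\beta_n'(c)=1$; using the derivative bounds of Lemma~\ref{lem:expandingcoor}(iii) and matching with the region at distance $\xi''$ from $c$ forces $|\tau_n''(c)|\asymp M_n/|\omega_0|$ and $\beta_n$ of bounded distortion on $B_{\xi''}(\{c\})$, whence $|\tau_n'(\theta)|=|\tau_n''(c)|\,|\beta_n(\theta)|\,|\beta_n'(\theta)|\asymp(M_n/|\omega_0|)\,|\theta-c|$ near $c$. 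Granting this, the upper inequality follows from $\int_A|\tau_n'|\le M_n|A|$ and $\int_{\omega_0}|\tau_n'|\ge cM_n|\omega_0|$; the lower inequality follows from $\int_A|\tau_n'|\ge\kappa^{-1}\frac{M_n}{|\omega_0|}\int_A\min(d_n(\theta),|\omega_0|)\,d\theta\ge\frac{\kappa^{-1}}{4}\frac{M_n}{|\omega_0|}|A|^2$ (the elementary inequality $\int_A\min(d_n(\theta),|\omega_0|)\,d\theta\ge\tfrac14|A|^2$ holding whether $Z_n$ has $0,1$ or $2$ points) together with $\int_{\omega_0}|\tau_n'|\le M_n|\omega_0|$; in both cases one absorbs $\delta_0\le|\omega_0|\le|I_a|$ into the constant.

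The step I expect to be the main obstacle is the uniform-in-$n$ distortion estimate above. Because $h_0$ has square-root cusps over $u(PC_a)$ --- so $|h_0'|$, equivalently $|\tau_n'|^{-1}$, is genuinely unbounded --- the map $\tau_n$ really does degenerate near endpoints of $\omega_0$, and one must at once (i) confine this degeneration to the endpoints, and (ii) show it is exactly quadratic with the correct scale $|\tau_n''(c)|\asymp M_n/|\omega_0|$, i.e.\ relate the local behaviour of $\tau_n$ at a critical endpoint to its global maximum through the square-root structure of $h_0^n|_{\omega_n}$; point (ii) can alternatively be approached by lifting to the disc, where $\tau_n$ becomes a holomorphic self-map of $\mathbb{D}$ contracting the Poincaré metric on the fixed finite family of compact arcs $\overline{\hat\omega_0}$. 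This quadratic degeneration is exactly what produces --- and forces --- the square in the left-hand inequality of \eqref{eqn:h-distortion}.
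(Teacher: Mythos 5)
Your reduction and the shape of the target estimate (at most quadratic degeneration at the endpoints, bounded distortion elsewhere) are the right ones, but the step you yourself flag as the main obstacle is a genuine gap, and the two arguments you offer for it do not work. First, the claim that the only critical points of the extended $\tau_n$ in $B_{\xi'}(\omega_0)$ lie in $\partial\omega_0$ is unjustified: each single-step branch $\zeta_j$ has critical points at points of $u(PC_a)$ lying on $\partial J_{j-1}$, and such a point can be off $\overline{\omega_{j-1}}$ yet at arbitrarily small positive distance from it (the intermediate intervals $\omega_{j-1}$ shrink while $u(PC_a)$ and the scale $\xi'$ are fixed); its pullback is then a critical point of $\tau_n$ outside $\overline{\omega_0}$ but possibly well inside the balls $B_{\xi''}(\{\theta\})$ you use, so both the identification of $Z_n$ and the "no critical point in the ball" step fail without further argument. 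Second, even where there are no critical points, your univalence argument is invalid: $|\tau_n'|\le\tilde\lambda_a^{-1}<1$ makes $\tau_n$ a Lipschitz contraction, but a contraction need not be injective (e.g.\ $z\mapsto\varepsilon z^2$), and nonvanishing derivative on a disc does not give injectivity on that disc either; so Koebe cannot be invoked as stated, and uniform-in-$n$ univalence on complex balls of definite size is precisely the delicate point. Consequently the comparabilities you rely on downstream ($|\tau_n'|\asymp M_n$ away from $Z_n$, $\int_{\omega_0}|\tau_n'|\ge cM_n|\omega_0|$, and $|\tau_n''(c)|\asymp M_n/|\omega_0|$ obtained by "matching") are all unproven; the Poincar\'e-metric remark also does not yield Euclidean distortion bounds uniform in $n$.

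For comparison, the paper's proof is organized exactly around this difficulty. It first refines the partition to $\tilde{\mathcal{Q}}_n=\mathcal{Q}_{n+N}$ so that each element has at most one boundary point in $h_0^{-1}(u(PC_a))$, then runs a four-case analysis on where the backward orbit of $\partial\omega_0$ meets $u(PC_a)$, isolating at most one quadratic folding; the uniform quadratic constant there comes from the fact that the folding step is one of \emph{finitely many} single-step branches $\zeta_i$ with $\zeta_i''(c)\ne0$ (Lemma \ref{lem:continuation}(ii)), not from an estimate on the full composition. Crucially, in the case where $\partial\omega_0$ misses $u(PC_a)$ the paper does \emph{not} argue via Koebe on small $\theta$-balls: it passes back to the $x$-coordinate, where the corresponding branch $\sigma_n$ of $Q_a^{-n}$ is univalent on the whole slit domain $(\mathbb{C}\setminus\mathbb{R})\cup u^{-1}(J)$, applies Koebe there, and then controls the coordinate changes $u$ and $u^{-1}$ separately, using that their singularities are of square-root type and that $\sigma_n\circ u^{-1}(I_{\delta_0})$ stays away from $PC_a$, to recover univalence of $\tau_n$ on a $\theta$-ball of definite size. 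Some mechanism of this kind (global univalence in the $x$-coordinate, or an equivalent replacement) is what your proposal is missing; without it the core two-sided estimate, and hence \eqref{eqn:h-distortion}, is not established.
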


\begin{proof}
Let $N=N(a)\ge 1$ be the minimal integer such that for each $\omega\in\mclq_N$, $\partial\omega$ contains at most one point in $h_0^{-1}(u(PC_a))$. For each $n\ge 0$, denote $\mclq_{n+N}$ by $\tilde{\mclq}_n$. It suffices to prove the lemma for $\{\tilde{\mclq}_n\}_{n\ge0}$ instead of $\{\mclq_n\}_{n\ge0}$, because of the reasons below:
\begin{itemize}
  \item  there exists an integer $M=M(a)$, such that for each $n\ge 0$, every element in $\mclq_n$ is a union of no more than $M$ elements in  $\tilde{\mclq}_n$ with a finite set;
  \item once the left inequality in (\ref{eqn:h-distortion}) has been proved for $\{\tilde{\mclq}_n\}_{n\ge0}$, it follows that there exists $p>0$, such that for every $n\ge 0$ and every pair $(\omega,\tilde{\omega})\in\mclq_n\times\tilde{\mclq}_n$ with $\tilde{\omega}\subset\omega$, we have $|\tilde{\omega}|\ge p|\omega|$.
\end{itemize}

% compact form of the last line

%$$\inf_{n\ge 0}\inf_{\omega\in\mclq_n}\inf_{\substack{\tilde{\omega}\in\tilde{\mclq}_n\\ \tilde{\omega}\subset\omega}}\frac{|\tilde{\omega}|}{|\omega|}>0.$$
To begin with, recall the statement in (ii) of Lemma \ref{lem:expandingcoor}\,, which says that, for each pair $(J_0,J_1)\in\tilde{\mclq}_0\times\tilde{\mclq}_1$ with $h_0(J_1)=J_0$, $(h_0|_{J_1})^{-1}$ can be extended to a holomorphic function $\zeta:B_\xi(J_0)\to B_\xi(J_1)$. Moreover, for each $0<\xi'\le\xi$, and each subinterval $I$ of $J_0$, $\zeta(B_{\xi'}(I))\subset B_{\xi'}(\zeta(I))$. By the choice of $\tilde{\mclq}_0$, $\partial J_0\cap u(PC_a)$ contains at most one point. According to (ii) of Lemma \ref{lem:continuation}, we can fix $0<\xi'\le\xi$ small, independent of $(J_0,J_1)$, such that:

\begin{itemize}
  \item if $\partial J_0\cap u(PC_a)=\{c\}$, $c\ne u(a)$ and $\zeta(c)\notin PC_a$, then $c$ is the unique critical point of $\zeta$ in $B_{\xi'}(J_0)$ and $\zeta''(c)\ne 0$;
  \item otherwise, $\zeta:B_{\xi'}(J_0)\to B_{\xi'}(J_1)$ is univalent.
\end{itemize}

Given $\omega_n\in\tilde{\mclq}_n$, for each $0\le i\le n-1$, let $\omega_i=h_0^{n-i}(\omega_n)$,  let $J_i$ be the unique element in $\tilde{\mclq}_0$ containing $\omega_i$, and let $\zeta_i:B_{\xi'}(J_{i-1})\to B_{\xi'}(J_i)$ be the analytic continuation of $(h_0|_{\omega_i})^{-1}$. Then $(h_0^n|_{\omega_n})^{-1}$ can be extended to a holomorphic function $\tau_n:B_{\xi'}(\omega_0)\to B_{\xi'}(\omega_n)$, where $\tau_n=\zeta_n\circ\cdots\circ\zeta_1$.  To proceed, a classified discussion of the possibility of $\{\partial\omega_i\cap u(PC_a)\}_{i=0}^n$ is needed. There are four cases.

\begin{enumerate}
  \item [1.]$\partial\omega_0\cap u(PC_a)=\varnothing$. Denote $\sigma_n=u^{-1}\circ\tau_n\circ u=(Q_a^n|_{u^{-1}(\omega_n)})^{-1}$. Let $J$ be the element in $\mclq_0$ containing $\omega_0$. By definition, $\sigma_n$ can be extended to a univalent holomorphic function on $\mbbc_{u^{-1}(J)}:=(\mbbc\setminus\mbbr)\cup u^{-1}(J)$. For each $\delta>0$, denote $B_\delta(\omega_0)\cap\mbbr$ by $I_\delta$. Since $(u^{-1})'\ne 0$ on $J$ and since $\tilde{\mclq}_0$ is a finite set, there exists $\delta_0>0$, independent of $\omega_0$, such that $u^{-1}$ can be extended to a univalent holomorphic function on $B_{2\delta_0}(\omega_0)$, which satisfies that $I_{2\delta_0}\subset J$ and $u^{-1}(B_{2\delta_0}(\omega_0))\subset \mbbc_{u^{-1}(J)}$. Then $\sigma_n\circ u^{-1}$ is univalent on $B_{2\delta_0}(\omega_0)$, so by Koebe distortion theorem, the distortion of $\sigma_n\circ u^{-1}$ on $B_{\delta_0}(\omega_0)$ is only dependent on $\delta_0$. It follows that there exists $C>0$, determined by $\delta_0$ only, such than for either component of $\sigma_n\circ u^{-1}(I_{\delta_0}\setminus \omega_0)$, its length is no less than $C\cdot|u^{-1}(\omega_n)|$, where $u^{-1}(\omega_n)=\sigma_n\circ u^{-1}(\omega_0)$. Also note that $\sigma_n\circ u^{-1}(I_{\delta_0})$ does not intersect $PC_a$, the singular set of $u$. Because all the singularity points of $u$ are of square root type, we can conclude that there exists $\epsilon>0$, only dependent on $\delta_0$, such that $u$ can be extended to a univalent function on $B_\epsilon(u^{-1}(\omega_n))$. Since $\sigma_n\circ u^{-1}$ has bounded distortion on $B_{\delta_0}(\omega_0)$, finally we can find $0<\delta\le\delta_0$, determined by $\epsilon$ only, such that $\tau_n=u\circ\sigma_n\circ u^{-1}$ is univalent on $B_\delta(\omega_0)$. Then the conclusion follows from Koebe distortion theorem.

 \item [2.]$\partial\omega_n\cap u(PC_a)\ne\varnothing$. Then by the choice of $\xi'$, $\zeta_i:B_{\xi'}(J_{i-1})\to B_{\xi'}(J_i)$ is univalent, $\forall 1\le i\le n$. Therefore, $\tau_n:B_{\xi'}(\omega_0)\to B_{\xi'}(\omega_n)$ is univalent, and hence the conclusion also follows from Koebe distortion theorem.

 \item [3.]There exists $1\le i\le n$, such that $0\in\partial\omega_i$. Then $\partial\omega_j\cap u(PC_a)$ consists of a unique point $Q_a^{i-j}(0)$, $j=0,1,\dots, i$, and according to the definition of $\tilde{\mclq}_0$, $J_i\cap u(PC_a)=\varnothing$. Therefore, $\zeta_i\circ\cdots\circ\zeta_1:B_{\xi'}(\omega_0)\to B_{\xi'}(\omega_i)$ is in the situation of Case 2, i.e. it is univalent; $\zeta_n\circ\cdots\circ\zeta_{i+1}:B_{\xi'}(J_i)\to B_{\xi'}(J_n)$ is in the situation of Case 1, so it is univalent on $B_\delta(J_i)$. As a result, for $\delta'=\min(\xi',\delta)$, $\tau_n$ is univalent on $B_{\delta'}(\omega_0)$. Once again, the conclusion is implied by Koebe distortion theorem.

  \item [4.] There exists $1\le i\le n$, such that $\partial\omega_{i-1}\cap u(PC_a)=\{c\}$, $c\ne u(a)$ and $\partial\omega_i\cap u(PC_a)=\varnothing$. Note that $\zeta_i(c)\in\partial\omega_i\cap h_0^{-1}(u(PC_a))\subset \partial J_i$, by the definition of $\tilde{\mclq}_0$, which implies that $\partial J_i\cap u(PC_a)=\varnothing$. On the one hand, by the choice of $\xi'$, $\zeta_i:B_{\xi'}(J_{i-1})\to B_{\xi'}(J_i)$ has a unique critical point $c$ with $\zeta_i''(c)\ne 0$. It implies that there is a constant $C>1$, only dependent on $\xi'$, such that for every measurable set $E_i\subset \omega_i$, we have

      $$ C^{-1}\frac{|h_0(E_i)|^2}{|\omega_{i-1}|^2}\le\frac{|E_i|}{|\omega_i|}\le C\frac{|h_0(E_i)|}{|\omega_{i-1}|}. $$
      On the other hand, similar to the discussion in Case 3, $\zeta_{i-1}\circ\cdots\circ\zeta_1:B_{\xi'}(\omega_0)\to B_{\xi'}(\omega_{i-1})$ is in the situation of Case 2, and $\zeta_n\circ\cdots\circ\zeta_{i+1}:B_{\xi'}(J_i)\to B_{\xi'}(J_n)$ is in the situation of Case 1. Then by Koebe distortion theorem, there exists a constant $\tilde{C}>1$, only dependent on $\delta'$, such that for any measurable sets $E_{i-1}\subset \omega_{i-1}$ and $E_n\subset\omega_n$,

      $$\tilde{C}^{-1}|h_0^{i-1}(E_{i-1})|\le\frac{|E_{i-1}|}{|\omega_{i-1}|} \le \tilde{C}|h_0^{i-1}(E_{i-1})| \ ,\quad \tilde{C}^{-1}\frac{|h_0^{n-i}(E_n)|}{|\omega_i|}\le \frac{|E_n|}{|\omega_n|} \le \tilde{C} \frac{|h_0^{n-i}(E_n)|}{|\omega_i|}.$$
      Combining the estimates above, (\ref{eqn:h-distortion}) follows in this case.
\end{enumerate}
\end{proof}

\subsection{Backward shrinking of $Q_a$}
For further usage, let us give some detailed estimates on the accumulation rate of points in $\mbbc\setminus\mclj_a$ to $\mclj_a$ under iterated action of $Q_a^{-1}$.

\begin{lem}\label{lem:contracting}
For each $m\in\mbbn$, there exists $K_m>0$, such that the following statement holds. Let $z_0\in\mbbc\setminus\mclj_a$ and $z_i:=Q_a^{mi}(z_0)$, $\forall i\ge 0$. Assume that $z_n\notin V_a$ for some $n\in\mbbn$ and let $\sigma$ be the branch of $Q_a^{-mn}$ with $\sigma(z_n)=z_0$. Then there are two cases:
%along the backward orbit $z_0\mapsfrom Q_a(z_0)\mapsfrom\cdots\mapsfrom Q_a^{mn-1}(z_0)\mapsfrom z_n$.
\begin{itemize}
  \item [(i)] If  $z_1\notin Q_a^{-1}(V_a)$, then
  \begin{equation}\label{eqn:outside V_a}
   |z_0|\le K_m \abs{z_n}^{2^{-mn}} \quad\mbox{and}\quad |\sigma'(z_n)|\le K_m 2^{-mn}\abs{z_n}^{2^{-mn}-1}.
  \end{equation}

  \item [(ii)] Otherwise, there exists $0<i<n$, such that $z_j\in Q_a^{-1}(V_a)$ for $0\le j\le i$ and  $z_j\notin Q_a^{-1}(V_a)$ for $i<j\le n$, then

  \begin{equation}\label{eqn:inside V_a}
   |\sigma'(z_n)|\le K_m 2^{-m(n-i)}\lambda_a^{-mi}\abs{z_n}^{2^{-m(n-i)}-1}.
  \end{equation}

\end{itemize}
\end{lem}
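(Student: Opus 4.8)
The plan is to split the backward orbit $z_0, z_1, \dots, z_n$ according to whether iterates lie in the region $Q_a^{-1}(V_a)$ (where we have uniform expansion in the subhyperbolic metric $\rho_a$) or outside it (where $Q_a$ behaves like the superattracting map $w\mapsto w^2$ near infinity, so inverse branches contract polynomially). Concretely, first I would establish the superattracting estimate: since $Q_a(z)=a-z^2$, for $|z|$ large (say $|z|\ge R'$ for some $R'\ge R$) one has $|Q_a(z)|\ge \frac12|z|^2$ and more precisely $\log|Q_a(z)| = 2\log|z| + O(1)$, with the error controlled uniformly. Iterating the $m$-fold map, if $z_1,\dots,z_n$ all stay outside $Q_a^{-1}(V_a)$ (hence have modulus bounded below by some definite constant), then $\log|z_0| \le 2^{-mn}\log|z_n| + C$ for a constant $C=C(m)$, which is exactly the first inequality in \eqref{eqn:outside V_a} after exponentiating. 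For the derivative, differentiate the composition: $|\sigma'(z_n)| = \prod_{j} |(Q_a^m)'(z_{j-1})|^{-1}$, and since $(Q_a^m)'(w) = \prod_{k=0}^{m-1} (-2 Q_a^k(w))$, the dominant contribution is $\prod 2|Q_a^k(z_{j-1})|$; a telescoping/logarithmic bookkeeping using $|z_j|\approx |z_{j-1}|^2$ collapses the product to $2^{-mn}|z_n|^{2^{-mn}-1}$ up to the constant $K_m$. This handles case (i).

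For case (ii), I would decompose $\sigma$ as a composition $\sigma = \sigma_{\mathrm{in}} \circ \sigma_{\mathrm{out}}$, where $\sigma_{\mathrm{out}}$ is the branch of $Q_a^{-m(n-i)}$ sending $z_n$ to $z_i$ (all intermediate points outside $Q_a^{-1}(V_a)$, so case (i)'s estimate applies and gives $|\sigma_{\mathrm{out}}'(z_n)| \le K_m 2^{-m(n-i)}|z_n|^{2^{-m(n-i)}-1}$ and $|z_i|$ bounded above), and $\sigma_{\mathrm{in}}$ is the branch of $Q_a^{-mi}$ sending $z_i$ to $z_0$, with $z_0,\dots,z_i \in Q_a^{-1}(V_a)$. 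On this inner part I use the subhyperbolic metric: by \eqref{eqn:lambda_a}, $\rho_a(Q_a(w))|Q_a'(w)| \ge \lambda_a \rho_a(w)$ whenever $w\in Q_a^{-2}(V_a)$, and since $z_0,\dots,z_i$ all lie in $Q_a^{-1}(V_a)$, iterating gives $\rho_a(z_i)|(Q_a^{mi})'(z_0)| \ge \lambda_a^{mi}\rho_a(z_0)$, hence $|\sigma_{\mathrm{in}}'(z_i)| \le \lambda_a^{-mi} \rho_a(z_i)/\rho_a(z_0)$. The ratio $\rho_a(z_i)/\rho_a(z_0)$ is bounded: $z_0$ ranges over $Q_a^{-1}(V_a) \subset V_a$, but $\rho_a$ has integrable square-root singularities at $PC_a$, so it is not bounded below pointwise; however, since $z_i = Q_a^{m}(\cdots)$ with modulus bounded below (it's outside... wait, $z_i\in Q_a^{-1}(V_a)$), I would instead argue $\rho_a(z_i)$ is bounded above on the compact set $Q_a^{-1}(\overline{V_a})$ away from $PC_a$ — the key point being that $z_i$ is separated from $PC_a$ because $z_{i+1}\notin Q_a^{-1}(V_a)$ forces $z_i$ into a region where $\rho_a$ is bounded. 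The factor $\rho_a(z_0)^{-1}$ is the subtle one; but combining with $\sigma_{\mathrm{out}}$ and the chain rule $|\sigma'(z_n)| = |\sigma_{\mathrm{in}}'(z_i)|\cdot|\sigma_{\mathrm{out}}'(z_n)|$ absorbs everything into $K_m$, yielding \eqref{eqn:inside V_a}.

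The main obstacle I anticipate is the bookkeeping of the two competing estimates near the transition index $i$ — specifically, controlling $\rho_a(z_i)$ and $\rho_a(z_0)$ uniformly in the orbit. One must check that $z_i$ stays in a compact subset of $V_a\setminus PC_a$ (or at least a fixed distance from $PC_a$) so that $\rho_a(z_i)$ is bounded; this should follow because $z_{i+1}=Q_a^m(z_i)\notin Q_a^{-1}(V_a)$ confines $z_i$ to a compact annular-type region on which $\rho_a$ is continuous and positive. For $\rho_a(z_0)$, since $z_0$ can approach $PC_a$ and $\rho_a$ blows up there like $|z_0 - v|^{-1/2}$, one cannot simply bound $\rho_a(z_0)^{-1}$ from above — rather, the blow-up helps us since $\rho_a(z_0)^{-1}$ is then \emph{small}, so the inequality direction is favorable, but I'd want to state it as a clean uniform bound $\rho_a(z_i)/\rho_a(z_0)\le K_m'$ which holds because $z_0\in Q_a^{-1}(V_a)$ has $\rho_a(z_0)$ bounded \emph{below} on that compact set (the singularities of $\rho_a$ at $PC_a$ are blow-ups, not zeros, so $\rho_a \ge c_0 > 0$ on $Q_a^{-1}(\overline{V_a})$). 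With that observation the argument closes cleanly.

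Throughout I would keep all constants depending only on $m$ (and $a$), absorbing the finitely many transitional factors, and treat the edge case $i=0$ (which is case (i)) and the degenerate case where the orbit never leaves $Q_a^{-1}(V_a)$ — but the hypothesis $z_n\notin V_a$ rules the latter out, guaranteeing at least one escape and hence a well-defined transition index in case (ii).
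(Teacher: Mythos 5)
Your case (ii) is essentially the paper's own argument: the same decomposition of $\sigma$ at the transition index $i$, the outer piece estimated as in case (i), the inner piece estimated by iterating (\ref{eqn:lambda_a}) along the full orbit (the intermediate iterates do lie in $Q_a^{-2}(V_a)$, by forward invariance of the complement of $V_a$), and the ratio $\rho_a(z_i)/\rho_a(z_0)$ controlled by two facts you both use: $z_i$ lies in the compact, forward-invariant set $F_m:=\overline{\mbbc}\setminus Q_a^{-m-1}(V_a)$, which is disjoint from $PC_a$, so $\rho_a(z_i)$ is bounded above; and $\rho_a$ is bounded below on $V_a$ because its singularities at $PC_a$ are blow-ups, not zeros. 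That part is fine.

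In case (i) you depart from the paper, which simply conjugates $\sigma$ to a branch of $w\mapsto w^{2^{-mn}}$ via the B\"ottcher coordinate at $\infty$ and reads off both inequalities from the compactness of $F_m$, which contains the entire orbit. Your direct route can be made to work for the modulus bound (there the per-step errors enter with geometric weights $2^{-k}$ and sum to a constant), but as written the derivative estimate has a genuine gap. You need $\sum_{k=0}^{mn-1}\log|Q_a^k(z_0)|\ge(1-2^{-mn})\log|z_n|-C_m$ with $C_m$ independent of $n$; if you only use $\log|Q_a(z)|=2\log|z|+O(1)$ with a uniform $O(1)$ at every step --- which is all that ``modulus bounded below by a definite constant'' buys you --- the errors are summed with weight one and produce a spurious factor $e^{Cn}$, not absorbable into $K_m$ since $|z_n|$ may be only of size $R$. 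What actually makes the telescoping collapse is that after at most $m+1$ steps of the full orbit the points leave $V_a$ and their moduli grow doubly exponentially, so the per-step errors $\log|1-az^{-2}|$ are summable along the tail; the finitely many initial steps (including $z_0$ itself and the iterates $Q_a^k(z_0)$, $0<k<m$, which are \emph{not} covered by ``outside $Q_a^{-1}(V_a)$'' --- the case-(i) hypothesis says exactly $z_0\in F_m$) are handled by compactness of $F_m$, which in particular keeps the orbit away from the critical point $0$ so the factors $|Q_a'|$ are bounded below. This summability/Green-function input is precisely what the paper's B\"ottcher-coordinate formulation packages; you should either invoke it or prove the decay of the per-step errors explicitly.
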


\begin{proof}
Denote $F_m:=\overline{\mbbc}\setminus Q_a^{-m-1}(V_a)$. Then $F_m$ is a forward invariant compact subset of the Fatou set of $Q_a$. Note that on the Fatou set of $Q_a$, $\sigma$ can be conjugated to some branch of $z\mapsto z^{2^{-mn}}$ via the B\"{o}ttcher coordinate about $\infty$. In case (i), $z_i\in F_m$, $\forall 0\le i\le n$, and hence the conclusion follows from the compactness of $F_m$.

For case (ii), decompose $\sigma$ into $\sigma_1\circ\sigma_2$, where $\sigma_1$ is the branch of $Q_a^{-mi}$ with $\sigma_1(z_i)=z_0$ and $\sigma_2$ is the branch of $Q_a^{-m(n-i)}$ with $\sigma_2(z_n)=z_i$. Note that $z_j\in F_m$ when $j\ge i$. Therefore, as in case (i), $|\sigma_2'(z_n)|$ has an upper bound of order $2^{-m(n-i)}\abs{z_n}^{2^{-m(n-i)}-1}$. To estimate $|\sigma_1'(z_i)|$, because $z_i=Q_a^{mi}(z_0)\in Q_a^{-1}(V_a)$, we can apply (\ref{eqn:lambda_a}) repeatedly to the orbit $\{Q_a^j(z_0)\}_{j=0}^{mi-1}$. As a result,
$$\rho_a(z_0)|\sigma_1'(z_i)|\le \lambda_a^{-mi}\rho_a(z_i).$$
Since $z_i\in F_m$, $\rho_a(z_i)$ is bounded from above, so the conclusion in case (ii) follows.
\end{proof}

\subsection{A family of functions}

From now on, let us fix an arbitrary integer $m_1\ge m_0$ and choose $g=Q_a^{m_1}$ as our base dynamics. As mentioned in the introduction, we shall study $F$ defined in (\ref{eqn:F}) rather than $\mscf$. Let us summarize some frequently used notations and their basic properties here.
\begin{itemize}
  \item $\lambda_g:=\lambda_a^{m_1}\in(4,2^{m_1}]$ and $h:=h_0^{m_1}=u\circ g\circ u^{-1}$. By definition, $|h'|\ge \lambda_g$ on $I_a$.
  \item $\phi:=\varphi\circ u^{-1}$ on $I_a$. Without loss of generality, assume that $|\phi|\le 1$ on $I_a$.
  \item $\hI_b:=[-\sqrt{2b}, \sqrt{2b}]$. $Q_b$ maps $I_b$ to its interior.
  \item Assume that $\alpha$ is sufficiently small, so that $F$ maps $I_a\times \hI_b$ into itself.
  \item $\mclp_n:=\mclq_{m_1n}$, $\forall n\ge 0$. i.e. $\{\mclp_n\}_{n\ge 0}$ is a nested sequence of Markov partitions of $h$.
  \item Denote $F^n(\theta,y)$ by $(h^n(\theta),f_n(\theta,y))$. Due to (i) of Lemma \ref{lem:expandingcoor} and the definition of $F$, for every $\omega\in\mclp_0$ and every $n\ge 1$, $f_n$ is real analytic on $\overline{\omega}\times\mbbr$.
\end{itemize}

Given a non-constant polynomial $\varphi$, let us introduce a family of functions as below, which is inspired by considering the $\alpha$-linear approximation of the derivative of high $F$-iteration of a horizontal curve. The importance of this family will become clear later. See Lemma \ref{lem:F-invariance} and Lemma \ref{lem:separate}.
\begin{defn}\label{def:mathcalT}
Given $\omega\in\mathcal{P}_0$, an analytic function $T:\omega\to \mathbb{R}$ is said to be in the family $\mathcal{T}_\omega$, if there exist a sequence $\Brac{\omega_n\in \mclp_{n}}$ with $\omega_0=\omega$ and $h\brac{\omega_{n+1}}=\omega_n$, $\forall n\ge 0$, and a sequence $\Brac{c_n\in\mbbr}$ with $\abs{c_n}\le 4^{n-1}$, $\forall n\ge 2$, such that

\begin{equation}\label{eqn:functionT}
T(\theta)=(\phi\circ\tau_1)'(\theta)+\sum^{\infty}_{n=2}c_n (\phi\circ\tau_n)'(\theta)\,, \ \forall \theta\in\omega,
\end{equation}
where $\tau_n=\brac{h^n|_{\omega_n}}^{-1}$.
\end{defn}

Since in (\ref{eqn:functionT}), $|c_n|\le 4^{n-1}$ and $|\tau_n'(\theta)|\le \lambda_g^{-n}$, the series always converges.  Due to (\ref{eqn:tau^(i)}), actually each $T\in\mathcal{T}_\omega$ extends to a holomorphic function defined on $B_{\xi}(\omega)$.

All the useful properties of $\mclt$  are listed in the lemma below. The following notations are used in its statement.

\begin{itemize}
  \item $\omega_c$ denotes the unique element in $\mclp_0$ containing $0$.
  \item $\omega_c^\pm\subset\omega_c$ denote the only two elements in $\mclp_1$ with $0\in\partial\omega_c^\pm$.
  \item $\tau_c^\pm:=(h|_{\omega_c^\pm})^{-1}$ are both defined on $h(\omega_c^+)=h(\omega_c^-)\in\mclp_0$.
\end{itemize}
By definition, $\tau_c^\pm$ satisfy that

\begin{equation}\label{eqn:pm id}
\gamma^\pm:=u^{-1}\circ \tau_c^\pm\circ h\circ u: u^{-1}(\omega_c^\mp)\to u^{-1}(\omega_c^\pm)\,, \quad x\mapsto -x.
\end{equation}

\begin{lem}\label{lem:mathcalT}
Let $\varphi$ be a non-constant polynomial. For each $\omega\in\mathcal{P}_0$, the family $\mclt_\omega$ defined for $\varphi$, considered as a space of holomorphic functions defined on $B_{\xi}(\omega)$, is compact with respect to the compact-open topology. Moreover,
\begin{itemize}
  \item [(i)] There exist $A_n>0$, $n=0,1,2,\dots$, such that for each $T\in\mclt_{\omega}$,
  \begin{equation}\label{eqn:A_n}
|T^{(n)}(\theta)|\le A_n,\quad\forall\theta\in\omega, \forall n\ge 0.
\end{equation}
  \item [(ii)] $0\notin\mclt_\omega$. More specifically, there exist $l_0\in \mbbn$ and $B>0$, such that for each $T\in\mclt_{\omega}$,
  \begin{equation}\label{eqn:B}
\sum_{i=0}^{l_0-1}|T^{(i)}(\theta)|\ge 2B, \quad\forall\theta\in\omega.
\end{equation}
  \item [(iii)] If, additionally, $\varphi$ is of odd degree, then for each $T\in\mclt_{\omega_c}$ and each $D\in[-4,4]$, the two functions
      \begin{equation}\label{eqn:nonodd}
       T^\pm:=(\phi'+ D\cdot T)\circ\tau_c^\pm\cdot(\tau_c^\pm)'\in\mclt_{h(\omega_c^\pm)}
      \end{equation}
      are not identical to each other.
\end{itemize}
\end{lem}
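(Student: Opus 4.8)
The plan is to carry everything over to the $x$-coordinate via $u$: writing $\sigma_n:=\bigl(g^n|_{u^{-1}(\omega_n)}\bigr)^{-1}$, a branch of $Q_a^{-m_1n}$, one has $(\phi\circ\tau_n)\circ u=\varphi\circ\sigma_n$, so every assertion about $\mclt_\omega$ becomes an assertion about linear combinations $(\varphi\circ\sigma_1)'+\sum_{n\ge2}c_n(\varphi\circ\sigma_n)'$. The two ingredients are the exponential contraction (\ref{eqn:tau^(i)}) near $\mclj_a$ and the B\"ottcher asymptotics of $\sigma_n$ near $\infty$ supplied by Lemma~\ref{lem:contracting}. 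For the uniform bounds and compactness: since $\phi$ is holomorphic and, together with all its derivatives, bounded on a neighbourhood of $\bigcup_{\omega'\in\mclp_1}\overline{\omega'}$, while each $\tau_n$ extends to $B_\xi(\omega)$ with $|\tau_n^{(i)}|\le D_{\xi,i}\tilde\lambda_{\,a}^{-m_1n}$, the chain rule gives $|(\phi\circ\tau_n)^{(i+1)}|\le C_i\tilde\lambda_{\,a}^{-m_1n}$ on every compact subset of $B_\xi(\omega)$ with $C_i$ independent of $n$ and of the branch. As $m_1\ge m_0$ forces $\tilde\lambda_{\,a}^{\,m_1}>4$, summing these against $|c_n|\le 4^{n-1}$ yields (\ref{eqn:A_n}), and also shows that $(\{\omega_n\},\{c_n\})\mapsto T$ is a continuous map from the compact space $\bigl\{(\omega_n):\omega_n\in\mclp_n,\ h(\omega_{n+1})=\omega_n,\ \omega_0=\omega\bigr\}\times\prod_{n\ge2}[-4^{n-1},4^{n-1}]$ into the space of holomorphic functions on $B_\xi(\omega)$ with the compact-open topology; hence $\mclt_\omega$ is the continuous image of a compact set, so it is compact.

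\textbf{Assertion (ii).} The quantitative bound (\ref{eqn:B}) follows from the qualitative statement $0\notin\mclt_\omega$ by compactness: if (\ref{eqn:B}) failed one could pick $T_k\in\mclt_\omega$ and $\theta_k\in\overline{\omega}$ with $\sum_{i=0}^{k-1}|T_k^{(i)}(\theta_k)|\to 0$, pass to a subsequence with $T_k\to T_*\in\mclt_\omega$ and $\theta_k\to\theta_*\in\overline{\omega}\subset B_\xi(\omega)$, conclude that every derivative of $T_*$ vanishes at $\theta_*$, hence $T_*\equiv 0$ — impossible. To prove $0\notin\mclt_\omega$, pull back the hypothetical identity $T\equiv 0$ to the $x$-coordinate, getting $(\varphi\circ\sigma_1)'+\sum_{n\ge2}c_n(\varphi\circ\sigma_n)'\equiv 0$ on $u^{-1}(B_\xi(\omega))$, and continue this identity analytically along a path avoiding the finite set $PC_a$ out to $\{|z|>R\}$, which lies in the basin of $\infty$; there each $\sigma_n$ is a genuine single-valued branch of $Q_a^{-m_1n}$, and by Lemma~\ref{lem:contracting} (equivalently the B\"ottcher coordinate) $|\sigma_n(z)|\asymp |z|^{2^{-m_1n}}$ and $|\sigma_n'(z)|\asymp 2^{-m_1n}|z|^{2^{-m_1n}-1}$, whence $|(\varphi\circ\sigma_n)'(z)|\asymp 2^{-m_1n}|z|^{\,\deg\varphi\cdot 2^{-m_1n}-1}$ as $|z|\to\infty$. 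Since the exponent $\deg\varphi\cdot 2^{-m_1n}-1$ is strictly largest at $n=1$, and $\sum_{n\ge2}4^{n-1}2^{-m_1n}<\infty$ because $2^{m_1}>4$, the $n=1$ term strictly dominates the rest as $|z|\to\infty$, contradicting the identity.

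\textbf{Assertion (iii).} That $T^\pm\in\mclt_{h(\omega_c^\pm)}$ is a direct computation (take $\hat c_2=D$, $\hat c_m=Dc_{m-1}$, which obey $|\hat c_m|\le 4^{m-1}$). For the non-coincidence, let $\Psi$ be an antiderivative of $T$ and $\Theta:=\phi+D\Psi$, so $T^\pm=(\Theta\circ\tau_c^\pm)'$; using $\tau_c^\pm=u\circ v^\pm$ with $v^\pm:=u^{-1}\circ\tau_c^\pm$ and the reflection $v^+=-v^-$ read off from (\ref{eqn:pm id}), one gets $\Theta\circ\tau_c^+-\Theta\circ\tau_c^-=-2\,(\Theta\circ u)_{\mathrm{odd}}\circ v^-$. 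Since $v^-$ is a nonconstant real-analytic map onto an interval, and an odd analytic function constant on an interval must vanish, $T^+\equiv T^-$ holds iff $\Theta\circ u=\varphi+D(\Psi\circ u)$ is even near $0$, i.e. iff $\varphi'+D\,g$ is odd near $0$, where $g(w):=T(u(w))\,u'(w)=(\varphi\circ\sigma_1)'(w)+\sum_{n\ge2}c_n(\varphi\circ\sigma_n)'(w)$. Assume this. Then $(\varphi')_{\mathrm{even}}+D\,g_{\mathrm{even}}\equiv 0$ near $0$; continuing to $\{|w|>R\}$ as before, the estimate above gives $|g(w)|\le C|w|^{\,\deg\varphi\cdot 2^{-m_1}-1}$ for $|w|$ large, hence $|g_{\mathrm{even}}(w)|\le C|w|^{\,\deg\varphi\cdot 2^{-m_1}-1}=o\bigl(|w|^{\deg\varphi-1}\bigr)$. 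Here is ``the claim'': because $\deg\varphi$ is \emph{odd}, $\deg\varphi-1$ is even, so the top coefficient of $\varphi'$ survives in $(\varphi')_{\mathrm{even}}$, which is therefore a polynomial of degree exactly $\deg\varphi-1$, so $|(\varphi')_{\mathrm{even}}(w)|\asymp |w|^{\deg\varphi-1}$; this cannot be cancelled by the strictly smaller term $D\,g_{\mathrm{even}}(w)$ as $|w|\to\infty$, a contradiction. (If $\deg\varphi$ were even one could only assert $\deg(\varphi')_{\mathrm{even}}\le \deg\varphi-2$, and if $\varphi$ is even the argument collapses entirely — this is the irremovable use of oddness; the case $D=0$ is immediate since $\varphi'$ itself is not odd.)

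\textbf{Main obstacle.} The delicate point in (ii)--(iii) is the analytic continuation: one must continue a single identity, valid near $\mclj_a$ where it is produced by subhyperbolicity and Lemma~\ref{lem:expandingcoor}, along one path avoiding $PC_a$ to a neighbourhood of $\infty$ so that \emph{all} the $\sigma_n$ become simultaneously single-valued there, and then control the whole tail $\sum_{n\ge2}c_n(\varphi\circ\sigma_n)'$ uniformly through Lemma~\ref{lem:contracting}. This is exactly where subhyperbolicity near the Julia set and the super-attracting behaviour of $Q_a$ near $\infty$ have to be combined; once that is in place, the degree and growth comparisons are routine.
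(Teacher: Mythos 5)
Your route is the paper's: uniform bounds and compactness for (i), transfer to the $x$-coordinate and domination of the leading term near $\infty$ for (ii), and the reflection $x\mapsto -x$ from (\ref{eqn:pm id}) reducing (iii) to a non-oddness statement, with the odd degree of $\varphi$ entering through the even-degree leading term of $\varphi'$. The compactness-via-parameter-space argument and the antiderivative/even--odd phrasing of (iii) are harmless variants of the paper's Montel argument and of its identity $S=S\circ\gamma^-\cdot(\gamma^-)'$.

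However, there is a genuine gap at the heart of (ii) and (iii): your control of the tail $\sum_{n\ge 2}c_n(\varphi\circ\sigma_n)'$ near $\infty$. You assert $|(\varphi\circ\sigma_n)'(z)|\asymp 2^{-m_1 n}|z|^{\deg\varphi\cdot 2^{-m_1 n}-1}$ and then sum $4^{n-1}2^{-m_1 n}$, invoking only $2^{m_1}>4$. For each \emph{fixed} $n$ this asymptotic holds as $|z|\to\infty$, but to dominate the infinite sum at a fixed large $|z|$ you need a bound uniform in $n$, and that is false: once $n$ exceeds the time $n_z$ at which the backward orbit $\sigma_n(z)$ enters $Q_a^{-1}(V_a)$ (which it must, since backward orbits accumulate on $\mclj_a$), the contraction of the inverse branch per $m_1$-block is no longer governed by the superattracting rate $2^{-m_1}$ but only by the subhyperbolic rate $\lambda_g^{-1}=\lambda_a^{-m_1}$, measured moreover in the orbifold metric $\rho_a$. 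This is precisely why the paper's proof of the Claim splits the orbit at $n_z$, using (\ref{eqn:outside V_a}) for $n\le n_z$ and (\ref{eqn:inside V_a}) (i.e.\ (\ref{eqn:lambda_a})) for $n>n_z$, and why the standing hypothesis $m_1\ge m_0$, i.e.\ $\lambda_g>4$, is what beats the weights $4^{n-1}$; with your estimate alone the choice of $m_0$ would never be used. You allude to Lemma \ref{lem:contracting} in your closing paragraph, but the estimate you actually run never invokes its case (ii), so the step ``the $n=1$ term strictly dominates the rest'' is unsupported as written. A secondary inaccuracy: the branches $\sigma_n$ are \emph{not} single-valued on $\{|z|>R\}$ (the monodromy of $Q_a^{-1}$ around $\infty$ swaps the two preimages), so you cannot continue the identity to a full neighbourhood of $\infty$; you must work, as the paper does, on a simply connected slit domain such as $(\mbbc\setminus\mbbr)\cup J$ (and its symmetric version $(\mbbc\setminus\mbbr)\cup(J\cap(-J))$ for the oddness argument in (iii)), on which all $\sigma_n$ extend simultaneously and Lemma \ref{lem:contracting} applies. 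Both issues are repairable exactly along the lines of the paper's Claim, but as submitted the key analytic estimate is not proved.
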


\begin{proof} By (\ref{eqn:tau^(i)}) and (\ref{eqn:functionT}), functions in the family $\mclt_{\omega}$ are uniformly bounded on $B_\xi(\omega)$. Then according to Montel's theorem, $\mclt_{\omega}$ is a pre-compact subset of holomorphic functions on $B_\xi(\omega)$ with respect to the compact-open topology. On the other hand, in (\ref{eqn:functionT}), for each $n\ge 1$, there are only finitely many choices of $\tau_n$, so by the definition of $\mclt_{\omega}$, it is also a closed subset. Therefore, $\mclt_{\omega}$ is compact. Assertion (i) follows from the compactness of $\mclt_{\omega}$ immediately.

To prove (ii) and (iii), let us change back to the $x$ coordinate from the $\theta$ coordinate to make use of the complex dynamics of $Q_a$ on the whole Riemann sphere. (ii) and (iii) will be deduced from:
\begin{clm}
Given $J\in u^{-1}(\mclp_0)$, let $\Brac{\sigma_n}_{n\ge 0}$ be a sequence of real analytic maps defined on $J$ with $\sigma_0=\id_J$ and $g\circ\sigma_n=\sigma_{n-1}$, $\forall n\ge 1$, and let $\Brac{c_n}_{n\ge 1}$ be a sequence of numbers with $|c_n|\le 4^n$, $\forall n\ge1$. Then
\begin{equation}\label{eqn:functionS}
 S:=\varphi'+\sum_{n=1}^{\infty}c_n \brac{\varphi\circ\sigma_n}'
\end{equation}
is not identically zero. Moreover, if $\varphi$ is of odd degree and $J=u^{-1}(\omega_c)\ni0$, then $S$ is not an odd function on $J\cap(-J)$.
\end{clm}

\begin{proof}[Proof of Claim]
Note that $S$ can be analytically continued to a holomorphic function defined on $\mbbc_J:=\brac{\mbbc\setminus\mbbr}\cup J$. The basic idea is to show that around $\infty$, the $\varphi'$ term in the series expression of $S$ is dominating. To this end, take an arbitrary $z\in\mbbc_J\setminus V_a$, and let $n_z$ be the minimal integer such that $\sigma_{n_z}(z)\in Q_a^{-1}(V_a)$. The existence of $n_z$ is guaranteed by the fact that the Fatou set of $Q_a$ equals to the attracting basin of infinity, and the backward $Q_a$-invariance of $V_a$ implies that $\sigma_n(z)\notin Q_a^{-1}(V_a)$ if and only if $n<n_z$. Then we can apply Lemma \ref{lem:contracting} to $z$ for $m=m_1$. Firstly, when $n\le n_z$, by (\ref{eqn:outside V_a}),

$$|\brac{\varphi\circ\sigma_n}'(z)|\le C \cdot 2^{-m_1n}\abs{z}^{\deg\varphi\cdot2^{-m_1n}-1}.$$
Secondly, when $n>n_z$, by (\ref{eqn:inside V_a}),

$$|\brac{\varphi\circ\sigma_n}'(z)|\le C\cdot 2^{-m_1n_z}\lambda_g^{n_z-n}\abs{z}^{2^{-m_1n_z}-1}.$$
Here $C>0$ is independent of $z$ or $n$. Since $\abs{c_n}\le 4^{n-1}$ and $2^{m_1}\ge\lambda_g>4$, combining the two inequalities above, we have:

$$\big|\sum_{n=1}^{\infty}c_n\brac{\varphi\circ\sigma_n}'(z)\big|\le C'\cdot \abs{z}^{\deg{\varphi}\cdot2^{-m_1}-1},$$
where $C'>0$ is independent of $z$. On the other hand, as $|z|\to\infty$, $\abs{\varphi'(z)}\asymp\abs{z}^{\deg\varphi-1}$. Therefore,

\begin{equation}\label{eqn:asymptotic}
    \lim_{\substack{z\in\mbbc_J\\|z|\to\infty}}\frac{S(z)}{\varphi'(z)}=1.
\end{equation}

Now we can complete the proof. Since $\varphi$ is non-constant, (\ref{eqn:asymptotic}) implies that $S$ cannot be identically zero, i.e. the first statement of the claim holds. When $\varphi$ is of odd degree, $\abs{\varphi'(z)+\varphi'(-z)}$ is bounded away from zero as $|z|\to\infty$. When $0\in J$, $\mbbc_{J\cap(-J)}$ is a domain containing $0$ and symmetric about $0$. These facts together with (\ref{eqn:asymptotic}) imply that for $z\in \mbbc_{J\cap(-J)}$, $|S(z)+S(-z)|$ is also bounded away from zero as $|z|\to\infty$. The second statement of the claim follows.
\end{proof}

Now let us prove (ii) and (iii) with the help of the claim. To start with, let us clarify the relation between the family $\mclt$ and functions in the form of  (\ref{eqn:functionS}). Given $\omega\in\mclp_0$ and $T=(\phi\circ\tau_1)'+\sum_{n=2}^{\infty}c_n (\phi\circ\tau_n)'\in \mclt_\omega$, denote $J_1=u^{-1}(\tau_1(\omega))$ and let $S=T\circ(h\circ u)\cdot(h\circ u)'$ on $J_1$. By definition, $\phi\circ\tau_n\circ h\circ u=\varphi\circ \sigma_{n-1}$, where $\sigma_0=\id_{J_1}$ and $g\circ\sigma_n=\sigma_{n-1}$ on $J_1$, $\forall n\ge 1$. It implies that $S=\varphi'+\sum_{n=1}^{\infty}c_{n+1}(\varphi\circ\sigma_n)'$ has the form of (\ref{eqn:functionS}) and it is automatically well defined on $J\supset J_1$ with $J\in u^{-1}(\mclp_0)$.

To prove assertion (ii), for each $T\in\mclt_\omega$, let $S=T\circ(h\circ u)\cdot(h\circ u)'$ on $u^{-1}(\tau_1(\omega))$. The claim says that $S$ is not identically $0$, which implies that $0\notin\mclt_{\omega}$. The rest of assertion (ii) follows from this fact and the compactness of $\mclt_{\omega}$ easily by reduction to absurdity.

To prove assertion (iii), by reduction to absurdity, for $T^\pm$ appearing in (\ref{eqn:nonodd}), suppose that $T^+\equiv T^-$ on $h(\omega_c^\pm)$. Define $S:=(\phi'+ D\cdot T)\circ u\cdot u'$ on $J=u^{-1}(\omega_c)$. By definition, $S$ has the form of (\ref{eqn:functionS}), and $S=T^+\circ(h\circ u)\cdot(h\circ u)'$ on $u^{-1}(\omega_c^+)$. By the assumption $T^+\equiv T^-$ on $h(\omega_c^\pm)$, we have:

$$S=T^-\circ(h\circ u)\cdot(h\circ u)'=S\circ\gamma^-\cdot(\gamma^-)'\quad\mbox{on}\quad u^{-1}(\omega_c^+),$$
where $\gamma^-$ is defined in (\ref{eqn:pm id}) and $\gamma^-=-\id$ on $u^{-1}(\omega_c^+)$ . It implies that $S$ is an odd function on $J\cap(-J)$, which contradicts to the claim and completes the proof.
\end{proof}

\begin{rmk}
It should be noted that without assuming that $\varphi$ is of odd degree, the second statement of the claim in the proof of Lemma \ref{lem:mathcalT}, and therefore assertion (iii) in Lemma \ref{lem:mathcalT}, could fail in general. For example, given $c\in[-4,4]$, let  $\varphi(x)=g(x)+cx$ and let $S=\varphi'+\sum_{n=1}^{\infty}(-c)^n(\varphi\circ\sigma_n)'$ in the form of (\ref{eqn:functionS}). Then actually $S=g'$ is an odd function.
\end{rmk}

\subsection{Building expansion}

Here let us summarize some useful results from \cite[Lemma 2.4,\,2.5]{V} and their proofs. It should be noted that these results are only based on the skew-product form of $F$ and the Misiurewicz-Thurston property of $Q_b$, i.e. they are irrelevant to choice of $h$ and $\phi$.
\begin{lem}\label{lem:BE}
There exist constants $\delta_*>0$, $C_*>0$, $1<\sigma<2$ and an integer $N_\alpha$ with $\sigma^{N_\alpha}\le \alpha^{-1}\le 4^{N_\alpha}$, such that when $\alpha$ is small, for an orbit $(\theta_i,y_i)=F^i(\theta,y)$, $(\theta,y)\in I_a\times\hI_b$, $i=0,1,\dots$, the following statements hold.
\begin{itemize}
  \item [(i)]  If $\abs{y}<2\sqrt{\alpha}$, then $\abs{y_k}\ge\delta_*$, $k=1,\dots, N_\alpha$. Moreover, for each $0<\eta\le\frac{1}{3}$, when $\alpha$ is small enough, $\abs{\pd{f_{N_\alpha}}{y}(\theta,y)}\ge \abs{y}\alpha^{-1+\eta}$.
  \item [(ii)] If $\abs{y_i}\ge\sqrt{\alpha}$, $i=0,1\dots,k-1$, then $\abs{\pd{f_k}{y}(\theta,y)}\ge C_*\sqrt{\alpha}\sigma^k$.  If, in addition, $\abs{y_k}\le\delta_*$, then $\abs{\pd{f_k}{y}(\theta,y)}\ge C_*\sigma^k$.
\end{itemize}
\end{lem}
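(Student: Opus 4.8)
The plan is to follow the proof of \cite[Lemma 2.4,\,2.5]{V} essentially verbatim, since that argument uses only the skew-product form of $F$ and the Misiurewicz--Thurston property of $Q_b$ (in particular nothing about $h$, $\phi$ or $m_1$). The starting point is that the skew-product structure gives, by induction on $k$ and the chain rule,
$$\pd{f_k}{y}(\theta,y)=\prod_{i=0}^{k-1}Q_b'(y_i)=(-2)^k\prod_{i=0}^{k-1}y_i,$$
so that $\abs{\pd{f_k}{y}(\theta,y)}=2^k\prod_{i=0}^{k-1}\abs{y_i}$, and the whole lemma becomes a statement about how the recurrence of the orbit $(y_i)$ to the critical point $0$ of $Q_b$ affects this product. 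The two facts about $Q_b$ I would record first are: \textbf{(a)} the postcritical set $P_b=\Brac{Q_b^n(0):n\ge1}$ is a finite set not containing $0$, and the critical orbit lands on a repelling cycle, so there are $C_b\ge1$ and $\gamma_b\in(1,4)$ with
$$C_b^{-1}\gamma_b^n\le\abs{(Q_b^n)'(b)}=2^n\prod_{j=1}^{n}\abs{Q_b^j(0)}\le C_b\gamma_b^n\qquad(n\ge1),$$
the same estimate holding, up to a uniform constant, for products $\prod_{j=l+1}^{l+n}\abs{Q_b'(Q_b^j(0))}$ starting anywhere along the postcritical orbit; and \textbf{(b)} the classical expansion-away-from-the-critical-point estimate for Misiurewicz maps: there are $\delta_*>0$, $c_0>0$ and $\sigma\in(1,\sqrt{\gamma_b}\,]$ (since $\gamma_b<4$ this may be taken $<2$) such that $\abs{(Q_b^n)'(x)}\ge c_0\sigma^n$ whenever $x,Q_b(x),\dots,Q_b^{n-1}(x)\notin(-\delta_*,\delta_*)$, and also when in addition $x\in(-\delta_*,\delta_*)$ but $Q_b^i(x)\notin(-\delta_*,\delta_*)$ for $1\le i<n$. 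The strict inequality $\gamma_b<4$ is exactly where $b<2$ enters: every cycle point lies in $(-\sqrt{2b},\sqrt{2b})\subset(-2,2)$, so its multiplier has absolute value $<4$.

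For part (i): since $\abs{y}<2\sqrt\alpha$, the orbit begins with a deep close return, and for $i\ge1$ the point $y_i$ shadows $Q_b^i(0)$. Writing $e_i=y_i-Q_b^i(0)$, one has $e_1=-y^2+\alpha\phi(\theta)$ and, for $i\ge1$, $e_{i+1}=Q_b'(Q_b^i(0))\,e_i-e_i^2+\alpha\phi(h^i\theta)$; iterating and using \textbf{(a)} to control the products of derivatives along the postcritical orbit gives, as long as $\abs{e_i}<\delta_*$, the bound $\abs{e_i}\le C\alpha\gamma_b^i$ with $C$ uniform (the quadratic terms $e_i^2$ are lower order over the relevant range, where $\alpha\gamma_b^i=O(1)$). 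I would then \emph{define} $N_\alpha$ as the largest integer with $C\alpha\gamma_b^{N_\alpha}\le\delta_*$, so that $N_\alpha=\log_{\gamma_b}\alpha^{-1}+O(1)$ and the bootstrap closes; then $\abs{y_i}\ge\abs{Q_b^i(0)}-\abs{e_i}\ge\delta_*$ for $1\le i\le N_\alpha$, which is the first claim of (i). Moreover $\sum_{i\ge1}\abs{e_i}/\abs{Q_b^i(0)}=O(1)$ with each term $\le\tfrac12$, so $\prod_{i=1}^{N_\alpha-1}\abs{y_i}\ge(\mathrm{const})\prod_{i=1}^{N_\alpha-1}\abs{Q_b^i(0)}$, whence by \textbf{(a)}
$$\abs{\pd{f_{N_\alpha}}{y}(\theta,y)}=\abs{y}\cdot 2^{N_\alpha}\prod_{i=1}^{N_\alpha-1}\abs{y_i}\ \ge\ (\mathrm{const})\,\abs{y}\,\gamma_b^{N_\alpha}\ \ge\ c_1\abs{y}\,\alpha^{-1},$$
which is $\ge\abs{y}\alpha^{-1+\eta}$ as soon as $\alpha^\eta\le c_1$, i.e. for $\alpha$ small depending on $\eta$. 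Finally, $N_\alpha=\log_{\gamma_b}\alpha^{-1}+O(1)$ together with $1<\sigma\le\sqrt{\gamma_b}<\gamma_b<4$ gives $\sigma^{N_\alpha}\le\alpha^{-1}\le4^{N_\alpha}$ for $\alpha$ small, the required size of $N_\alpha$.

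For part (ii): I would run Viana's bound/free-period analysis of $(y_i)_{0\le i<k}$, cutting at the times where $\abs{y_i}<\delta_*$. On a maximal \emph{free} run (all $\abs{y_i}\ge\delta_*$) the perturbed orbit avoids $(-\delta_*,\delta_*)$ by a margin far larger than $\sqrt\alpha$, so it is shadowed closely enough by a genuine $Q_b$-orbit avoiding $(-\delta_*,\delta_*)$ that \textbf{(b)} transfers: the product of the factors $2\abs{y_i}$ over the run is $\ge c_0\sigma^{(\mathrm{length})}$. On a \emph{bound} period starting at a close return of depth $\rho=\abs{y_i}\in[\sqrt\alpha,\delta_*)$, the orbit shadows $Q_b^\bullet(0)$ with error $\le C(\alpha+\rho^2)\gamma_b^\bullet$ (here $\alpha\le\rho^2$, exactly the hypothesis $\rho\ge\sqrt\alpha$); the period has length $t\asymp\log_{\gamma_b}(1/\rho^2)$, over which, by \textbf{(a)}, $\prod 2\abs{y_j}\ge c\rho\,\gamma_b^t\asymp c\delta_*\rho^{-1}$, and since $\sigma\le\sqrt{\gamma_b}$ this is also $\ge c'\sigma^t$. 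Multiplying the gains over all \emph{complete} runs yields $\abs{\pd{f_k}{y}(\theta,y)}\ge C_*\sigma^k$; the only loss is that the run containing $k-1$ may be an \emph{incomplete} bound period, whose contribution to $\prod_{i<k}2\abs{y_i}$ is $\ge c\rho\gamma_b^{s-1}\ge c\sqrt\alpha\,\sigma^s$ (with $s$ its length and $\rho\ge\sqrt\alpha$), forcing the weaker bound $\abs{\pd{f_k}{y}}\ge C_*\sqrt\alpha\,\sigma^k$ in general. If, however, $\abs{y_k}\le\delta_*$, then $y_{k-1}$ is not a close return (a close return at $k-1$ would give $\abs{y_k}\ge b-\delta_*^2-\alpha>\delta_*$) and, more generally, $y_k$ lies outside the shadowing part of any bound period, so the last bound period has completed before time $k$; its full gain $\ge c\delta_*\rho^{-1}\ge c'\sigma^{(\mathrm{length})}$ is then realized and the $\sqrt\alpha$ is recovered, giving $\abs{\pd{f_k}{y}}\ge C_*\sigma^k$.

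The step I expect to be the main obstacle — and the one I would simply cite \cite{V} for — is the \emph{transfer} of the expansion estimates of the genuine map $Q_b$ (both along the critical orbit and away from the critical point) to the orbit $(y_i)$, which is only an $\alpha$-pseudo-orbit of $Q_b$: one must show, uniformly over all orbits and all small $\alpha$, that the accumulated $O(\alpha)$ errors during bound and free periods do not spoil the bookkeeping, so that $\prod\abs{Q_b'(y_i)}$ inherits the growth of the corresponding products along true $Q_b$-orbits. This is precisely the content of Viana's argument, which, relying only on the skew-product form of $F$ (through $\pd{f_k}{y}=\prod Q_b'(y_i)$ and the pseudo-orbit property) and on the Misiurewicz--Thurston property of $Q_b$, applies here without change.
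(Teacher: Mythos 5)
Your proposal is correct and takes essentially the same route as the paper: Lemma \ref{lem:BE} is presented there purely as a summary of \cite[Lemma 2.4, 2.5]{V} and their proofs, with the observation (which you also make and use) that those arguments depend only on the skew-product form of $F$, i.e.\ on $\pd{f_k}{y}=\prod_{i<k}Q_b'(y_i)$ and the pseudo-orbit property, together with the Misiurewicz--Thurston property of $Q_b$. Your reconstruction of the bound/free-period bookkeeping and the choice of $N_\alpha$, $\sigma<2$ via the postcritical growth rate $\gamma_b<4$ matches Viana's argument, and citing \cite{V} for the transfer of expansion to $\alpha$-pseudo-orbits is exactly what the paper does.
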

Following \cite{BST}, in the whole paper, we shall fix
$$\eta=\frac{\log\sigma}{8\log 32}.$$
This choice of $\eta$ is only used once for proving the first Claim in Proposition \ref{prop:M_alpha}.

\section{Admissible curves}\label{sec:AC}

In this section, following previous works, we shall introduce the concept of {\em admissible curves}, which are images of horizontal curves under iteration of $F$. Then we shall study analytical properties of admissible curves and show that they are {\em nearly horizontal} but {\em non-flat}.

To begin with, let us give some frequently used notations. Given a curve $X:I\to \mbbr$ defined on some interval $I$, denote its graph by $\hX$, i.e. $\hX=\{(\theta,X(\theta)):\theta\in I\} $. Note that $X$ and $\hX$ are determined by each other, and by abusing terminology, both of them will be called curves. For $h:\hat{I}\to\hat{J}$, diffeomorphic, $I\subset \hat{I}$, $J=h(I)$ and $X:\hat{I}\to \mbbr$, $F(\hX|_I)$ denotes the graph of the curve defined on $J$ by $h(\theta)\mapsto f_1(\theta,X(\theta))$, $\theta\in I$.

Now we can specify the precise meaning of admissible curves in our situation.

\begin{defn}[Admissible Curve] An analytic function $X$ defined on some $\omega\in\mclp_0$ is called an admissible cuvrve, if there exist $y\in\hI_b$, $n\ge 1$ and $\omega_n\in\mclp_n$, such that for the horizontal curve $Y\equiv y$, $\hX=F^n(\hY|_{\omega_n})$.
%In other words, an admissible curve is an iterate image of a horizontal curve under $F^n$.
\end{defn}

\begin{rmk}
By definition, if $X$ is an admissible curve, then $F^n(\hX)$ splits into a union of admissible curves for each $n\in\mbbn$.
\end{rmk}

The main result of this section is:
\begin{prop} \label{prop:admissible curve}
There exist $l_0\in\mathbb{N}$ and $A>B>0$, such that when $\alpha$ is sufficiently small, any admissible curve $X:\omega\to\mbbr$ satisfies that

\begin{equation}\label{eqn:AB}
A\alpha\ge \sum_{i=1}^{l_0+1}|X^{(i)}(\theta)|\ge\sum_{i=1}^{l_0}|X^{(i)}(\theta)|\ge B\alpha, \quad \forall\theta\in \omega.
\end{equation}

\end{prop}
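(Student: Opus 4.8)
The plan is to prove the three estimates by induction on the number $n$ of iterations used to produce the admissible curve $X$, passing to the derivative of $X$ in the $\theta$-coordinate and comparing it with $\alpha$ times a function in the family $\mclt_\omega$ from Definition~\ref{def:mathcalT}. Write $\hX = F^n(\hY|_{\omega_n})$ with $Y\equiv y$. Differentiating the recursion $F^k(\theta,y)=(h^k(\theta),f_k(\theta,y))$ one sees that if $\hX = F(\hZ|_{\omega'})$ where $\hZ$ is the admissible curve obtained after $n-1$ steps, then $X(h(\theta)) = Q_b(Z(\theta)) + \alpha\phi(\theta)$, so writing $\tau=(h|_{\omega'})^{-1}$,
\[
X'(\theta') = \Bigl(Q_b'(Z(\tau(\theta')))\, Z'(\tau(\theta')) + \alpha\,\phi'(\tau(\theta'))\Bigr)\,\tau'(\theta').
\]
Since $|Q_b'|\le 4$ on $\hI_b$, $|\tau'|\le\lambda_g^{-1}<\tfrac14$, and the higher derivatives of $\tau$ are controlled by (\ref{eqn:tau^(i)}), a routine induction shows that $X'/\alpha$ stays in a bounded family and in fact, in the limit, is represented by a series exactly of the form (\ref{eqn:functionT}): the coefficients $c_n$ arise as telescoping products of factors $Q_b'(Z_k)\,\tau_k'$, each of which is bounded by $4\cdot\lambda_g^{-1}$, and after collecting the $\lambda_g^{-1}$ into $\tau_n'$ itself the remaining factor is bounded by $4^{n-1}$, matching the constraint $|c_n|\le 4^{n-1}$ in Definition~\ref{def:mathcalT}. (One must be slightly careful at the first step, where the horizontal curve $Y\equiv y$ contributes a term $Q_b'(y)\,(\phi\circ\tau_1)'\cdot(\text{bounded})$; after pulling back $n-1$ more times the leading term $(\phi\circ\tau_1)'$ is already present with coefficient $1$ up to an error that is itself absorbed, so the identification with $\mclt_\omega$ is exact in the limit $n\to\infty$ and approximate, with an error $O(\alpha\cdot(4\lambda_g^{-1})^{n})$ that is $\ll\alpha$ once $n$ is moderately large, for finite $n$.)

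The upper bound $\sum_{i=1}^{l_0+1}|X^{(i)}|\le A\alpha$ is then immediate: differentiating $X=\alpha\cdot(\text{element of }\mclt_\omega\text{ up to small error})$ and invoking (\ref{eqn:A_n}) of Lemma~\ref{lem:mathcalT} gives $|X^{(i)}(\theta)|\le \alpha(A_i + o(1))$ for $1\le i\le l_0+1$, so $A:=2\sum_{i=1}^{l_0+1}A_i$ works for $\alpha$ small. The chain of inequalities $\sum_{i=1}^{l_0+1}\ge\sum_{i=1}^{l_0}\ge B\alpha$ between the two sums is trivial since we are adding nonnegative terms; the only real content on the lower side is $\sum_{i=1}^{l_0}|X^{(i)}(\theta)|\ge B\alpha$. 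For this I take $l_0$ and $B$ from assertion (ii) of Lemma~\ref{lem:mathcalT}. Since $X'/\alpha$ is, up to an error of size $o(1)$ uniformly on $\omega$, a function $T\in\mclt_\omega$, and $T$ together with its first $l_0-1$ derivatives has $\sum_{i=0}^{l_0-1}|T^{(i)}|\ge 2B$ pointwise by (\ref{eqn:B}), we get $\sum_{i=1}^{l_0}|X^{(i)}(\theta)|\ge \alpha\bigl(\sum_{i=0}^{l_0-1}|T^{(i)}(\tfrac{\theta}{\cdot})| - o(1)\bigr)\ge \alpha(2B - o(1))\ge B\alpha$ for $\alpha$ small.

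The main obstacle is the bookkeeping that makes the identification "$X'/\alpha$ is close to an element of $\mclt_\omega$" precise and \emph{uniform}: one has to check that the coefficients generated by the $Q_b'$ factors really do obey $|c_n|\le 4^{n-1}$ (this is where the constant $4$ and the condition $\lambda_g>4$, hence $m_1\ge m_0$, are used — $|Q_b'|\le 4$ is exactly the budget available), that the error terms from truncating at finite $n$ are $o(\alpha)$ \emph{with constants independent of which admissible curve and which $\omega_n$ we chose} (here the finiteness of $\mclp_0$ and the uniform derivative bounds (\ref{eqn:tau^(i)}) are essential), and that the contribution of the initial horizontal segment $Y\equiv y$ — whose "derivative" is $0$ — does not spoil the leading $(\phi\circ\tau_1)'$ term. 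I would organize this as: (1) set up the one-step derivative recursion as above; (2) prove by induction a uniform bound $|X^{(i)}(\theta)|\le A_i'\alpha$ on all derivatives up to order $l_0+1$, using Cauchy estimates on $B_\xi(\omega)$ and (\ref{eqn:tau^(i)}); (3) show the difference between $X'/\alpha$ and the partial sum $(\phi\circ\tau_1)' + \sum_{n=2}^{N}c_n(\phi\circ\tau_n)'$ tends to $0$ as the curve is iterated, with $c_n$ satisfying the required bounds, so that any limit lies in $\mclt_\omega$ and finite curves are within $o(1)$ of $\mclt_\omega$ in the $C^{l_0}$ norm on $\omega$; (4) conclude the upper bound from (\ref{eqn:A_n}) and the lower bound from (\ref{eqn:B}). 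Step (3) is the crux; once it is in place, steps (2) and (4) are routine.
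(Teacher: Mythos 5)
Your overall architecture coincides with the paper's: approximate $X'$ by $\alpha T$ with $T\in\mclt_\omega$, then get the upper bound in (\ref{eqn:AB}) from (\ref{eqn:A_n}) and the lower bound from (\ref{eqn:B}). In the paper this approximation is exactly the content of Lemma \ref{lem:F-invariance}, which gives $\|(X'-\alpha T)^{(i)}\|\le C_i\alpha^2$ for $0\le i\le l_0$, after which the proposition is a three-line consequence. Your proposal chooses to re-derive that approximation, and this is where there is a genuine gap.

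Concretely: when you unroll the one-step recursion, the quantities multiplying $(\phi\circ\tau_n)'$ are products of factors $Q_b'(Z_k(\tau(\cdot)))$, which are \emph{functions} of $\theta$, not real constants, so the resulting series is not of the form (\ref{eqn:functionT}) --- not even ``exactly in the limit $n\to\infty$'', contrary to what you assert. To land in $\mclt_\omega$ one must freeze each factor at a point, $D=Q_b'(X(\tau\theta_0))$, and control the resulting error; this is precisely the term $-2(X-y_0)\cdot X'$ in the paper's identity (\ref{eqn:0-th derivative}), which is $O(\alpha^2)$ because admissible curves satisfy $\|X'\|=O(\alpha)$ (a bound that has to be carried along in the induction), and the contraction $\|\tau'\|\le\lambda_g^{-1}$ with $\lambda_g>4>|Q_b'|$ prevents this error from accumulating, so the same constants $C_i$ work for all $n$. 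Your bookkeeping instead attributes the error to truncation and to the initial horizontal step, estimating it as $O\brac{\alpha(4\lambda_g^{-1})^n}$, ``small once $n$ is moderately large''. That is both the wrong source and an insufficient bound: the proposition must hold for \emph{every} admissible curve, i.e.\ uniformly in $n\ge 1$, and your final inequality $\sum_{i=1}^{l_0}|X^{(i)}|\ge\alpha(2B-o(1))$ requires an error that is $o(\alpha)$ as $\alpha\to 0$ uniformly over curves (the paper gets $O(\alpha^2)$), not one that merely decays in $n$; for $n$ of moderate size, $(4/\lambda_g)^{n}$ is a fixed constant independent of $\alpha$, and nothing you state rules out its being larger than $B$. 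The same uniform control is needed for the derivatives of $X'-\alpha T$ up to order $l_0$, which the paper obtains through the inductive scheme of Lemma \ref{lem:F-invariance} using (\ref{eqn:tau^(i)}) and (\ref{eqn:A_n}); Cauchy estimates on $B_\xi(\omega)$ alone do not provide it, because the error must be measured against $\alpha T$ with the coefficients frozen consistently at each stage. Once your step (3) is replaced by the statement and inductive proof of Lemma \ref{lem:F-invariance}, the remainder of your argument (choosing $l_0$ and $B$ from Lemma \ref{lem:mathcalT}(ii), $A$ from (\ref{eqn:A_n}), and taking $\alpha$ small) is exactly the paper's proof.
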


To prove Proposition \ref{prop:admissible curve}, the basic idea is to approximate the derivative of admissible curves by functions in the family $\mclt$, which is guaranteed by:
\begin{lem}\label{lem:F-invariance}
For each $l\in\mbbn$, there exist $C_i>0,i=0,1,\dots,l$, such that the following statement holds when $\alpha$ is small enough. Let $\omega\in\mclp_0$ and let $X:\omega\to \hI_b$ be an admissible curve. Then there exists $T\in\mclt_\omega$, such that
\begin{equation}\label{eqn:C_i alpha^2}
|(X'-\alpha T)^{(i)}(\theta)|\le C_i\alpha^2, \quad\forall \theta\in\omega, \forall 0\le i\le l .
\end{equation}
Moreover, for each $\omega_1\in\mclp_1$ with $\omega_1\subset\omega$ and each $\theta_0\in h(\omega_1)$, if we denote $\hX_1=F(\hX|_{\omega_1})$, $\tau=(h|_{\omega_1})^{-1}$ and

\begin{equation}\label{eqn:T_1}
  T_1=(\phi\circ\tau)'+Q_b'(X(\tau\theta_0))\cdot T\circ\tau\cdot \tau'\quad\mbox{on}\quad h(\omega_1),
\end{equation}
then $T_1\in\mclt_{h(\omega_1)}$ and (\ref{eqn:C_i alpha^2}) still holds when $(X,T,\omega)$ is replaced by $(X_1,T_1,h(\omega_1))$.
\end{lem}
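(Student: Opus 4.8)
The plan is to proceed by induction on the number of iterates $n$ used to produce the admissible curve $X$. First I would set up the induction carefully: an admissible curve $X$ on $\omega\in\mclp_0$ arises as $\hX = F^n(\hY|_{\omega_n})$ for a horizontal curve $Y\equiv y$. The base case $n=1$: here $X(\theta) = Q_b(y) + \alpha\phi(\tau(\theta))$ where $\tau = (h|_{\omega_1})^{-1}$ and $\omega_1=\omega_n$. Then $X'(\theta) = \alpha(\phi\circ\tau)'(\theta)$ \emph{exactly}, so taking $T = (\phi\circ\tau)'$ (which lies in $\mclt_\omega$ by Definition \ref{def:mathcalT}, with all $c_n = 0$), the left-hand side of (\ref{eqn:C_i alpha^2}) is literally zero. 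For the inductive step, I would assume that $X$ on $\omega$ satisfies (\ref{eqn:C_i alpha^2}) with some $T\in\mclt_\omega$, and that $\sup_\omega |X| \le \sqrt{2b} = $ (the bound coming from $\hI_b$, ensured for small $\alpha$), then analyze $\hX_1 = F(\hX|_{\omega_1})$ for $\omega_1\subset\omega$ in $\mclp_1$.

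The key computation in the inductive step is the chain rule. By definition $X_1(h(\theta)) = Q_b(X(\theta)) + \alpha\phi(\theta)$ for $\theta\in\omega_1$, so writing $\tau = (h|_{\omega_1})^{-1}$ we get
\begin{equation}\label{eqn:Fplan1}
X_1'(\eta) = \alpha(\phi\circ\tau)'(\eta) + Q_b'\bigl(X(\tau\eta)\bigr)\cdot X'(\tau\eta)\cdot\tau'(\eta), \quad \eta\in h(\omega_1).
\end{equation}
Now I substitute $X' = \alpha T + (X'-\alpha T)$ and, crucially, replace the $\theta$-dependent factor $Q_b'(X(\tau\eta))$ by the constant $Q_b'(X(\tau\theta_0))$ appearing in (\ref{eqn:T_1}), so that
\begin{equation}\label{eqn:Fplan2}
X_1' - \alpha T_1 = \alpha\bigl(Q_b'(X(\tau\eta)) - Q_b'(X(\tau\theta_0))\bigr) T\circ\tau\cdot\tau' + Q_b'(X(\tau\eta))\cdot(X'-\alpha T)\circ\tau\cdot\tau'.
\end{equation}
The second term on the right is $O(\alpha^2)$ in $C^l$ norm by the inductive hypothesis (\ref{eqn:C_i alpha^2}), using that $\tau$ has uniformly $C^l$-bounded derivatives with $|\tau'|\le\lambda_a^{-1}$ by Lemma \ref{lem:expandingcoor}(ii), and that $Q_b'$ is a polynomial evaluated on the bounded set $\hI_b$ along a curve $X$ with bounded derivatives. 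For the first term: since $X'$ itself is $O(\alpha)$ (which follows from the inductive bound plus the uniform bound $|T|\le A_1$ from Lemma \ref{lem:mathcalT}(i)), we have $|X(\tau\eta) - X(\tau\theta_0)| = O(\alpha)$, hence $Q_b'(X(\tau\eta)) - Q_b'(X(\tau\theta_0)) = O(\alpha)$, and the whole first term is $O(\alpha^2)$ in $C^l$; here I also use that differentiating this difference in $\eta$ keeps it $O(\alpha)$ because every $\eta$-derivative either hits the already-$O(\alpha)$ difference or hits a factor with bounded derivatives.

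It remains to check $T_1\in\mclt_{h(\omega_1)}$, which is where the coefficient bound $|c_n|\le 4^{n-1}$ in Definition \ref{def:mathcalT} enters. Writing the given $T = (\phi\circ\tau_1)' + \sum_{n\ge 2} c_n(\phi\circ\tau_n)'$ with its associated sequence $\{\omega_n\}$, and denoting $D := Q_b'(X(\tau\theta_0))$, I substitute into (\ref{eqn:T_1}) and use the identity $(\phi\circ\tau_n)'\circ\tau\cdot\tau' = (\phi\circ\tau_n\circ\tau)' = (\phi\circ\tilde\tau_{n+1})'$ where $\tilde\tau_{n+1} = \tau_n\circ\tau$ is the branch of $h^{-(n+1)}$ along the prolonged sequence $\{h(\omega_1), \omega_1, \omega_2,\dots\}$. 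This yields $T_1 = (\phi\circ\tilde\tau_1)' + \sum_{n\ge 2}\tilde c_n(\phi\circ\tilde\tau_n)'$ with $\tilde\tau_1 = \tau$, $\tilde c_2 = D$, and $\tilde c_n = D\,c_{n-1}$ for $n\ge 3$. So I must verify $|\tilde c_n|\le 4^{n-1}$: for $n=2$, $|\tilde c_2| = |D| = |Q_b'(X(\tau\theta_0))| \le 4 = 4^{2-1}$ since $|Q_b'|\le 4$ on $\hI_b$; for $n\ge 3$, $|\tilde c_n| = |D|\,|c_{n-1}| \le 4\cdot 4^{n-2} = 4^{n-1}$. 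This closes the induction. I expect the main obstacle to be purely bookkeeping: tracking the $C^l$-norm estimates through the chain rule in (\ref{eqn:Fplan2}) and making sure all the "$O(\alpha)$ with bounded derivatives" claims are uniform over $\omega_1$, which requires invoking the uniform finiteness of $\mclp_0,\mclp_1$, the uniform bounds $D_{\xi,i}$ on $\tau$-derivatives from Lemma \ref{lem:expandingcoor}(iii), and the uniform bounds $A_i$ on $\mclt$ from Lemma \ref{lem:mathcalT}(i); once $\alpha$ is small enough that $X$ stays in $\hI_b$ (so $Q_b$ and its derivatives are controlled along $X$), everything is routine.
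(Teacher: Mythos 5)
Your plan follows the paper's proof in all structural respects: induction on the number $n$ of iterates generating the admissible curve, the exact base case $n=1$ with $X'=\alpha(\phi\circ\tau_0)'$, a decomposition of $X_1'-\alpha T_1$ that is algebraically identical to the paper's identity $X_1'-\alpha T_1=[D\cdot(X'-\alpha T)-2(X-y_0)X']\circ\tau\cdot\tau'$ with $D=Q_b'(X(\tau\theta_0))$, and the verification that $T_1\in\mclt_{h(\omega_1)}$ via the shifted coefficients $\tilde c_2=D$, $\tilde c_{n+1}=Dc_n$ together with $|D|\le 4$ (this last check you actually spell out more fully than the paper does; only a small slip there: the prolonged sequence consists of $h(\omega_1),\omega_1,\tau_1(\omega_1),\tau_2(\omega_1),\dots$, not of the original $\omega_2,\omega_3,\dots$ themselves, though your branches $\tau_n\circ\tau$ are the correct ones).

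There is, however, a genuine gap in the analytic step, which you set aside as ``purely bookkeeping.'' The induction is over an unbounded number of steps $n$, so it is not enough to argue ``the hypothesis gives $O(\alpha^2)$, hence the output is $O(\alpha^2)$'' with unspecified constants: if one pass through the step enlarges the constant by any factor $>1$, the uniform bound (\ref{eqn:C_i alpha^2}) is lost. One must fix $C_0,\dots,C_l$ in advance and show the step reproduces them \emph{exactly}, and this closes only because the coefficient multiplying the inherited error $(X'-\alpha T)^{(j)}\circ\tau\cdot(\tau')^{j+1}$ is at most $|D|\cdot\|\tau'\|^{j+1}\le 4\lambda_g^{-j-1}<1$; the paper exploits this by taking, e.g., $C_0=\frac{3A_0^2}{\lambda_g-4}$ and then running an inner induction on the order $j$ of the derivative, absorbing the lower-order terms (which involve only the already-fixed $C_0,\dots,C_{j-1}$) into $C_j=2M_j+A_0A_j$. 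This is precisely where the standing hypothesis $\lambda_g=\lambda_a^{m_1}>4$, i.e. the choice of $m_0$ in (\ref{eqn:m_0}), is used. Your plan never invokes this contraction; indeed you quote only $|\tau'|\le\lambda_a^{-1}$ from Lemma \ref{lem:expandingcoor}(ii), and since $\lambda_a\le 2$ that gives $|D|\cdot\|\tau'\|\le 4/\lambda_a\ge 2$, with which the induction on $n$ cannot close. The bound you need is $\|\tau'\|\le\lambda_g^{-1}<\tfrac14$ for the branch $\tau$ of $h^{-1}=h_0^{-m_1}$ (Lemma \ref{lem:expandingcoor}(ii) applied $m_1$ times), combined with a fixed-in-advance, self-consistent choice of the constants $C_i$ as above.
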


\begin{proof} By definition, $X$ is the $F^n$ image of a constant curve $Y\equiv y$ for some $n\ge 1$ and $y\in\hI_b$. When $n=1$, $X=\alpha\phi\circ\tau_0+Q_b(y)$ for some inverse branch $\tau_0$ of $h^{-1}$ defined on $\omega$. Since $T:=(\phi\circ\tau_0)'\in\mclt_\omega$ and $X'=\alpha T$, (\ref{eqn:C_i alpha^2}) holds automatically in this case. To prove the lemma in full generality, by induction on $n$, it suffices to prove the following statement: for each $l\in\mbbn$, there exist $C_i>0,i=0,1,\dots,l$, such that if $X'-\alpha T$ satisfies (\ref{eqn:C_i alpha^2}), then for $T_1$ defined in (\ref{eqn:T_1}), $T_1\in\mclt_{h(\omega_1)}$ and
\begin{equation}\label{eqn:C_i alpha^2 for X_1}
|(X_1'-\alpha T_1)^{(i)}(\theta)|\le C_i\alpha^2, \quad\forall \theta\in h(\omega_1), \forall 0\le i\le l.
\end{equation}

For the $\theta_0$ appearing in the statement of the lemma, denote $y_0=X(\tau\theta_0)$ and $D=Q_b'(y_0)=-2y_0$. Since $|D|\le 4$, by the definition of $\mclt$, obviously $T_1\in\mclt_{h(\omega_1)}$. Because

$$X_1(\theta)=\alpha\phi(\tau\theta)+Q_b(X(\tau\theta)),\quad\forall \theta\in h(\omega_1),$$
it follows that

\begin{equation}\label{eqn:0-th derivative}
X_1'-\alpha T_1=[D\cdot(X'-\alpha T)-2(X-y_0)\cdot X']\circ\tau\cdot \tau'\quad\mbox{on}\quad h(\omega_1).
\end{equation}

To complete the proof, we only need to show that $X_1'-\alpha T_1$ satisfies (\ref{eqn:C_i alpha^2 for X_1}). To this end, let us show the existence of $C_i$ inductively on $i$.  Firstly, when $i=0$, by (\ref{eqn:0-th derivative}),

$$\|X_1'-\alpha T_1\|\le |D|\cdot\|X'-\alpha T\|\cdot\|\tau'\|+2\|X'\|^2\cdot\|\tau'\|.$$
Here and below $\|\cdot\|$ denotes the maximum modulus norm of functions.
Note that $|D|\le 4$, $\|\tau'\|\le\lambda_g^{-1}$, $\|T\|\le A_0$ (by (\ref{eqn:A_n})) and $\|X'\|\le \|X'-\alpha T\|+\alpha\|T\|$. Therefore,

$$\|X'-\alpha T\|\le C_0\alpha^2\quad\Rightarrow\quad\|X_1'-\alpha T_1\|\le\lambda_g^{-1}[4C_0+2(C_0\alpha+A_0)^2]\alpha^2.$$
It follows that, if we choose $C_0$ large, say $C_0=\frac{3A_0^2}{\lambda_g-4}$, then when $\alpha$ is small, $X_1'-\alpha T_1$ satisfies (\ref{eqn:C_i alpha^2}) for $i=0$.

Now assume that for some $1\le j\le l$, $C_0,C_1,\dots,C_{j-1}$ have been chosen, so that $X_1'-\alpha T_1$ satisfies (\ref{eqn:C_i alpha^2 for X_1}) for $0\le i\le j-1$. Let us determine the choice of $C_j$. Differentiating (\ref{eqn:0-th derivative}) $j$ times, we obtain that

\begin{equation}\label{eqn:i-th derivative}
(X_1'-\alpha T_1)^{(j)}=[D\cdot(X'-\alpha T)^{(j)}-2(X-y_0)\cdot X^{(j+1)}]\circ\tau\cdot (\tau')^{j+1}+P_j+Q_j.
\end{equation}
Here $P_j$ is a linear combination of $(X'-\alpha T)^{(i)}\circ\tau$, $0\le i\le j-1$, and $Q_j$ is a homogeneous quadratic polynomial of  $(X-y_0)^{(j)}\circ\tau$, $0\le i\le j$. For both $P_j$ and $Q_j$, their coefficients are polynomials of $\tau^{(i)}$, $1\le i\le j+1$. By (\ref{eqn:tau^(i)}), $\|\tau^{(i)}\|\le D_{\xi,i}$. By (\ref{eqn:A_n}), $\|T^{(i)}\|\le A_i$. Moreover, $\|(X'-\alpha T)^{(i)}\|\le C_i\alpha^2$, $i=0,\dots,j-1$.
Therefore, on the one hand, there exists $M_j>0$, %determined by $A_i,C_i$, $0\le i\le j-1$,
such that

$$\|P_j\|+\|Q_j\|\le M_j\alpha^2.$$
On the other hand,

$$\|X-y_0\|\le\|X'\|\le A_0\alpha+C_0\alpha^2 \quad\mbox{and}\quad\|X^{(j+1)}\|\le \|(X'-\alpha T)^{(j)}\|+\alpha A_j.$$
Substituting the inequalities above into  (\ref{eqn:i-th derivative}), we can conlude that if $\|(X'-\alpha T)^{(j)}\|\le C_j\alpha^2$, then

$$\|(X_1'-\alpha T_1)^{(j)}\|\le \lambda_g^{-j-1}[4C_j+2(A_0+C_0\alpha)(A_j+C_j\alpha)]\alpha^2+M_j\alpha^2.$$
As a result, if we choose $C_j$ large, say $C_j=2M_j+A_0A_j$, then when $\alpha$ is small, (\ref{eqn:C_i alpha^2}) holds for $X_1'-\alpha T_1$ with $i=j$, which completes the induction, and hence the lemma follows.
\end{proof}

Now we can deduce Proposition \ref{prop:admissible curve} from Lemma \ref{lem:mathcalT} and Lemma \ref{lem:F-invariance}.

\begin{proof}[Proof of Proposition~\ref{prop:admissible curve}]
Let $l_0$,  $B$ be as in Lemma \ref{lem:mathcalT}, let $A=2\sum_{i=0}^{l_0}A_i$, where $A_i$'s are given in (\ref{eqn:A_n}), and let $C_0,C_1, \ldots, C_{l_0}$ be determined by Lemma~\ref{lem:F-invariance} for $l=l_0$. Assume that $\alpha$ is so small that  $(l_0+1)C_i\alpha< \min \{A/2,B\}$, $i=0,1,\dots,l_0$ and that Lemma~\ref{lem:F-invariance} holds for $l=l_0$.

By Lemma \ref{lem:F-invariance}, there exists $T\in\mclt_\omega$, such that $X'-\alpha T$ satisfies (\ref{eqn:C_i alpha^2}) for $l=l_0$. Due to Lemma \ref{lem:mathcalT}, $T$ satisfies (\ref{eqn:A_n}) and (\ref{eqn:B}). Then the choice of constants in the previous paragraph guarantees that (\ref{eqn:AB}) holds for $X$.
\end{proof}

As a direct application of the non-flat property of admissible curves, we have the following control of recurrence to $\abs{y}\le\alpha$, which is an analogue of  \cite[Corollary 5.4]{BST}.

\begin{cor}\label{cor:return to alpha}
There exists $\epsilon_*>0$, such that if $\alpha$ is sufficiently small,  then for any admissible curve $X$ defined on $\omega\in\mathcal{P}_0$ and any $0<\epsilon\le\epsilon_*$, we have:
\begin{equation}\label{eqn:admcurnonflat}
\abs{\Brac{\theta\in\omega:\abs{X(\theta)}\le\alpha\epsilon}}\le\epsilon^{\frac{1}{2l_0}}.
\end{equation}

\end{cor}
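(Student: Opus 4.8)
The plan is to exploit the non-flatness of admissible curves established in Proposition~\ref{prop:admissible curve}, which asserts that $\sum_{i=1}^{l_0}|X^{(i)}(\theta)|\ge B\alpha$ uniformly on $\omega$, together with an upper bound on $X^{(l_0+1)}$ coming from the same proposition. In words: at every point $\theta\in\omega$, at least one of the first $l_0$ derivatives of $X$ is bounded below (in absolute value) by a definite fraction of $\alpha$, while all derivatives up to order $l_0+1$ are bounded above by $O(\alpha)$. This is exactly the hypothesis of a standard ``quantitative non-flatness'' sublevel-set estimate: a one-variable function whose first $l_0$ derivatives do not simultaneously vanish (and which has controlled higher regularity) cannot spend much measure in a small neighborhood of a level.

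First I would rescale: write $X=\alpha \Xi$ on $\omega$, so that by Proposition~\ref{prop:admissible curve} we have $\sum_{i=1}^{l_0}|\Xi^{(i)}(\theta)|\ge B$ and $\sum_{i=1}^{l_0+1}|\Xi^{(i)}(\theta)|\le A$ for all $\theta\in\omega$, with $A,B,l_0$ independent of the admissible curve and of $\alpha$ (once $\alpha$ is small). Then $\{\theta\in\omega:|X(\theta)|\le\alpha\epsilon\}=\{\theta\in\omega:|\Xi(\theta)|\le\epsilon\}$, and the problem is reduced to a statement purely about the fixed-regularity function $\Xi$ with no $\alpha$ in sight. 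Note also that $\omega$ ranges over the finite Markov partition $\mclp_0$, so its length is bounded above; the bound $\epsilon^{1/(2l_0)}$ on the right-hand side of \eqref{eqn:admcurnonflat} should absorb the length of $\omega$ into the choice of $\epsilon_*$.

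Second, I would invoke (or prove by a short induction on $l_0$) the classical sublevel-set lemma: if $\Xi\in C^{l_0+1}$ on an interval with $\sum_{i=1}^{l_0}|\Xi^{(i)}|\ge B$ and $\|\Xi^{(i)}\|\le A$ for $1\le i\le l_0+1$, then $|\{|\Xi|\le\epsilon\}|\le C(A,B,l_0)\,\epsilon^{1/l_0}$. The inductive mechanism is: at each point some $\Xi^{(i)}$ is $\ge B/l_0$ in modulus; if $i=1$ then $\Xi$ is monotone with derivative bounded below on a subinterval, giving a linear (hence $\epsilon$-type) bound there; if $i\ge 2$, then on the set where $|\Xi^{(i)}|\ge B/(2l_0)$ the function $\Xi'$ (whose first $i-1$ derivatives are similarly non-degenerate) already satisfies the inductive hypothesis with exponent $1/(i-1)$, and one integrates; combining over the finitely many ``$i$-dominant'' pieces, with careful bookkeeping of how the set where $|\Xi|\le\epsilon$ meets a set where $|\Xi^{(i)}|\gtrsim 1$ has measure $\lesssim\epsilon^{1/i}\le\epsilon^{1/l_0}$, yields the claim. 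One then chooses $\epsilon_*$ small enough that $C(A,B,l_0)\,\epsilon^{1/l_0}\le\epsilon^{1/(2l_0)}$ for all $0<\epsilon\le\epsilon_*$, which is possible since $\epsilon^{1/l_0}=\epsilon^{1/(2l_0)}\cdot\epsilon^{1/(2l_0)}$ and the extra factor $\epsilon^{1/(2l_0)}\to0$.

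I expect the main obstacle to be making the sublevel-set lemma itself clean and self-contained in the right generality, since the paper has not stated it; one has to phrase the induction carefully so that ``at each point at least one of $\Xi',\dots,\Xi^{(l_0)}$ is large'' (rather than one fixed derivative being large everywhere) is genuinely enough — the set $\omega$ must be partitioned according to which derivative dominates, and these pieces are a priori only measurable, not intervals, so one works instead with the open sets $U_i=\{|\Xi^{(i)}|>B/(2l_0^2)\}$ which cover $\omega$ and are finite unions of intervals by the $C^{l_0+1}$ bound. The rest (rescaling, handling that $\omega$ lies in a finite partition, and choosing $\epsilon_*$) is routine. Since \cite{BST} proves the analogous Corollary~5.4 and this estimate is entirely classical, I would most likely cite it and only indicate the minor adaptation, keeping the written proof short.
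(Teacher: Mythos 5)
Your proposal follows essentially the same route as the paper: Proposition~\ref{prop:admissible curve} supplies the non-flatness, one localizes to pieces of $\omega$ on which a single derivative $X^{(i)}$, $1\le i\le l_0$, is bounded below by a fixed multiple of $\alpha$, applies the classical one-derivative sublevel estimate (the paper cites \cite[Lemma 5.3]{BST} for exactly this), and uses the slack between the exponents $\frac{1}{l_0}$ and $\frac{1}{2l_0}$ to absorb all constants, including $|\omega|\le|I_a|$, into the choice of $\epsilon_*$; the rescaling $X=\alpha\Xi$ is cosmetic.

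The one step whose justification does not hold as written is the covering. The sets $U_i=\{|\Xi^{(i)}|>B/(2l_0^2)\}$ are open, but a $C^{l_0+1}$ bound does not make them finite unions of intervals (bounded derivatives are perfectly compatible with infinitely many crossings of a level), and, more to the point, you need the number of pieces to be bounded uniformly over all admissible curves and all small $\alpha$, since the constant $C(A,B,l_0)$ must be independent of $X$ for one choice of $\epsilon_*$ to serve. The repair is exactly the ingredient you already recorded but did not use at this spot: if $|X^{(i)}(\theta_0)|\ge B\alpha/l_0$ and $\|X^{(i+1)}\|\le A\alpha$, then $|X^{(i)}|\ge B\alpha/(2l_0)$ on the whole interval of radius $\frac{B}{2Al_0}$ around $\theta_0$. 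This is how the paper proceeds: $\omega$ is split into at most $\frac{2Al_0|I_a|}{B}$ intervals of definite length, each carrying one dominant derivative, and then \cite[Lemma 5.3]{BST} is applied on each piece. With that substitution in place of the $U_i$ covering, your argument coincides with the paper's.
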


\begin{proof}
By (\ref{eqn:AB}), for any $\theta_0\in\omega$, there exists $1\le i\le l_0$ with $|X^{(i)}(\theta_0)|\ge B\alpha/l_0$. Since $\|X^{(i+1)}\|\le A\alpha$, it follows that if $|\theta-\theta_0|\le \frac{B}{2Al_0}$, then $|X^{(i)}(\theta)|\ge\frac{B\alpha}{2l_0}$. Therefore, we can divide $\omega$ into a disjoint union of intervals $J_j$, $j=1,2,\dots,m$ with the following properties:

\begin{itemize}
  \item $|J_j|\ge \frac{B}{2Al_0}$;
  \item there exists $1\le i_j\le l_0$, such that $|X^{(i_j)}(\theta)|\ge\frac{B\alpha}{2l_0},\forall \theta\in J_j$.
\end{itemize}
Then $m\le\frac{2Al_0|I_a|}{B}$, and by \cite[Lemma 5.3]{BST}, for any $1\le j\le m$ and any $\epsilon>0$,

$$\big|\big\{\theta\in J_j:\abs{X(\theta)}<\alpha\epsilon\big\}\big|<2^{i_j+1}\brac{\frac{2l_0\epsilon}{B}}^{\frac{1}{i_j}}.$$
Let $M=\max_{1\le j\le m}2^{i_j+1}\brac{\frac{2l_0}{B}}^{\frac{1}{i_j}}$. Then

$$\abs{\Brac{\theta\in\omega:\abs{X(\theta)}\le\alpha\epsilon}}\le mM\epsilon^{\frac{1}{l_0}}\le\frac{2Al_0|I_a|}{B}M\epsilon^{\frac{1}{l_0}},\ \forall \epsilon\in(0,1).$$
Choosing $\epsilon_*\in (0,1)$ with $\frac{2Al_0|I_a|}{B}M\epsilon_*^{\frac{1}{2l_0}}\le 1$, the conclusion follows.
\end{proof}

\section{Critical return}\label{sec:critical return}

The crucial ingredient in proving that $F$ is non-uniformly expanding is to control the approximation of a typical orbit to the critical set $I_a\times\{0\}$. For this purpose, we shall show that $F$ satisfies the so called \emph{slow recurrence conditions} in Proposition \ref{prop:slowrec}.  Following \cite{V,BST}, the key step to deduce these conditions is to prove Proposition \ref{prop:M_alpha}, which is an analogue of \cite[Lemma 2.6]{V} or \cite[Proposition 5.2]{BST}.

\subsection{A technical proposition}
To begin with, let us prove the following lemma as a substitution of \cite[Lemma 2.7]{V} or \cite[Lemma 5.5]{BST}.

\begin{lem}\label{lem:separate}
There exist $M_*\in\mbbn$ and $\epsilon_0 >0$, such that the following statement holds when $\alpha$ is sufficiently small. For each admissible curve $X$ defined on $\omega_0\in\mclp_0$, there exist $\omega^{\pm}\in\mclp_{M_*}$ with $\omega^{\pm}\subset\omega_0$ and $h^{M_*}(\omega^+)=h^{M_*}(\omega^-)$, such that for $\hZ^{\pm}=F^{M_*}(\hX|_{\omega^{\pm}})$,

\begin{equation}\label{eqn:separate}
   \sup_{\theta\in \mathrm{dom}(Z^{\pm})}|Z^+(\theta)-Z^-(\theta)|\ge \epsilon_0\alpha.
\end{equation}
\end{lem}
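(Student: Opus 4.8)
The plan is to exploit the non-flatness of admissible curves (Proposition \ref{prop:admissible curve}), together with assertion (iii) of Lemma \ref{lem:mathcalT}, which says that the two ``first-return-to-critical'' transforms of a function in $\mclt_{\omega_c}$ produce genuinely distinct functions in $\mclt_{h(\omega_c^\pm)}$. First I would show that under iteration of $F$ (or rather $h$) an admissible curve is eventually defined over an element of $\mclp_0$ that straddles the critical point $0$; more precisely, since $h=h_0^{m_1}$ is topologically exact on $I_a$ (inherited from topological exactness of $Q_a$), there is a bounded number of iterates $N_1=N_1(a)$ after which some piece $h^{N_1}(\hX|_{\omega'})$, $\omega'\in\mclp_{N_1}$, is an admissible curve defined on the element $\omega_c\in\mclp_0$ containing $0$. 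Write $\widehat{X}_* = F^{N_1}(\widehat{X}|_{\omega'})$ for that admissible curve on $\omega_c$.

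Next I would compare the two one-step pushforwards $\widehat{Z}^\pm_0 := F(\widehat{X}_*|_{\omega_c^\pm})$, both admissible curves defined on the same element $h(\omega_c^+)=h(\omega_c^-)\in\mclp_0$. By Lemma \ref{lem:F-invariance} applied to $X_*$ there is $T\in\mclt_{\omega_c}$ with $X_*' - \alpha T$ small of order $\alpha^2$ in the $C^{l_0+1}$ sense, and the lemma gives $(X_*'-\alpha T)^{(i)}$ small while identifying the ``candidate derivatives'' of $Z^\pm_0$ as $T_1^\pm := (\phi\circ\tau_c^\pm)' + Q_b'(X_*(\tau_c^\pm\theta_0^\pm))\cdot T\circ\tau_c^\pm\cdot(\tau_c^\pm)'$. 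Setting $D^\pm = Q_b'(X_*(\tau_c^\pm\theta_0^\pm))=-2X_*(\tau_c^\pm\theta_0^\pm)$, and noting $X_*(\tau_c^+\theta_0^+)$ and $X_*(\tau_c^-\theta_0^-)$ differ by $O(\alpha)$ from a common value (because $\tau_c^\pm$ collapse to $0$ near the critical point and $X_*$ is nearly horizontal with oscillation $O(\alpha)$), I would instead use the functions $T^\pm := (\phi' + D\cdot T)\circ\tau_c^\pm\cdot(\tau_c^\pm)'$ from (\ref{eqn:nonodd}) for a single choice $D\in[-4,4]$; here the odd-degree hypothesis on $\varphi$ enters, via Lemma \ref{lem:mathcalT}(iii), to guarantee $T^+\not\equiv T^-$. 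Since $\mclt_{h(\omega_c^\pm)}$ is compact and $\{(T,D)\}$ ranges over a compact set, $\inf\|T^+ - T^-\|_{C^0} =: 2\kappa > 0$ is a uniform positive constant; hence $\|Z^+_0{}' - Z^-_0{}'\| \ge 2\kappa\alpha - O(\alpha^2) \ge \kappa\alpha$ for $\alpha$ small. Integrating the derivative difference along a subinterval of fixed length (using that $\mathrm{dom}(Z^\pm_0)$ is an element of $\mclp_0$, of length bounded below) and subtracting off the constant of integration, one obtains two admissible curves on the same domain whose \emph{derivatives} differ in sup-norm by $\ge\kappa\alpha$; but I actually want the \emph{values} to differ. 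This is handled exactly as in \cite{V,BST}: either the values of $Z^+_0 - Z^-_0$ already differ by $\gtrsim\alpha$ somewhere, in which case we are done with $M_* = N_1+1$, or the value difference is uniformly $\ll\alpha$ while the derivative difference is $\ge\kappa\alpha$, and then one pushes both curves forward by a further bounded number of iterates staying in the expanding region $|y|\ge\sqrt\alpha$ (using Lemma \ref{lem:BE}(ii) to amplify the $y$-separation by the factor $C_*\sqrt\alpha\sigma^k$ until it reaches order $\alpha$ in absolute terms) to convert the derivative gap into a value gap of size $\ge\epsilon_0\alpha$; the number of extra iterates is bounded in terms of $\sigma,C_*,\kappa$, which fixes $M_*$ and $\epsilon_0$.

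The main obstacle I anticipate is the bookkeeping in the ``derivative gap $\Rightarrow$ value gap'' step: one must ensure that while iterating forward to amplify the separation, both curves $Z^\pm$ remain admissible curves defined on a \emph{common} element of the partition with $h^{M_*}(\omega^+)=h^{M_*}(\omega^-)$ — this forces us to track the inverse branches carefully and to only push forward along the ``non-critical'' part of each curve, which is possible because admissible curves are nearly horizontal and by Corollary \ref{cor:return to alpha} the portion with $|Z^\pm|\le\alpha\epsilon$ is tiny, so a definite-length subinterval on which $|Z^\pm|\ge\sqrt\alpha$ is available. A secondary subtlety is making the constant $\kappa$ genuinely uniform over all admissible curves and all relevant $\omega_c^\pm$, which rests on the compactness of $\mclt$ from Lemma \ref{lem:mathcalT} together with the finiteness of $\mclp_0,\mclp_1$; and one must double-check that replacing the true value $X_*(\tau_c^\pm\theta_0^\pm)$ by a single $D$ costs only $O(\alpha)$ in $\|T^+-T^-\|$, which follows since $\tau_c^\pm$ and their derivatives are uniformly bounded and $X_*$ has oscillation $O(\alpha)$.
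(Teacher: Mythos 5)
Your first two steps follow the paper's route: use topological exactness of $h$ to reach, within a uniformly bounded number $M_0$ of iterates, an admissible curve over $\omega_c$; then apply Lemma \ref{lem:F-invariance} twice and Lemma \ref{lem:mathcalT}(iii) plus compactness to get $\sup|(Z^+)'-(Z^-)'|\ge \kappa\alpha$ on the common domain $h(\omega_c^+)=h(\omega_c^-)\in\mclp_0$. (Your device of a single $D$ with an $O(\alpha)$ correction is workable; the paper avoids even that by evaluating at the common endpoint $0$ of $\omega_c^{\pm}$, so that $D=Q_b'(Y(0))$ serves both branches exactly.) The genuine gap is in your final ``derivative gap $\Rightarrow$ value gap'' step. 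The case you set aside --- value difference uniformly $\ll\alpha$ while the derivative difference is $\ge\kappa\alpha$ --- simply cannot occur, and noticing this is the whole point: by (\ref{eqn:AB}) every admissible curve satisfies $|(Z^{\pm})''|\le A\alpha$, so $W=Z^+-Z^-$ has $|W''|\le 2A\alpha$; at a point where $|W'|\ge\kappa\alpha$ the derivative of $W$ keeps size $\ge\kappa\alpha/2$ on an interval of length $\min\{\kappa/(4A),\ \ell_0/2\}$ inside the domain, where $\ell_0>0$ is the minimal length of the finitely many elements of $\mclp_0$; integrating then gives $\sup|W|\ge\epsilon_0\alpha$ with $\epsilon_0$ depending only on $\kappa$, $A$, $\ell_0$, and $M_*=M_0+1$. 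This is exactly how the paper concludes, with no further iteration.

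The fallback you propose for the (vacuous) bad case does not work as stated, so as written your proof is incomplete. First, pushing both curves forward cannot convert a derivative gap into a value gap: after one step the difference at matched points is $Q_b(Z^+(\tau\theta))-Q_b(Z^-(\tau\theta))$ (the coupling terms cancel), so the value gap after any number of iterates is proportional to the value gap before, and starting from a gap $\ll\alpha$ you would need $\sim\log(1/\alpha)$ steps even with the maximal factor $4$ per step. Second, Lemma \ref{lem:BE}(ii) gives the factor $C_*\sqrt{\alpha}\,\sigma^k$, which for a bounded number of iterates $k$ is smaller than $1$; it only amplifies once $k\gtrsim\log(1/\alpha)$. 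Either way the extra number of iterates would depend on $\alpha$, which is inadmissible: the lemma must produce a single integer $M_*$ independent of $\alpha$ (this is used later, e.g.\ via $t_{i+1}\ge t_i+M_*$ and the choice $\kappa>4^{M_*}$ in Proposition \ref{prop:M_alpha}), and the bookkeeping of keeping the two pieces over a common element of $\mclp_{M_*}$ after the extra iterates is left unresolved in your sketch. Replace that branch by the two-line $C^2$ argument above and the proof closes.
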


\begin{proof}
Recall the notations $\omega_c$ and $\omega_c^\pm$ introduced just ahead of Lemma \ref{lem:mathcalT}. Since $h$ is topologically exact on $I_a$, there exists $M_0\in\mbbn$, such that $h^{M_0}(\omega_0)=I_a$ for each $\omega_0\in\mclp_0$. In particular, given $\omega_0\in\mclp_0$, there exists $\omega\in\mclp_{M_0}$, such that $\omega\subset\omega_0$ and $h^{M_0}(\omega)=\omega_c$. Denote $F^{M_0}(\hX|_{\omega})$ by $\hY$ and denote $D=Q_b'(Y(0))$. Since $Y$ is admissible, according to Lemma \ref{lem:F-invariance}, there exists $T\in\mclt_{\omega_c}$,  such that $Y'-\alpha T$ satisfies (\ref{eqn:C_i alpha^2}) for $l=0$.

Let $\hZ^\pm= F(\hY|_{\omega_c^\pm})$. Then both of $Z^\pm$ are admissible curves defined on $h(\omega_c^\pm)$. Therefore, for $T^\pm\in\mclt_{h(\omega_c^\pm)}$ defined in (\ref{eqn:nonodd}) with $T$ and $D$ given in the previous paragraph, by applying Lemma \ref{lem:F-invariance} again, one can see that $(Z^\pm)'-\alpha T^\pm$ also satisfy (\ref{eqn:C_i alpha^2}) for $l=0$. According to (iii) of Lemma \ref{lem:mathcalT} and the compactness of $\mclt_{h(\omega_c^\pm)}$, there exists $\epsilon_1>0$, independent of $T^\pm$, such that

$$\sup_{\theta\in h(\omega_c^\pm)}|T^+(\theta)-T^-(\theta)|\ge 2\epsilon_1.$$
By (\ref{eqn:C_i alpha^2}), $\|(Z^\pm)'-\alpha T^\pm\|\le C_0\alpha^2$. Therefore, when $\alpha$ is sufficiently small, it follows that

$$\sup_{\theta\in h(\omega_c^\pm)}|(Z^+)'(\theta)-(Z^-)'(\theta)|\ge\epsilon_1\alpha.$$
Due to (\ref{eqn:AB}), $|(Z^{\pm})''|\le A\alpha$ on $h(\omega_c^\pm)$. Then it is easy to see that (\ref{eqn:separate}) holds for $M_*=M_0+1$ and suitable choice of $\epsilon_0$.
\end{proof}

The proposition below is the substitution of \cite[Lemma 2.6]{V} or \cite[Proposition 5.2]{BST} with the same idea of proof. We shall follow the proof in \cite{BST}. The main change of ingredients here is to replace \cite[Corollary 5.4]{BST} and \cite[Lemma 5.5]{BST} with Corollary \ref{cor:return to alpha} and Lemma \ref{lem:separate} in this paper respectively.

Denote

$$M_\alpha=\left[\frac{\log\frac{1}{\alpha}}{\log 32}\right] \quad\mbox{and}\quad r_0(\alpha)=\brac{\frac{1}{2}-2\eta}\log\frac{1}{\alpha}.$$
Note that for $N_\alpha$ introduced in Lemma \ref{lem:BE}, we have $M_\alpha\le \frac{2}{5}N_\alpha$.

\begin{prop}\label{prop:M_alpha}
There exists $\beta_0>0$, such that when $\alpha$ is sufficiently small, for each admissible curve $Y$ defined on $\omega_0\in\mclp_0$ and each $r\ge r_0(\alpha)$, we have
\begin{equation}\label{eqn:beta_0}
  \abs{\{\theta\in\omega_0:|f_{M_\alpha}(\theta,Y(\theta))|\le \sqrt{\alpha}e^{-r} \}}\le e^{-\beta_0 r}.
\end{equation}
\end{prop}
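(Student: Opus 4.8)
The strategy, following Viana and \cite{BST}, is an induction on time with a dichotomy between ``free'' iterates, where the $y$-coordinate stays bounded away from $0$ and one accumulates expansion at rate $\sigma$ from Lemma~\ref{lem:BE}(ii), and ``bound'' iterates that immediately follow a close return to the critical line, during which one loses control but recovers it through the building-expansion estimate of Lemma~\ref{lem:BE}(i). The quantity $M_\alpha\asymp \log(1/\alpha)$ is chosen precisely so that a single excursion to within $\sqrt\alpha$ of the critical line, followed by the recovery time $N_\alpha$, would already exhaust the whole time window; hence in the relevant time scale at most one deep return is possible, and the combinatorics stay manageable. I would first fix the admissible curve $Y$ on $\omega_0$, partition $\omega_0$ into the (finitely many, by Corollary~\ref{cor:return to alpha}) pieces according to the first time $k\le M_\alpha$, if any, at which the orbit of $(\theta,Y(\theta))$ enters $\{|y|\le \delta_*\}$, and then push forward by $F^k$ so that the image is again an admissible curve (or a finite union of them) sitting near the critical line.

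\textbf{Key steps.} First I would set up the branching/itinerary bookkeeping: for $\theta\in\omega_0$, record the sequence of return times $0<n_1<n_2<\cdots\le M_\alpha$ to the ``critical region'' $\{|y|\le\delta_*\}$, and the corresponding depths $r_j=-\log(|y_{n_j}|/\sqrt\alpha)$ when that depth is positive. Using Corollary~\ref{cor:return to alpha} applied to the admissible curve $F^{n_j}(\hat Y|_{\cdot})$, the measure of the set of $\theta$ with a return of depth $\ge r$ at a prescribed time is at most $(\text{const}\cdot e^{-r})^{1/(2l_0)}$; summing over the at most $M_\alpha$ possible times gives a factor polynomial in $\log(1/\alpha)$, which is absorbed because we only ever need $r\ge r_0(\alpha)\asymp \log(1/\alpha)$. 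Second, between consecutive returns I would use Lemma~\ref{lem:BE}(ii) to get $|\partial_y f|\ge C_*\sqrt\alpha\,\sigma^{(\text{free length})}$, and after a deep return of depth $r_j$ I would use Lemma~\ref{lem:BE}(i) to recover the $y$-derivative at cost controlled by $\alpha^{-1+\eta}$; the choice $\eta=\tfrac{\log\sigma}{8\log 32}$ is exactly what makes the net exponent of $\alpha$ favorable over the window of length $M_\alpha$. Third — and this is where Lemma~\ref{lem:separate} enters — to convert a lower bound on $|\partial_y f_{M_\alpha}|$ into the desired \emph{measure} estimate \eqref{eqn:beta_0}, one needs that the curve $\theta\mapsto f_{M_\alpha}(\theta,Y(\theta))$ is not too flat; I would invoke Lemma~\ref{lem:separate} to split $\omega_0$ into pieces on which two iterates $Z^{\pm}$ of the curve are $\epsilon_0\alpha$-separated, which forces a uniform lower bound on some low-order $\theta$-derivative of the curve, and then apply the quantitative transversality estimate \cite[Lemma 5.3]{BST} exactly as in Corollary~\ref{cor:return to alpha} to bound the sublevel set $\{|f_{M_\alpha}|\le \sqrt\alpha e^{-r}\}$ by a power of $e^{-r}$.

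\textbf{Main obstacle.} The delicate point is the bookkeeping that shows a deep return cannot be ``hidden'' and amortized over too many free iterates: one must verify that the total expansion accumulated up to time $M_\alpha$ — taking into account the loss $\alpha^{-1+\eta}$ at each deep return and the possibility of several shallow returns — still dominates $e^{\beta_0 r}$ for \emph{every} $r\ge r_0(\alpha)$, with a single $\beta_0>0$ independent of $\alpha$. Concretely, if a return of depth $r$ occurs at time $n_j$, the orbit needs roughly $r/\log\sigma$ free iterates afterward to recover, and since $n_j+r/\log\sigma$ may exceed $M_\alpha$ one is forced into a careful case analysis (return before vs.\ after time $M_\alpha - cr$), exactly the analysis carried out for \cite[Proposition 5.2]{BST}; the inequality $M_\alpha\le\frac{2}{5}N_\alpha$ is what guarantees enough room. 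I expect the proof to proceed by first establishing a ``bound-recovery'' claim (controlling the derivative from a deep return up to the first subsequent free time where $|y|$ re-enters $[\sqrt\alpha,\delta_*]$), then feeding this into a second claim that estimates the bad set time-by-time, and finally summing the geometric series in $r$ and the polynomial-in-$\log(1/\alpha)$ number of itineraries against the surplus exponential expansion.
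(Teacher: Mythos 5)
Your sketch gets the easy range right (for $r\ge(\tfrac12+2\eta)\log\tfrac1\alpha$ the threshold $\sqrt\alpha e^{-r}\le\alpha^{1+2\eta}$ is below $\alpha$, and Corollary \ref{cor:return to alpha} plus Lemma \ref{lem:h-distortion} indeed suffice, which is how the paper opens its proof), but the mechanism you propose for the main range $r\approx r_0(\alpha)$ does not work. There $\sqrt\alpha e^{-r_0(\alpha)}=\alpha^{1-2\eta}\gg\alpha$, while an admissible curve satisfies $|X'|\le A\alpha$, so its total vertical variation is only $O(\alpha)$: the non-flatness of Proposition \ref{prop:admissible curve} and the transversality estimate of \cite[Lemma 5.3]{BST} can only exclude sublevel sets at scales $\le \alpha\epsilon_*$, and an entire admissible piece of $F^{M_\alpha}(\hY)$ can perfectly well lie inside $\{|y|\le\alpha^{1-2\eta}\}$. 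Likewise, Lemma \ref{lem:separate} does not yield ``a uniform lower bound on some low-order $\theta$-derivative of the curve'': it separates two \emph{different branches} $Z^{\pm}$ over the same base interval by $\epsilon_0\alpha$, again far below the scale $\alpha^{1-2\eta}$ you must beat. So the step of your plan that converts derivative information into the measure bound \eqref{eqn:beta_0} is missing exactly in the range of $r$ the proposition is about. In addition, your free/bound itinerary with depths $r_j$ and recovery via Lemma \ref{lem:BE}(i) is a red herring here: if some point of the curve satisfies $|f_{M_\alpha}|\le\sqrt\alpha$, then by Lemma \ref{lem:BE}(i) and $M_\alpha\le\tfrac25 N_\alpha$ the reference orbit, and the whole tube $B_i$ containing $f_i(\hY)$, stays outside $[-\sqrt\alpha,\sqrt\alpha]$ for all $i<M_\alpha$, so there are no returns to bookkeep inside the window; that machinery belongs to Proposition \ref{prop:slowrec}, not to this statement.

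What the paper does instead (following \cite[Proposition 5.2]{BST}) is an amplification-and-exclusion cascade over intermediate scales, and this is the idea absent from your outline. One sets $\lambda_j=|\partial_y f_{M_\alpha-j}(z_j)|/\overline{\sigma}^{\,M_\alpha-j}$ along the reference orbit, extracts times $t_1<\dots<t_{k_0(\alpha)}$ with $\lambda_{t_i}\ge 2\kappa e^{-r_0(\alpha)}/\sqrt\alpha$ and $k_0(\alpha)\gtrsim \eta\, r_0(\alpha)/\log(2\kappa)$ (this counting claim is the only place the specific value $\eta=\frac{\log\sigma}{8\log 32}$ is used, not any free/bound balance), and at each scale applies Lemma \ref{lem:separate}: the $\epsilon_0\alpha$ separation created at time $t_i+M_*$ is pushed forward to time $M_\alpha$, where the accumulated vertical derivative, controlled through the distortion estimate along the tube $B_i$, amplifies it to a separation $\ge 3\sqrt\alpha e^{-r_0(\alpha)}$; hence inside every cylinder of $\mclp_{t_i}$ meeting the bad set $E$ there is a cylinder of $\mclp_{t_{i+1}}$ disjoint from $E$. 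Bounded distortion (Lemma \ref{lem:h-distortion}) converts this into the loss of a definite proportion of measure at each of the $\sim k_0(\alpha)$ scales, giving $|E|\le(1-p)^{k_0(\alpha)/2}\le e^{-\beta_0 r}$. Without this multi-scale exclusion argument — or some substitute that amplifies the $O(\alpha)$-size separation up to the much larger scale $\sqrt\alpha e^{-r}$ and produces the exponential-in-$r$ decay — your proposal does not reach the stated bound.
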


\begin{proof}
When $r\ge \brac{\frac{1}{2}+2\eta}\log\frac{1}{\alpha}$, $\sqrt{\alpha}e^{-r}\le\alpha^{1+2\eta}$. Since $F^{M_\alpha}(\hY)$ splits into a union of admissible curves, the conclusion follows from Corollary \ref{cor:return to alpha} and Lemma \ref{lem:h-distortion} immediately by choosing $\beta_0$ appropriately. Otherwise, it suffices to consider the case $r=r_0(\alpha)$ and let us follow the argument in \cite{BST}. Without loss of genarality, we can assume that there exists $z_0=(\theta_0,y_0)$ on $\hY$, such that $|f_{M_\alpha}(z_0)|\le\sqrt{\alpha}$. For each $i\ge 0$, denote $z_i=F^i(z_0)$. Then by (ii) of Lemma \ref{lem:BE},

\begin{equation}\label{eqn:y-derivative lower bound}
 \big|\pd{f_{M_\alpha-i}}{y}(z_i)\big|\ge C_*\sigma^{M(\alpha)-i},\quad 0\le i< M_\alpha.
\end{equation}

Let us summarize some basic estimates of distortion in \cite{BST} with slight modification. Let $y_i=Q_b^i(y_0)$, $i\ge 0$.  Recall the constant $A$ in (\ref{eqn:AB}) and let $L=\max\{1,A|I_a|\}$. Denote $B_i=[y_i-5^iL\alpha,y_i+5^iL\alpha]$, $i\ge 0$. Then
\begin{itemize}
  \item $Q_b(B_i)\subset B_{i+1}$, $\forall i\ge 0$;
  \item $f_i(\hY)\subset B_i$, $\forall i\ge 0$;
  \item $B_i\cap [-\sqrt{\alpha},\sqrt{\alpha}]=\varnothing$, $0\le i < M_\alpha$.
\end{itemize}
It follows that for $0\le i < j< M_\alpha$,
\begin{equation}\label{eqn:distortion_log2}
    \sum_{k=i}^j\sup_{y,y'\in B_k}\abs{\log|Q_b'(y)|-\log|Q_b'(y')|} \le \log 2.
\end{equation}

Following \cite{BST}, let us define some notations and constants. Firstly, let $\overline{\sigma}=\sqrt{\sigma}$ and introduce\begin{equation}\label{eqn:lambda_j}
\lambda_j=\big|\pd{f_{M_\alpha-j}}{y}(z_j)\big|\,\Big/\,\overline{\sigma}^{M_\alpha-j}\ge C_*\overline{\sigma}^{M(\alpha)-j} ,\  0\le j\le M_\alpha.
\end{equation}
Then obviously $\lambda_j\big/\lambda_{j+1}=|Q_b'(f_j(z_0))|\,\big/\,\overline{\sigma}< 4$, $0\le j< M_\alpha-1$. Secondly, recalling the constant $M_*$ appearing in Lemma \ref{lem:separate}, let  $\kappa>4^{M_*}$ be a constant satisfying  that

$$\kappa\brac{4^{-M_*-1}\kappa\epsilon_0-2A|I_a|-8(1-\overline{\sigma}^{-1})^{-1}}\ge 3.$$
Thirdly, let $0=t_1<t_2<\cdots<t_q\le M_\alpha$ be all the times such that

$$t_{i+1}:=\max\Brac{s:t_i<s\le M_\alpha, \lambda_{t_i}\le 2\kappa \lambda_s}.$$
By definition,

\begin{itemize}
  \item $\lambda_{t_q}\le 2\kappa$;
  \item $\lambda_j<\lambda_{t_{i+1}}$ when $t_{i+1}<j<M_\alpha$;
  \item $\frac{\kappa}{2}<\lambda_{t_i}/\lambda_{t_{i+1}}<4^{t_{i+1}-t_i}\Rightarrow t_{i+1}\ge t_i+M_*$.
\end{itemize}
Finally, let
$$k_0(\alpha):=\max\Brac{1\le i\le q: \lambda_{t_i}\ge 2\kappa e^{-r_0(\alpha)}/\sqrt{\alpha}}.$$
Following Claim 1 in the proof of \cite[Proposition 5.2]{BST}, we have:
\begin{clm}
When $\alpha$ is sufficiently small,

\begin{equation}\label{eqn:k_0(alpha)}
    k_0(\alpha) \ge \eta r_0(\alpha)/\log(2\kappa).
\end{equation}
\end{clm}
\begin{proof}[Proof of Claim]
By definition, $\lambda_{t_i}\le 2\kappa\lambda_{t_{i+1}}$, $i=0,1,\dots,q-1$. Because $\lambda_{t_q}\le 2\kappa$, by the choice of $k_0(\alpha)$, $k_0(\alpha)<q$ and hence

$$2\kappa e^{-r_0(\alpha)}/\sqrt{\alpha}\ge \lambda_{t_{k_0(\alpha)+1}}\ge (2\kappa)^{-k_0(\alpha)}\lambda_{t_1}\ge C_*\cdot(2\kappa)^{-k_0(\alpha)}\cdot\overline{\sigma}^{M_\alpha}.$$
Recall that $r_0(\alpha)=(\frac{1}{2}-2\eta)\log(1/\alpha)$, $M_\alpha=[-\frac{\log\alpha}{\log 32}]$,  $\overline{\sigma}=\sqrt{\sigma}$ and $\eta=\frac{\log\sigma}{8\log 32}$. The conclusion follows.
\end{proof}
Denote
$$E:= \{\theta\in\omega_0:|f_{M_\alpha}(\theta,Y(\theta))|\le \sqrt{\alpha}e^{-r_0(\alpha)} \}.$$
Then our aim is to prove that there exists $\beta_0>0$, independent of $Y$, such that when $\alpha$ is sufficiently small, $\abs{E}\le e^{-\beta_0 r_0(\alpha)}$. For each $1\le i\le k_0(\alpha)$, define

$$\Omega_i:=\Brac{\omega\in\mclp_{t_i}:\overline{\omega}\cap E\ne\varnothing}\quad\mbox{and}\quad E_i:=\cup_{\omega\in\Omega_i}\overline{\omega}\,.$$
By definition, $\cap_{i=0}^{k_0(\alpha)}E_i\supset E$.
\begin{clm}
For each $1\le i < k_0(\alpha)$ and each $\omega_i\in\Omega_i$, there exists $\omega\in \mclp_{t_{i+1}}$, such that $\omega\subset\omega_i$ and $\overline{\omega}\cap E=\varnothing$.
\end{clm}
\begin{proof}[Proof of Claim]
Given $\omega_i\in \Omega_i$, $\hY_i:=F^{t_i}(\hY|_{\omega_i})$ is an admissible curve. According to Lemma \ref{lem:separate}, there exist $\omega^{\pm}\in\mclp_{M_*}$ with $\omega^{\pm}\subset h^{t_i}(\omega_i)$ and $h^{M_*}(\omega^+)=h^{M_*}(\omega^-):=\hat{J}$, such that for $\hZ^{\pm}=F^{M_*}(\hY_i|_{\omega^{\pm}})$, (\ref{eqn:separate}) holds.
Since $t_i+M_*\le t_{i+1}$, in particular, there exist $\omega^{\pm}_{i+1}\in\mclp_{t_{i+1}}$, such that $\omega^{\pm}_{i+1}\subset \omega_i$,
$h^{t_i+M_*}(\omega^+_{i+1})=h^{t_i+M_*}(\omega^-_{i+1}):=J\subset \hat{J}$, and

\begin{equation}\label{eqn:separate on J}
\sup_{\theta\in J}|Z^+(\theta)-Z^-(\theta)|\ge \epsilon_0\alpha.
\end{equation}
Denote $t_{i+1}-t_i-M_*$ by $n$ temporarily. By definition, given $\theta\in\dom(Y^\pm_{i+1})$, there exists $\theta_0\in J$, such that $h^n(\theta_0)=\theta$. Due to (\ref{eqn:distortion_log2}),

$$|Y_{i+1}^+(\theta)-Y_{i+1}^-(\theta)|=|f_n(\theta_0,Z^+(\theta_0))-f_n(\theta_0,Z^-(\theta_0))|\ge\frac{1}{2}\big|\pd{f_n}{y}(z_{t_i+M_*})\big|\times|Z^+(\theta_0)-Z^-(\theta_0)|,$$
where

$$\big|\pd{f_n}{y}(z_{t_i+M_*})\big|\ge 4^{-M_*}\big|\pd{f_{t_{i+1}-t_i}}{y}(z_{t_i})\big| =\frac{\lambda_{t_i}}{4^{M_*}\lambda_{t_{i+1}}}\times\overline{\sigma}^{t_{i+1}-t_i}.$$
Then (\ref{eqn:separate on J}) and the two inequalities above imply that

\begin{equation}\label{eqn:t_{i+1}}
    \sup_{\theta\in\dom(Y^\pm_{i+1})}|Y^+_{i+1}(\theta)-Y^-_{i+1}(\theta)|\ge \frac{1}{2}\frac{\lambda_{t_i}}{4^{M_*}\lambda_{t_{i+1}}}\epsilon_0\alpha.
\end{equation}
For $t_{i+1}\le j \le M_\alpha$, define

$$\Delta_j:=\inf_{\theta^{\pm}\in\omega^{\pm}_{i+1}}|f_j(\theta^+,Y(\theta^+))-f_j(\theta^-,Y(\theta^-))|.$$
By (\ref{eqn:t_{i+1}}) and noting that $|(Y^{\pm}_{i+1})'|\le A\alpha$, we have:
\begin{equation}\label{eqn:Delta_{t_{i+1}}}
 \Delta_{t_{i+1}}=\inf_{\theta^\pm\in\dom(Y_{i+1}^\pm)}|Y^+_{i+1}(\theta^+)-Y^-_{i+1}(\theta^-)|\ge \frac{1}{2}\frac{\lambda_{t_i}}{4^{M_*}\lambda_{t_{i+1}}}\epsilon_0\alpha-2A|I_a|\alpha\ge\brac{4^{-M_*-1}\kappa\epsilon_0-2A|I_a|}\alpha.
\end{equation}
As in the proof of \cite[Proposition 5.2]{BST}, denote $D_j=\min_{y\in B_j}|Q_b'(y)|$. Since

$$\Delta_{j+1}\ge D_j\Delta_j-2\alpha,$$
by induction,

$$\Delta_{M_\alpha}\ge\Delta_{t_{i+1}}\prod_{j=t_{i+1}}^{M_\alpha-1}D_j-2\alpha\brac{1+\sum_{j=t_{i+1}+1}^{M_\alpha-1}\bigg(\prod_{l=j}^{M_\alpha-1}D_l\bigg)}.$$
By (\ref{eqn:distortion_log2}) and (\ref{eqn:lambda_j}) ,

$$\lambda_j\overline{\sigma}^{M_\alpha-j}/2\le\prod_{l=j}^{M_\alpha-1}D_l\le 2\lambda_j\overline{\sigma}^{M_\alpha-j}.$$
Noting that $\lambda_j< \lambda_{t_{i+1}}$ when $j> t_{i+1}$, combing the two inequalities above, it is easy to obtain that

$$\Delta_{M_\alpha}\ge \lambda_{t_{i+1}}\overline{\sigma}^{M_\alpha-t_{i+1}}(\Delta_{t_{i+1}}/2-4\alpha(1-\overline{\sigma}^{-1})^{-1}).$$
Substituting (\ref{eqn:Delta_{t_{i+1}}}) into the inequality above and making use of $\lambda_{t_{i+1}}\ge 2\kappa e^{-r_0(\alpha)}/\sqrt{\alpha}$ and the choice of $\kappa$, we have:
$$\Delta_{M_\alpha}\ge 3 e^{-r_0(\alpha)}\sqrt{\alpha}. $$
That is to say, $\overline{\omega^{\pm}_{i+1}}$ cannot both intersect $E$, i.e. the claim holds.
\end{proof}

Now let us return to the proof of the proposition. Let $N\ge 2M_\alpha/k_0(\alpha)$ be a large integer independent of $\alpha$. Then

$$\#\Brac{1\le i< k_0(\alpha):t_{i+1}-t_i\le N}\ge \frac{k_0(\alpha)}{2}.$$
By Lemma \ref{lem:h-distortion}, there exists $p>0$, only depending on $N$, such that for each pair $\omega'\subset\omega$ with $\omega\in\mclp_n$, $\omega'\in\mclp_{n'}$, $n<n'\le n+N$, we have $\abs{\omega'}\ge p\abs{\omega}$. By the second claim, it follows that
when $t_{i+1}\le t_i+N$, $\abs{E_{i+1}}\le (1-p)\abs{E_i}$. Therefore,
$$\abs{E}\le (1-p)^{k_0(\alpha)/2}.$$
Then by (\ref{eqn:k_0(alpha)}), $\abs{E}\le e^{-\beta_0 r_0(\alpha)}$ for some constant $\beta_0>0$, which completes the proof.
\end{proof}

\subsection{Slow recurrence conditions}\label{sebsec:slowrec}

To make the argument slightly simpler, let us adopt the following weakened forms of Corollary \ref{cor:return to alpha} and Proposition \ref{prop:M_alpha}. There exists $0<\beta\le\min\{\frac{1}{2l_0},\beta_0\}$, such that when $\alpha$ is small enough, for each admissible curve $X:\omega\to\mbbr$, we have:
\begin{itemize}
  \item If $\epsilon\le \alpha^2$, then

  \begin{equation}\label{eqn:beta case1}
    |\{\theta\in\omega:|X(\theta)|\le\epsilon\}|\le \epsilon^{\,\beta}.
  \end{equation}

  \item If $\epsilon\le \alpha^{1-2\eta}$, then

  \begin{equation}\label{eqn:beta case2}
    |\{\theta\in\omega:|f_{M_\alpha}(\theta,X(\theta))|\le\epsilon\}|\le \epsilon^{\,\beta}.
  \end{equation}

\end{itemize}
Based on (\ref{eqn:beta case1}) and (\ref{eqn:beta case2}), following the ``Large deviations'' argument in \cite{V}, we can deduce the following version of slow recurrence conditions.

\begin{prop}\label{prop:slowrec}
There exists $c>0$, such that when $\alpha$ is sufficiently small, the following statement holds. For each $\varepsilon>0$, there exists $\tilde{\delta}=\tilde{\delta}(\varepsilon)\in(0, \frac{1}{2})$\,, independent of $\alpha$, such that when $n\in\mbbn$ is sufficiently large,

\begin{equation}\label{eqn:tail}
    \Big|\Big\{(\theta,y)\in I_a\times\hI_b: \sum_{\substack{0\le i< n\\ |f_i(\theta,y)|< \delta}}\log \frac{1} {|f_i(\theta,y)|}> \varepsilon n\Big\}\Big|\le e^{-c\varepsilon \beta \sqrt{n}},
\end{equation}
where $\delta=\tilde{\delta}\,\alpha^{1-2\eta}$. In particular, for Lebesgue a.e. $(\theta, y)\in I_a\times \hI_b$,

\begin{equation} \label{eqn:slowrec}
\limsup_{n\to\infty} \dfrac{1}{n} \sum_{\substack{0\le i< n\\ |f_i(\theta,y)|< \delta}} \log \frac{1} {|f_i(\theta,y)|} \le \varepsilon.
\end{equation}
\end{prop}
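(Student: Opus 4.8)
The plan is to run Viana's ``large deviations'' argument (\cite{V}, cf.\ \cite{BST}), feeding in the curve estimates (\ref{eqn:beta case1})--(\ref{eqn:beta case2}) and the distortion control (\ref{eqn:h-distortion}). Fix $\alpha$ small, write $\Sigma_n(\theta,y)$ for the sum in (\ref{eqn:tail}) and $B_n=\{(\theta,y)\in I_a\times\hI_b:\Sigma_n(\theta,y)>\varepsilon n\}$. By Fubini it suffices to bound $\big|\{\theta\in I_a:\Sigma_n(\theta,y)>\varepsilon n\}\big|$ uniformly in $y\in\hI_b$; so fix $y$, work with the horizontal curve $Y\equiv y$, and use that for every $j\ge 1$ and $\omega\in\mclp_j$ the graph $F^j(\hY|_\omega)$ is an admissible curve over $h^j(\omega)\in\mclp_0$. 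For $\theta$ in the slice, call $i$ a \emph{return} if $\abs{f_i(\theta,y)}<\delta$, with depth $\rho\ge 0$ given by $\delta e^{-\rho-1}\le\abs{f_i(\theta,y)}<\delta e^{-\rho}$ (finite for a.e.\ $\theta$). Since $\eta<\tfrac14$ one has $\delta=\tilde\delta\,\alpha^{1-2\eta}<2\sqrt\alpha$ for small $\alpha$, so Lemma~\ref{lem:BE}(i) applies at every return and yields $\abs{f_{i+k}(\theta,y)}\ge\delta_*$ for $1\le k\le N_\alpha$; hence consecutive returns are at least $N_\alpha$ (so in particular $>M_\alpha$) apart. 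Disposing separately of the at most one return with $i<2M_\alpha$ (routine: its contribution is either $O(\log\tfrac1\alpha)$, hence $<\tfrac{\varepsilon n}{2}$ for $n$ large, or it is $\ge\tfrac{\varepsilon n}{2}$, which forces $\abs{f_i(\theta,y)}<e^{-\varepsilon n/3}$ at an $i<2M_\alpha$ — an event of two-dimensional measure exponentially small in $n$), and using $\log\tfrac1{\abs{f_{i_j}(\theta,y)}}\le\log\tfrac1\delta+\rho_j+1$ at the remaining returns $i_1<\dots<i_s$ of depths $\rho_j$, membership in the slice forces
\[
s\Bigl(\log\tfrac1\delta+1\Bigr)+\sum_{j=1}^{s}\rho_j\ \ge\ \tfrac{\varepsilon n}{2}.
\]

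Next I would estimate, for each admissible configuration $\iota=(s;\,i_1<\dots<i_s;\,\rho_1,\dots,\rho_s)$ with gaps $\ge N_\alpha$, the measure of the set $E_\iota$ of $\theta$ whose orbit returns precisely at the $i_j$ with depth $\ge\rho_j$. Handling the returns from left to right: at $i_j$ look back $M_\alpha$ steps to $t_j:=i_j-M_\alpha$, which lies in $(i_{j-1},i_j)$ since $i_j-i_{j-1}\ge N_\alpha>M_\alpha$, and satisfies $t_j\ge M_\alpha\ge1$; for each $\omega\in\mclp_{t_j}$ the graph $F^{t_j}(\hY|_\omega)$ is an admissible curve over $h^{t_j}(\omega)\in\mclp_0$, and $f_{i_j}(\theta,y)$ equals $f_{M_\alpha}$ evaluated on this curve at $h^{t_j}(\theta)$; as $\delta e^{-\rho_j}\le\delta<\alpha^{1-2\eta}$, (\ref{eqn:beta case2}) shows that the constraint at $i_j$ shrinks the relevant set, measured on the scale of $h^{t_j}(\omega)$, by a factor $\le(\delta e^{-\rho_j})^\beta$. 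Carrying these sub-level-set bounds between scales with the distortion estimate (\ref{eqn:h-distortion}), and using that the look-back windows $[t_j,i_j]$ are pairwise disjoint — so that the returns can be conditioned on one after another — one arrives at a bound of the shape
\[
\abs{E_\iota}\ \le\ C^{\,s}\,\delta^{\beta s}\,e^{-\beta\sum_j\rho_j},
\]
with a constant $C$ independent of $\iota$, $n$ and $\alpha$.

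Finally one sums $\abs{E_\iota}$ over the configurations obeying the displayed inequality: there are $\le\binom ns$ choices of $i_1<\dots<i_s$, the depths contribute $\sum_\rho e^{-\beta\sum\rho_j}=(1-e^{-\beta})^{-s}$, and separating the constraint by a Chernoff factor leaves a bound of the form $e^{-\beta\varepsilon n/4}\bigl(1+C'\delta^{\beta/2}\bigr)^{n}$ with $C'$ (like $\beta$) independent of $\alpha$. Since $\delta^{\beta/2}=(\tilde\delta\,\alpha^{1-2\eta})^{\beta/2}\le\tilde\delta^{\beta/2}$, we may choose $\tilde\delta=\tilde\delta(\varepsilon)\in(0,\tfrac12)$ — depending on $\varepsilon$ alone — with $C'\tilde\delta^{\beta/2}\le\beta\varepsilon/8$; then the slice has measure $\le e^{-\beta\varepsilon n/8}$, whence by Fubini $\Leb(B_n)\le e^{-c\varepsilon\beta\sqrt n}$ for a universal $c>0$ and all large $n$ (using $n\ge\sqrt n$ and absorbing $|\hI_b|$). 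Since $\sum_n e^{-c\varepsilon\beta\sqrt n}<\infty$, Borel--Cantelli gives that a.e.\ $(\theta,y)$ lies in only finitely many $B_n$, so $\tfrac1n\Sigma_n(\theta,y)\le\varepsilon$ for all large $n$, which is (\ref{eqn:slowrec}).

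The step I expect to be the crux is the measure bound for $E_\iota$: one must produce a genuinely product-type estimate even though the number of elements of $\mclp_{t_j}$ relevant at the $j$-th return grows like $\lambda_g^{t_j}$, so that a careless summation over intermediate partition pieces would swamp the gain $(\delta e^{-\rho_j})^\beta$. This is the technical heart of \cite{V,BST}: condition on the returns sequentially and transport the sub-level-set bounds from (\ref{eqn:beta case2}) from one scale to the next using the distortion estimate (\ref{eqn:h-distortion}); the quadratic term in (\ref{eqn:h-distortion}) together with the need to iterate over many returns is what forces the sub-exponential rate $e^{-c\varepsilon\beta\sqrt n}$ recorded here, which is in any case amply summable.
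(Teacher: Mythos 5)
Your overall architecture (Fubini to horizontal slices, discretized return depths, a per-return gain of $(\delta e^{-\rho_j})^\beta$ obtained by looking back $M_\alpha$ steps and applying (\ref{eqn:beta case2}) to an admissible curve, then a combinatorial count beaten by taking $\tilde\delta=\tilde\delta(\varepsilon)$ small) is the same as the paper's, but the step you yourself flag as the crux — the bound $|E_\iota|\le C^s\delta^{\beta s}e^{-\beta\sum_j\rho_j}$ — is not obtainable by the conditioning you describe, and this is a genuine gap rather than a deferrable technicality. To condition on the returns ``one after another'' you must replace the event at return $i_j$ by a union of partition elements of some level not exceeding $t_{j+1}=i_{j+1}-M_\alpha$, since (\ref{eqn:beta case2}) only bounds the proportion of an entire $\mclp_0$-piece, not of the tiny subset surviving an earlier constraint; disjointness of the look-back windows $[i_j-M_\alpha,i_j]$ is irrelevant to this. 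To retain the gain $e^{-\beta\rho_j}$ the covering must be taken at level roughly $i_j+r_j$ with $r_j\approx(\rho_j+\log(1/\delta))/\log\lambda_g$ (only on an element of $\mclp_{i_j+r_j}$ does the time-$i_j$ admissible curve oscillate by $O(\alpha\lambda_g^{-r_j})$, so that enlarging the sublevel set costs only a bounded factor via (\ref{eqn:h-distortion})). Hence you need $i_{j+1}-i_j\ge M_\alpha+r_j$, whereas your only gap guarantee, $N_\alpha$ from Lemma~\ref{lem:BE}(i), is independent of $\rho_j$, while depths up to order $\varepsilon n$ are exactly the ones that matter. So the product bound is unproved precisely for deep returns, and your closing attribution of the $\sqrt n$ rate to the quadratic term in (\ref{eqn:h-distortion}) is off the mark: had your product bound held, your own Chernoff computation would give decay exponential in $n$.

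The paper's proof supplies the missing mechanism, and it is also the true source of the $\sqrt n$. Fixing $y_0$ and setting $K=n$, it first splits off the set where some depth exceeds $\sqrt K$ and kills it by a single application of (\ref{eqn:beta case1}) together with a union bound over $k\le K$ and (\ref{eqn:h-distortion}) (this alone already caps the rate at $\lambda_g^{-\beta\sqrt K}$); on the complement all depths are at most $\sqrt K$, and the return times are distributed into $L=2[\sqrt K]$ residue classes mod $L$, so that within one class consecutive selected times differ by at least $L\ge M_\alpha+r_{k_j}$ — exactly the separation needed to make the sequential estimate close up, with the covering taken at level $k_j+r_{k_j}$ as above. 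The price is that the depth sum captured by one class is only guaranteed to be $\ge\varepsilon K/(2L)\approx\varepsilon\sqrt K/4$, the second source of the $\sqrt n$. The counting of configurations is then done per class by Stirling's formula, and it is here (making $\Delta\approx\log(1/\delta)/\log\lambda_g$ large) that $\tilde\delta(\varepsilon)$ is chosen, in the same spirit as your choice. Separately, your disposal of a return before time $2M_\alpha$ is asserted without justification (``two-dimensional measure exponentially small in $n$''); it is fixable, e.g.\ via (\ref{eqn:beta case1}) applied at time $i\ge1$ with level $e^{-\varepsilon n/3}\le\alpha^2$, but in the paper's bookkeeping no such case arises because the sum is simply restricted to $k\ge M_\alpha$ at negligible cost.
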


\begin{rmk}
It is well known that, starting from the estimate shown in (\ref{eqn:tail}), kinds of statistical properties beyond the Main Theorem can be obtained. See, for example \cite{A1}, for a survey on this topic.
\end{rmk}
To begin with the proof of Proposition \ref{prop:slowrec}, for each $0<\tilde{\delta}<\frac{1}{2}$,  let $\delta=\tilde{\delta}\,\alpha^{1-2\eta}$, and denote

$$\Delta=\Delta(\delta):=\left[\frac{\log\frac{1}{\delta}}{\log\lambda_g}\right].$$
Fix an arbitrary $y_0\in \hI_b\setminus\{0\}$. Let

$$\mclr=\mclr(y_0):=\{\theta\in I_a: h^n(\theta)\notin u(PC_a) \mbox{ and } f_n(\theta,y_0)\ne 0,\,\forall n\ge 0\}.$$
Note that $\mclr$ is of full Lebesgue measure in $I_a$. For each $k\ge 0$ and each $\theta\in I_a$, if $\theta\in\mclr$ and $|f_k(\theta,y_0)|< \delta$, let  $q_k(\theta)$ be the unique positive integer satisfying

$$ \lambda_g^{-q_k(\theta)-1}< \abs{f_k(\theta,y_0)}\le\lambda_g^{-q_k(\theta)}\,;$$
otherwise, let $q_k(\theta)=0$.
By definition, either $q_k(\theta)=0$ or $q_k(\theta)\ge \Delta$.

Fix $\varepsilon>0$. For $K\in\mbbn$, let

$$E_K=E_K(\varepsilon,\tilde{\delta},y_0):=\bigg\{\theta\in \mclr: \sum_{k=1}^K q_k(\theta)\ge \varepsilon K\bigg\}.$$
By Fubini's theorem, Proposition \ref{prop:slowrec} can be reduced to the following lemma.
\begin{lem}\label{E_K}
When $\alpha$ is small enough, the following statement holds. Given $\varepsilon>0$, there exist $0<\tilde{\delta}<\frac{1}{2}$, independent of $\alpha$ or $y_0$, and $K_0\in\mbbn$, independent of $y_0$, such that when $K>K_0$,

$$\abs{E_K}\le \lambda_g^{-\frac{\varepsilon \beta}{10} \sqrt{K}}.$$
\end{lem}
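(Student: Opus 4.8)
The statement to prove is Lemma~\ref{E_K}: an exponential-in-$\sqrt{K}$ bound on the measure of the set $E_K$ of $\theta$ whose first $K$ ``depth'' values $q_k(\theta)$ sum to at least $\varepsilon K$. Here $q_k(\theta)$ records how deeply the orbit point $f_k(\theta,y_0)$ enters the critical neighborhood $(-\delta,\delta)$, measured on the logarithmic scale $\lambda_g^{-q}$. The overall strategy is the classical ``large deviations'' combinatorial argument of Viana~\cite{V}, adapted to our setting. The plan is to decompose $E_K$ over all possible ``itineraries'' $(q_1,\dots,q_K)$ of depths with $\sum q_k \ge \varepsilon K$, to estimate the measure of the set of $\theta$ realizing a fixed itinerary using the recurrence bounds (\ref{eqn:beta case1}) and (\ref{eqn:beta case2}) together with the distortion Lemma~\ref{lem:h-distortion}, and finally to sum the resulting geometric-type series against the number of itineraries, which is controlled by a standard counting estimate.

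First I would set up the combinatorics. For a fixed itinerary, only the indices $k$ with $q_k>0$ matter; call these the ``return times'' $0\le k_1<k_2<\cdots<k_s<K$ with associated depths $p_j:=q_{k_j}\ge\Delta$, so $\sum_j p_j\ge\varepsilon K$. The point $\theta$ lies in $E_K$ iff at each $k_j$ the admissible curve $f_{k_j}(\cdot,y_0)$ (restricted to the appropriate element of $\mclp_{k_j}$) takes value in an annulus of size $\asymp\lambda_g^{-p_j}$. Iterating the bound (\ref{eqn:beta case1}) (or, when the relevant depth is large relative to $\alpha$, bridging by $M_\alpha$-blocks and using (\ref{eqn:beta case2})) gives, for each return, a measure contraction factor of order $\lambda_g^{-\beta p_j}$ relative to the current partition element; the distortion Lemma~\ref{lem:h-distortion} lets us multiply these conditional estimates along the orbit, losing only a bounded (geometric-series) constant per step and an extra square-root from the left inequality in (\ref{eqn:h-distortion}). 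Summing over the choice of depths $(p_1,\dots,p_s)$ with $\sum p_j\ge\varepsilon K$ yields a factor like $\sum_{\sum p_j\ge\varepsilon K}\prod_j\lambda_g^{-\beta p_j/c'}$, which decays like $\lambda_g^{-\beta\varepsilon K/c''}$.

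Next I would handle the count of return-time patterns: the number of ways to choose which $k\le K$ are return times and — crucially — there is a subtle point that a return of depth $p$ forces the next $\sim p$ iterates to be ``bound'' to the critical orbit and non-returning, so the return times cannot be packed arbitrarily densely; this is exactly why one gets $\sqrt{K}$ rather than $K$ in the exponent. Following Viana, one splits into the case $s\le\sqrt K$ (few returns, so $\binom{K}{s}\le K^{\sqrt K}=\lambda_g^{O(\sqrt K\log K)}$ patterns, dominated by the geometric decay once we note $\sum p_j\ge\varepsilon K$ forces the decay factor to be $\lambda_g^{-\Omega(\varepsilon K)}$) and the case $s>\sqrt K$ (many returns, but then $\sum p_j\ge s\Delta\gtrsim\sqrt K\log(1/\delta)$, and one uses that each unit of depth costs a factor, while the number of patterns with $s$ returns summing to total depth $P$ is at most $\binom{P}{s}2^s$, again beaten by the product of decay factors). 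Choosing $\tilde\delta$ small makes $\Delta$ large, which both makes the per-return decay $\lambda_g^{-\beta p_j}\le\lambda_g^{-\beta\Delta}$ strong and validates the hypotheses $\epsilon\le\alpha^2$ resp. $\epsilon\le\alpha^{1-2\eta}$ of (\ref{eqn:beta case1})--(\ref{eqn:beta case2}) at the relevant scales. Collecting all factors and taking $K_0$ large enough to absorb the $\log K$ terms into the $\sqrt K$ gives $|E_K|\le\lambda_g^{-\frac{\varepsilon\beta}{10}\sqrt K}$, as claimed.

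The main obstacle I anticipate is the bookkeeping that turns the linear-in-$K$ combinatorial loss from counting return patterns into a merely $\sqrt K$-subexponential loss, while simultaneously keeping the distortion estimates valid across arbitrarily long bound periods; this is the technical heart of Viana's argument and must be transcribed carefully with our Lemma~\ref{lem:h-distortion} (note its asymmetric, quadratic left-hand bound) in place of the cleaner distortion control available in the one-dimensional base. A secondary point requiring care is that at depths $p$ comparable to or exceeding $\log(1/\alpha)$ one cannot apply (\ref{eqn:beta case1}) directly on a single admissible curve and must instead pass through $M_\alpha$ iterates and invoke (\ref{eqn:beta case2}), matching the scales $\sqrt\alpha e^{-r}$ appearing in Proposition~\ref{prop:M_alpha} with $\lambda_g^{-p}$; the choice of $\tilde\delta$ and the relation $M_\alpha\le\frac25 N_\alpha$ are what make this dovetail correctly.
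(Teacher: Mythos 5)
There is a genuine gap at the heart of your plan: the claim that, for a fixed itinerary, you can multiply a contraction factor of order $\lambda_g^{-\beta p_j}$ for \emph{every} return and obtain a per-itinerary bound of order $\lambda_g^{-c\beta\varepsilon K}$. For the generic (shallow) returns, whose depth is comparable to $\log_{\lambda_g}(1/\delta)$ with $\delta=\tilde{\delta}\alpha^{1-2\eta}$, the estimate (\ref{eqn:beta case1}) is useless: admissible curves are only non-flat at scale $\alpha$ (Corollary \ref{cor:return to alpha} controls $\{|X|\le\alpha\epsilon\}$ for $\epsilon\le\epsilon_*$, and $\delta/\alpha=\tilde\delta\alpha^{-2\eta}\to\infty$), so the only available tool is (\ref{eqn:beta case2}), which bounds a conditional measure inside an element of $\mclp_{k-M_\alpha}$ after an $M_\alpha$-step look-back. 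To chain such conditional bounds along an orbit you must cover the surviving set by elements of $\mclp_{k_j+q_{k_j}}$ and then require the next look-back level to refine them, i.e. $k_{j+1}\ge k_j+q_{k_j}+M_\alpha$. Arbitrary itineraries violate this: the only spacing the dynamics gives you is $N_\alpha$ from Lemma \ref{lem:BE}(i), which is independent of the depth, so your ``bound period of length $\sim p$'' heuristic is not available here, and deep or moderately deep returns can sit closer together than their own depth. Hence neither the claimed $\lambda_g^{-\Omega(\varepsilon K)}$ per-itinerary decay nor your explanation of why the final exponent is $\sqrt K$ rather than $K$ is justified.

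The paper's proof supplies exactly the missing device. First, depths exceeding $\sqrt K$ are split off into $E_K^2$ and killed by a plain union bound over $k\le K$ and $\omega\in\mclp_k$ using (\ref{eqn:beta case1}) (legitimate there because $\lambda_g^{-\sqrt K}\le\alpha^2$ once $K$ is large) together with (\ref{eqn:h-distortion}). Second, for $E_K^1$ the times in $[M_\alpha,K]$ are partitioned into $L=2[\sqrt K]$ congruence classes modulo $L$; within one class consecutive selected times differ by at least $L\ge q_k+M_\alpha$ (since $q_k\le\sqrt K$ there), so the conditional estimates from (\ref{eqn:beta case2}) do chain, with the per-return constants $2^\beta C_d$ absorbed because every nonzero depth is at least $\Delta$. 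By pigeonhole one class carries total depth at least $\varepsilon K/(2L)\approx\varepsilon\sqrt K/4$, and \emph{this} — not any packing restriction on returns — is the source of the $\sqrt K$ in the exponent; the remaining depth in the other classes is simply thrown away. Your Stirling-type counting is in the spirit of the paper's second claim (which needs $\tilde\delta$ small so that $\Delta$ is large, giving $Q(K,p,R)\le\lambda_g^{\beta R/6}$), but it is attached to the wrong decomposition, so as written the argument does not close.
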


\begin{proof}
Let $Y$ denote the constant curve $Y\equiv y_0$. We shall make use of the admissibility of pieces of $F^n(\hY)$ for appropriate $n$ repeatedly.

Given $K\in\mbbn$, let

$$E^2_K:=\Brac{\theta\in E_K:\exists 1\le k \le K, q_k(\theta)> \sqrt{K}}\quad\mbox{and}\quad E^1_K:=E_K\setminus E_K^2.$$
By definition,

$$E^2_K\subset \bigcup_{k=1}^K \bigcup_{\omega\in\mclp_k}\Brac{\theta\in \mclr\cap\omega:q_k(\theta)> \sqrt{K}}.$$
For each $k\ge 1$ and each $\omega\in\mclp_k$, $X_\omega:=F^k(\hY|_{\omega})$ is an admissible curve, and

$$h^k(\{\theta\in \mclr\cap\omega:q_k(\theta)> \sqrt{K}\})\subset \{\theta\in h^k(\omega):|X_\omega(\theta)|<\lambda_g^{-\sqrt{K}}\}.$$
Therefore, when $K$ is large, say $K>4\brac{\frac{\log\alpha}{\log\lambda_g}}^2$,  by (\ref{eqn:beta case1}) together with (\ref{eqn:h-distortion}),

$$|\{\theta\in\mclr\cap\omega:q_k(\theta)> \sqrt{K}\,\}|\le C_d  \lambda_g^{-\beta\sqrt{K}}|\omega|.$$
It follows that, when $K$ is large,

\begin{equation}\label{eqn:E_K^2}
|E^2_K|\le C_d |I_a|\cdot K\lambda_g^{-\beta\sqrt{K}} \le \lambda_g^{-\frac{\beta}{2}\sqrt{K}}\le\lambda_g^{-\frac{\varepsilon\beta}{2}\sqrt{K}}.
\end{equation}

Given $K\ge 4 M_\alpha^2/\varepsilon^2$, to estimate the size of $E^1_K$, let us denote $L:=2[\sqrt{K}]$, and for every $0\le p\le L-1$, define

$$\mclm_p:=\Brac{M_\alpha\le k\le K:k\equiv p\mod L}$$
and

$$E^1_{K,p}:=\bigg\{\theta\in E^1_K:\sum_{k\in\mclm_p}q_k(\theta)\ge \frac{\varepsilon K}{2L}\bigg\}.$$
By definition, if $\theta\in E^1_K$ with $K\ge 4 M_\alpha^2/\varepsilon^2$, then $q_k(\theta)\le\sqrt{K}$, $\forall 1\le k\le K$, and hence

$$\sum_{p=0}^{L-1}\sum_{k\in\mclm_p}q_k(\theta)=\sum_{k=M_\alpha}^K q_k(\theta)> \varepsilon K-M_\alpha\sqrt{K}\ge \frac{\varepsilon K}{2}.$$
It implies that $E^1_K\subset\bigcup_{p=0}^{L-1}E^1_{K,p}$.

Now let us fix $K$ and $p$ and estimate $|E^1_{K,p}|$. For each $\mbfr=(r_k)_{k\in\mclm_p}$, denote $\norm{\mbfr}=\sum_k r_k$ and let

$$E^1_{K,p}(\mbfr):=\Brac{\theta\in E^1_{K,p}: q_k(\theta)=r_k \mbox{ for all }k \in\mclm_p}.$$

\begin{clm}
Provided that $\alpha$ is small enough, for each $\mbfr\in\Brac{0,1,\dots,[\sqrt{K}]}^{\mclm_p}$, when $K$ is large(independent of $y_0$), we have
\end{clm}

$$\abs{E^1_{K,p}(\mbfr)}\le \lambda_g^{-\frac{2\beta}{3}\norm{\mbfr}}.$$

\begin{proof}[Proof of Claim]

When $\norm{\mbfr}=0$, there is nothing to prove. Otherwise, let $M_\alpha\le k_1<k_2<\dots<k_l$ be all the elements in $\mclm_p$ with $r_{k_j}>0$.
For each $1\le j \le l$ and each $\theta\in E^1_{K,p}(\mbfr)$, let $J_j'(\theta)$ be the element in $\mclp_{k_j-M_\alpha}$ containing $\theta$,  and let $J_j(\theta)$ be the element in $\mclp_{k_j+r_{k_j}}$ containing $\theta$. Finally, let

$$\Omega_j'=\bigcup_{\theta\in E^1_{K,p}(\mbfr)}J'_j(\theta) \quad \mbox{and}\quad \Omega_j=\bigcup_{\theta\in E^1_{K,p}(\mbfr)}J_j(\theta).$$
Since $k_{j+1}-k_j \ge  L \ge r_{k_j} +M_\alpha,$ any element of $\mclp_{k_{j+1}-M_\alpha}$ is either  contained in $\Omega_j$ or disjoint from $\Omega_j$, which implies that $\Omega'_{j+1}\subset \Omega_j$.

Let us show that for each $j=1,2,\ldots, l$ and each component $J_j'$ of $\Omega_j'$,

\begin{equation}\label{eqn:Omegashrink}
|\Omega_{j}\cap J_j'|\le 2^\beta C_d\cdot\lambda_g^{-\beta r_{k_j}}|J_j'|.
\end{equation}
Indeed, for each $\theta'\in \Omega_j\cap J_j'$, there exists $\theta\in E^1_{K,p}(\mbfr)$ such that $\theta'\in J_j(\theta)$. Because $|h'|\ge \lambda_g$ on $I_a$ and $h^{k_j}(J_j(\theta))\in\mclp_{r_{k_j}}$, $|h^{k_j}(\theta)-h^{k_j}(\theta')|\le \lambda_g^{-r_{k_j}}|I_a|$.
Since $\hZ=F^{k_j}(\hY|_{J_j(\theta)})$ is a piece of admissible curve and since $q_{k_j}(\theta)=r_{k_j}$, it follows that
$$|Z(h^{k_j}(\theta'))|\le |Z(h^{k_j}(\theta'))-Z(h^{k_j}(\theta))| + |Z(h^{k_j}(\theta))|\le (A|I_a|\cdot\alpha+1)\lambda_g^{-r_{k_j}}\le 2\lambda_g^{-r_{k_j}}.$$
Denote $\hX:=F^{k_j-M_\alpha}(\hY|_{J'_j})$. The inequality above implies that

$$|f_{M_\alpha}(\theta,X(\theta))|\le 2\lambda_g^{-r_{k_j}},\quad \forall\theta\in h^{k_j-M_\alpha}(\Omega_{j}\cap J_j').$$
Since $r_{k_j}\ge \Delta$, $2\lambda_g^{-r_{k_j}}\le 2\delta\le\alpha^{1-2\eta}$. Then by (\ref{eqn:beta case2}),

$$|h^{k_j-M_\alpha}(\Omega_{j}\cap J_j')|\le 2^\beta\lambda_g^{-\beta r_{k_j}}.$$
Due to (\ref{eqn:h-distortion}),
the estimate (\ref{eqn:Omegashrink}) follows. As a result,

$$|\Omega_j|\le 2^\beta C_d\cdot\lambda_g^{-\beta r_{k_j}}|\Omega_j'|.$$
Therefore, an obvious induction on $j$ implies  that

$$|E^1_{K,p}(\mbfr)|\le \abs{\Omega_l}\le 2^{\beta l}C_d^l\cdot\lambda_g^{- \sum_{j=1}^l \beta r_{k_j}}|\Omega_1'|\le \lambda_g^{-\frac{2\beta}{3}\norm{\mbfr}},$$
provided that $\alpha$ is small, and $\norm{\mbfr}\ge\Delta$ is large accordingly.
\end{proof}

Now let us estimate the number of  possible constrained configurations of $\mbfr$. Given $R\in\mbbn$, define

$$Q(K,p,R):=\#\Brac{\mbfr\in\Brac{0,1,\dots,[\sqrt{K}]}^{\mclm_p}:\norm{\mbfr}=R \mbox{ and } E^1_{K,p}(\mbfr)\ne \emptyset }.$$
\begin{clm}
Given $\varepsilon>0$, there exists $0<\tilde{\delta}<\frac{1}{2}$, such that when $K$ is large(independent of $y_0$),
$$Q(K,p,R)\le \lambda_g^{\frac{\beta}{6}R}, \quad\forall R\ge \frac{\varepsilon K}{2L}.$$
\end{clm}

\begin{proof}[Proof of Claim]

Since each nonzero $r_k$ is no less than $\Delta$,

$$Q(K,p,R)\le \sum_{i=1}^{\min(\#\mclm_p,[R/\Delta])}{\#\mclm_p \choose i }{R-i(\Delta-1)-1\choose i-1},$$
where the first binomial coefficient counts the position of nonzero $r_k$'s, and the second counts the distribution of their values.

By Stirling's formula, $R^{-1}\log{R \choose [R/\Delta]}\to 0$ uniformly in $R$ as $\Delta\to \infty$. Therefore, by the definition of $\Delta$, when $\tilde{\delta}$ is sufficiently small, for $1\le i \le [R/\Delta]$, we have:

$${R-i(\Delta-1)-1\choose i-1}\le {R\choose  [R/\Delta]}\le \lambda_g^{\frac{\beta}{12}R}.$$
Since $\#\mclm_p\approx \frac{\sqrt{K}}{2}$, when $R >\sqrt{\Delta K}$,

$$Q(K,p,R)\le 2^{\#\mclm_p}\lambda_g^{\frac{\beta}{12}R}\le \lambda_g^{\frac{\beta}{6}R}.$$
Otherwise, when $\varepsilon\sqrt{K}\le R \le \sqrt{\Delta K}$,

$$Q(K,p,R)\le \lambda_g^{\frac{\beta}{12}R}\sum_{i=1}^{[R/\Delta]}{\#\mclm_p \choose i }\le [R/\Delta]\lambda_g^{\frac{\beta}{12}R}{\#\mclm_p\choose  [R/\Delta]}.$$
Noting that $\#\mclm_p< R/\varepsilon$, in this case the conclusion also follows from Stirling's formula, provided that $\tilde{\delta}$ is small, and $\Delta/\varepsilon$ is large accordingly.
\end{proof}

By combining the last two claims, we have:

$$|E^1_{K,p}|\le\sum_{\mbfr\in\Brac{0,1,\dots,\sqrt{K}}^{\mclm_p}}|E^1_{K,p}(\mbfr)|\le\sum_{R\ge \frac{\varepsilon K}{2L}}\lambda_g^{-\frac{\beta}{2}R}\le (1-\lambda_g^{-\frac{\beta}{2}})^{-1}\lambda_g^{-\frac{\varepsilon\beta}{8}\sqrt{K}}.$$
It follows that, for $K$ large,

$$|E^1_K|\le \sum_{p=0}^{L-1}|E^1_{K,p}|\le L (1-\lambda_g^{-\frac{\beta}{2}})^{-1} \lambda_g^{-\frac{\varepsilon\beta}{8}\sqrt{K}}\le  \lambda_g^{-\frac{\varepsilon\beta}{9}\sqrt{K}},$$
which, together with (\ref{eqn:E_K^2}), completes the proof of the lemma.
\end{proof}

\section{Proof of the main theorem}
To prove the Main Theorem, it suffices to prove the same statements for $F$ instead of $\mscf$. Recall that, when $\alpha$ is small enough, $F$ maps $I_a\times \hI_b$ into itself. Apparently the interesting dynamics of $F$ is concentrated on the invariant set

$$\Lambda=\bigcap_{n=0}^\infty F^n(I_a\times \hI_b),$$
because for $(\theta,y)$ outside $\Lambda$, $f_n(\theta,y)\to \infty$ as $n\to\infty$.

Let us follow some terminology in \cite{ABV}. To start with, note that the base dynamics $h: I_a\to I_a$ is a $C^2$ local diffeomorphism outside a finite set $\mathcal{C}_\theta$ of singularities. Let

$$\mathcal{S}_{\theta}=\mathcal{C}_\theta\times \hI_b\quad\mbox{and}\quad \mathcal{S}_y= I_a\times \{0\}.$$
Then $F$ is a $C^2$ local diffeomorphism outside $\mathcal{S}=\mathcal{S}_\theta\cup\mathcal{S}_y$, the so called \emph{singular set} in \cite{ABV}, and the conditions  (S1)-(S3) in  \cite{ABV} hold for $\beta=1$. The following subsections are devoted to showing that $F$ is \emph{nonuniformly expanding} in the sense of \cite{ABV}.

\subsection{Positive Lyapunov exponents}

Using the same argument as in \cite{V}, it is easy to deduce vertical positive Lyapunov exponent from Proposition \ref{prop:slowrec}. We only need to consider the vertical Lyapunov exponent at the point $(\theta,y)$ where (\ref{eqn:slowrec}) holds.

Denote the $F$-orbit of $(\theta,y)$ by $\{(\theta_i,y_i)\}_{i\ge 0}$. Given $n\in\mbbn$, let $0\le\nu_1<\nu_2\dots<\nu_s\le n$ be all the times $i$ such that $\abs{f_i(\theta,y)}\le \sqrt{\alpha}$. According to Lemma \ref{lem:BE}, we have:
\begin{itemize}
  \item $\nu_{i+1}-\nu_i\ge N_\alpha$, $1\le i< s$, and in particular $n\ge (s-1)N_\alpha$;
  \item $\abs{\pd{f_{N_\alpha}}{y}(\theta_{\nu_i},y_{\nu_i})}\ge |y_{\nu_i}|\alpha^{-1+\eta}$\,,\,$1\le i< s$;
  \item $\abs{\pd{f_{\nu_{i+1}-\nu_i-N_\alpha}}{y}(\theta_{\nu_i+N_\alpha},y_{\nu_i+N_\alpha})}\ge C_*\sigma^{\nu_{i+1}-\nu_i-N_\alpha}$, $1\le i< s$;
  \item $\abs{\pd{f_{\nu_1}}{y}(\theta_0,y_0)}\ge C_*\sigma^{\nu_1}$\quad and \quad$\abs{\pd{f_{n-\nu_s}}{y}(\theta_{\nu_s},y_{\nu_s})}\ge C_*|y_{\nu_s}|\sqrt{\alpha}\sigma^{n-\nu_s}$.
\end{itemize}
For $\varepsilon>0$, let $0<\tilde{\delta}<\frac{1}{2}$ be determined in Proposition \ref{prop:slowrec}. Then by (\ref{eqn:slowrec}),  when $n$ is sufficiently large, we have:

$$\prod_{i=1}^s|y_{\nu_i}|\ge\delta^s\prod_{\substack{1\le i\le s \\|y_{\nu_i}|< \delta}}|y_{\nu_i}|\ge\tilde{\delta}^s\alpha^{(1-2\eta)s}e^{-2\varepsilon n}.$$
Therefore,

$$\abs{\pd{f_n}{y}(\theta,y)}\ge C_*^{s+1}\tilde{\delta}^s\alpha^{\frac{3}{2}-\eta(s+1)} \sigma^{n-(s-1)N_\alpha}e^{-2\varepsilon n}.$$
Since $\sigma^{N_\alpha}\le\alpha^{-1}$,  $\alpha^{-\eta(s+1)}\sigma^{n-(s-1)N_\alpha}\ge \sigma^{\eta n}$. Choose $\varepsilon=\frac{\eta}{6}\log\sigma$. Note that $\tilde{\delta}$ is determined by $\varepsilon$ and $(s-1)N_\alpha\le n$, when $\alpha$ is small enough, $\big|\pd{f_n}{y}(\theta,y)\big|\ge \sigma^{\frac{\eta}{2}n}$ for $n$ large. Then we have:

\begin{prop}\label{prop:verticalexp}
When $\alpha$ is sufficiently small, for Lebesgue almost every $(\theta,y)\in I_a\times\hI_b$,
\begin{equation}\label{eqn:verticallyaexp}
\liminf_{n\to\infty} \frac{1}{n}\log \left|\frac{\partial f_n}{\partial y}(\theta,y)\right|\ge  \frac{\eta}{2}\log\sigma>0.
\end{equation}
\end{prop}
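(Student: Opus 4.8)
The plan is to deduce the vertical expansion from the slow-recurrence estimate (\ref{eqn:slowrec}) together with the Building Expansion Lemma \ref{lem:BE}, following Viana's argument in \cite{V}. Fix $\varepsilon>0$ to be specified at the very end, and work over the full-measure set of $(\theta,y)\in I_a\times\hI_b$ for which (\ref{eqn:slowrec}) holds with $\delta=\tilde{\delta}(\varepsilon)\,\alpha^{1-2\eta}$. For such a point write $(\theta_i,y_i)=F^i(\theta,y)$, fix a large $n$, and let $0\le\nu_1<\cdots<\nu_s<n$ enumerate the times $i<n$ with $|y_i|\le\sqrt{\alpha}$; by Lemma \ref{lem:BE}(i) consecutive $\nu_i$ differ by at least $N_\alpha$, so $(s-1)N_\alpha\le n$.

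First exploit the skew-product form: since $f_{a+b}(\theta,y)=f_b(\theta_a,y_a)$, the chain rule writes $\pd{f_n}{y}(\theta,y)$ as a product of factors $\pd{f_\ell}{y}$ evaluated at the orbit point at the start of each block, where the blocks are $[0,\nu_1)$, then $[\nu_i,\nu_i+N_\alpha)$ and $[\nu_i+N_\alpha,\nu_{i+1})$ for $1\le i<s$, and finally $[\nu_s,n)$. Next estimate each block with Lemma \ref{lem:BE}: on $[0,\nu_1)$ and on each $[\nu_i+N_\alpha,\nu_{i+1})$ every intermediate iterate has $|y_k|\ge\sqrt{\alpha}$ while the terminal one has $|y_{\nu_i}|\le\sqrt{\alpha}\le\delta_*$, so part (ii) supplies a factor $C_*\sigma^{\ell}$ with $\ell$ the block length; on each $[\nu_i,\nu_i+N_\alpha)$, part (i) supplies a factor $|y_{\nu_i}|\,\alpha^{-1+\eta}$; the last block contributes at least $C_*|y_{\nu_s}|\sqrt{\alpha}\,\sigma^{\,n-\nu_s}$ (any edge effect when $n-\nu_s<N_\alpha$ costs only an $\alpha$-dependent constant and is irrelevant for $\tfrac1n\log$). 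The $\sigma$-exponents telescope to $n-(s-1)N_\alpha$. Substituting moreover the bound $\prod_{i=1}^{s}|y_{\nu_i}|\ge\delta^{\,s}e^{-2\varepsilon n}=\tilde{\delta}^{\,s}\alpha^{(1-2\eta)s}e^{-2\varepsilon n}$, obtained by splitting the product according to whether $|y_{\nu_i}|<\delta$ and invoking (\ref{eqn:slowrec}) (so $\sum_{i<n,\,|y_i|<\delta}\log|y_i|^{-1}\le 2\varepsilon n$ for $n$ large), the $\alpha$-powers collect and we reach
\[
\Big|\pd{f_n}{y}(\theta,y)\Big|\ \ge\ C_*^{\,s+1}\,\tilde{\delta}^{\,s}\,\alpha^{\frac{3}{2}-\eta(s+1)}\,\sigma^{\,n-(s-1)N_\alpha}\,e^{-2\varepsilon n}.
\]

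The final step is to remove the loss $\sigma^{-(s-1)N_\alpha}$ using the calibration $\sigma^{N_\alpha}\le\alpha^{-1}\le 4^{N_\alpha}$ from Lemma \ref{lem:BE}: since $\alpha\le\sigma^{-N_\alpha}$ and $(s-1)N_\alpha\le n$, an elementary estimate gives $\alpha^{-\eta(s+1)}\sigma^{\,n-(s-1)N_\alpha}\ge\sigma^{\eta n}$, hence $\big|\pd{f_n}{y}(\theta,y)\big|\ge C_*^{\,s+1}\tilde{\delta}^{\,s}\alpha^{3/2}\sigma^{\eta n}e^{-2\varepsilon n}$. Now choose $\varepsilon=\tfrac{\eta}{6}\log\sigma$, so that $e^{-2\varepsilon n}=\sigma^{-\eta n/3}$; since $s\le n/N_\alpha+1$ and $N_\alpha\to\infty$ as $\alpha\to0$, once $\alpha$ is small enough the factor $C_*^{\,s+1}\tilde{\delta}^{\,s}$ is bounded below by $\sigma^{-\eta n/6}$ up to a constant, so $\big|\pd{f_n}{y}(\theta,y)\big|$ exceeds a positive constant times $\alpha^{3/2}\sigma^{\eta n/2}$ for all large $n$; taking $\tfrac1n\log$ and letting $n\to\infty$ yields $\liminf_n\tfrac1n\log\big|\pd{f_n}{y}(\theta,y)\big|\ge\tfrac{\eta}{2}\log\sigma$, which is the claim.

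I expect the main obstacle to be the bookkeeping that leads to the displayed inequality and its subsequent simplification: one has to make sure that the two sources of loss --- the contraction factors $|y_{\nu_i}|$ near the critical line $y=0$, controllable only through slow recurrence, and the ``recovery delay'' of $N_\alpha$ iterates (recorded both in the factors $\alpha^{-1+\eta}$ and in the missing $\sigma^{(s-1)N_\alpha}$) --- are together beaten by the uniform expansion accumulated over the remaining iterates. The leverage is precisely $\sigma^{N_\alpha}\le\alpha^{-1}\le 4^{N_\alpha}$, which ties the recovery time to the size of $\alpha$, together with the final choice $\varepsilon\asymp\eta\log\sigma$ calibrated so as to dominate the factor $e^{-2\varepsilon n}$ that slow recurrence inevitably leaves behind.
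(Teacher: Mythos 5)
Your proposal is correct and follows essentially the same route as the paper: the same block decomposition of the orbit at the returns $\nu_i$ to $\{|y|\le\sqrt{\alpha}\}$, the same application of Lemma \ref{lem:BE}(i)--(ii) to each block, the same use of (\ref{eqn:slowrec}) to bound $\prod_i|y_{\nu_i}|$, the identical intermediate inequality and calibration $\sigma^{N_\alpha}\le\alpha^{-1}$, and the same final choice $\varepsilon=\frac{\eta}{6}\log\sigma$ with the $s$-dependent constants absorbed via $s\le n/N_\alpha+1$. No substantive differences to report.
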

Therefore, the vertical Lyapunov exponent is positive. Since the base map $h$ is uniformly expanding, we have proved that $F$ has two positive Lyapunov exponents.

\subsection{Existence of a.c.i.p.}
We shall apply the results in \cite[Theorem C]{ABV} to obtain the existence of an a.c.i.p. for ${F}$. By definition,
$$DF(\theta,y)=\left(
\begin{matrix}
h'(\theta) & 0 \\
\alpha \phi'(\theta) & Q_b'(y)
\end{matrix}
\right).$$
Recall that $\abs{h'(\theta)}\ge\lambda_g> 4>|Q_b'(y)|$ on $I_a\times\hI_b$, and hence
$$\|DF(\theta,y)^{-1}\|\le (1+C\alpha) |Q_b'(y)|^{-1},$$
where $C>0$ is a constant independent of $\alpha$.
Therefore,
$$\frac{1}{n} \sum_{i=0}^{n-1}\log \|DF(F^i(\theta,y))^{-1}\|\le C\alpha - \frac{1}{n} \log \left|\frac{\partial f_n}{\partial y} (\theta,y)\right|.$$
By (\ref{eqn:verticallyaexp}), for a.e. $(\theta,y)$ we have
$$\limsup_{n\to\infty}\frac{1}{n}\sum_{i=0}^{n-1}\log \|DF(F^i(\theta,y))^{-1}\|\le -\frac{\eta}{3}\log\sigma<0,$$
provided that $\alpha$ is small enough.
This proves Equation (5) in \cite{ABV}.

Now let us check Equation (6) in \cite{ABV}. It reads as follows: for each $\varepsilon>0$, there exists $\delta>0$ such that for a.e. $(\theta,y)$ we have

\begin{equation*}\label{eqn:slowrecvertical}
\limsup_{n\to\infty}\frac{1}{n}\sum_{\substack{0\le i<n\\ |f_i(\theta,y)|<\delta}}\log |f_i(\theta,y)|^{-1}\le \varepsilon;
\end{equation*}
\begin{equation*}\label{eqn:slowrechorizontal}
\limsup_{n\to\infty} \frac{1}{n}\sum_{\substack{0\le i<n\\ \textrm{dist}(h^i(\theta),\mathcal{S}_\theta)<\delta}} \log |\textrm{dist}(h^i(\theta),\mathcal{S}_\theta)|^{-1}\le \varepsilon.
\end{equation*}
The first inequality is simply (\ref{eqn:slowrec}). To obtain the second one, note that $h$ admits an ergodic a.c.i.p. $\mu$, which is equivalent to the Lebesgue measure on $I_a$. Then by Birkhorff's Ergodic Theorem, for Lebesgue a.e. $\theta\in I_a$, the left hand side of the second inequality is equal to $\int_{\textrm{dist}(\theta,\mathcal{S}_\theta)<\delta}\log|\textrm{dist}(\theta,\mathcal{S}_\theta)|^{-1}\dif\mu(\theta)$. Besides, according to (\ref{eqn:h-distortion}), for every measurable set $E\subset I_a$ and every $n\ge0$, $|h^{-n}(E)|\le C_d|I_a||E|$. As a result, $\frac{\dif\mu}{\dif\Leb}\le C_d|I_a|$ on $I_a$. Since $\mathcal{S}_\theta$ is a finite set, it follows that $\int_{I_a}\log|\textrm{dist}(\theta,\mathcal{S}_\theta)|^{-1} \dif\mu(\theta)<\infty$, and hence the the second inequality holds when $\delta>0$ is small enough.

We have checked that all the conditions of \cite[Theorem C]{ABV} are satisfied provided that $\alpha$ is small enough. Thus $F$ has an absolutely continuous invariant measure.

\subsection{Uniqueness of a.c.i.p.}
As shown in Lemma 6.1 of \cite{AV}, ${\Lambda}=F^n(I_a\times \hI_b)$ when $n\ge 2$. By a similar argument as in \cite[Proposition 6.2]{AV}, it is easy to prove that ${F}$ is topologically exact on $\Lambda$. Moreover, by \cite[Lemma 5.6]{ABV}, up to a set of zero Lebesgue measure, the basin of each a.c.i.p. of ${F}^n$ contains some disk. Therefore the a.c.i.p. of ${F}$ is unique.

\end{document}